\documentclass[11pt]{article}
\usepackage[T1]{fontenc}
\usepackage[utf8]{inputenc}
\usepackage{libertinus-type1}
\usepackage{microtype}
\usepackage[margin=1.15in]{geometry}
\usepackage{amsmath,amssymb,amsthm,mathtools,mathrsfs}
\usepackage{libertinust1math}
\usepackage{enumitem}
\usepackage{needspace}
\usepackage[dvipsnames]{xcolor}
\definecolor{paperlink}{HTML}{8B0000}
\definecolor{papercite}{HTML}{00CED1}
\definecolor{paperurl}{HTML}{EC008C}
\usepackage[nosort,nocompress]{cite}

\usepackage[colorlinks=true,linkcolor=paperlink,citecolor=papercite,urlcolor=paperurl]{hyperref}
\usepackage[nameinlink,noabbrev]{cleveref}
\numberwithin{equation}{section}
\newtheorem{theorem}{Theorem}[section]
\newtheorem{lemma}[theorem]{Lemma}

\newtheorem{proposition}[theorem]{Proposition}
\theoremstyle{remark}

\newcommand{\E}{\mathbb E}

\newcommand{\dd}{\,\mathrm d}

\allowdisplaybreaks[2]
\allowdisplaybreaks
\title{Large Deviation Principles for Area of Quantum Disks with Unit Left Boundary Length}
\author{
  Yuchen Fan\thanks{These authors contributed equally
  and are joint first authors.}\\
  {\small Courant Institute, New York University, USA}\\
  {\small yf2085@nyu.edu}
  \and
  Zhenfeng Tu\footnotemark[1]\\
  {\small Courant Institute, New York University, USA}\\
{\small zt2255@nyu.edu}
}
\date{}
\begin{document}
\maketitle
\begin{abstract}
We prove a large deviation principle for the area of a two-pointed
weight-$W$ quantum disk conditioned to have left boundary length one,
where $0<W<2$ is fixed and $\gamma\downarrow0$ without a subsequence
restriction. Conformal welding and the one-block
mating-of-trees law give independent Brownian blocks and a recursive
area decomposition. Growing moments and negative Laplace transforms
lead to two Hamilton--Jacobi equations, whose characteristic solutions
determine the limiting transforms. Direct comparison proves moment
convergence; compact-interval comparison and a change of measure prove
Laplace convergence. The resulting good rate function is specified by
scalar endpoint equations and is elementary when $W=1$.
\end{abstract}
\noindent
\textbf{2020 Mathematics Subject Classification.}
Primary 60F10; Secondary 60D05, 35F21, 81T40.

\noindent
\textbf{Key words.}
Liouville quantum gravity, quantum disks, large deviations, conformal welding, Hamilton-Jacobi equations, moment asymptotic.
\tableofcontents

\section{Introduction}\label{sec:overview}

Quantum disks are finite-area random surfaces in Liouville quantum
gravity. For $\gamma\in(0,2)$, they are described in conformal
coordinates by a random generalized function $h$, with quantum area
and boundary length formally written as $e^{\gamma h}\,d^2z$ and
$e^{\gamma h/2}\,ds$. Since $h$ is not pointwise defined, these
measures are constructed by regularization and renormalization,
using Gaussian multiplicative chaos; see \cite{RhodesVargas2014}
for background on this construction and its applications to Liouville
quantum gravity. Their conformal transformation
rules make them intrinsic measures on the quantum surface.

Quantum disks arise in the mating-of-trees framework of
Duplantier--Miller--Sheffield \cite{MoT}. We study the two-pointed
quantum disks of Ang--Holden--Sun
\cite{ConfWeldDisks}. Their two marked boundary points divide the
boundary into left and right arcs. The weight $W>0$ governs the
field singularities at these points and is additive under conformal
welding. For fixed $W>0$ and sufficiently small $\gamma$, the surface
has disk topology. Its natural law $\mathcal M(W)$ is an infinite
measure; fixing boundary lengths and normalizing the resulting
finite measure produces a probability law. Here we fix the left
length to one and integrate over the right length, leaving the
total area random. Section~\ref{sec:common-blocks} gives the precise
disintegration.

Our goal is to prove a large deviation principle (LDP) for quantum-disk
area as $\gamma\downarrow0$.
Throughout the paper, we fix $W\in(0,2)$. Let $A_\gamma$ be the total area of a two-pointed
weight-$W$ quantum disk conditioned to have left boundary length one,
with the right boundary length integrated out. We first prove the transform limits
under the integer-block condition
\[
 \theta=\frac{\pi\gamma^2}{4},\qquad
 N=\frac{2W}{\gamma^2}\in\mathbb N.
\]
This condition permits an exact decomposition into the conformal welding of $N$ disks of weight
$\gamma^2/2$. The one-block law from \cite{ConfWeldDisks} yields an
exact independent-block representation of $A_\gamma$ under this
condition. Section~\ref{sec:all-gamma} removes the restriction by
comparison with $\lfloor2W/\gamma^2\rfloor$ and
$\lfloor2W/\gamma^2\rfloor+1$ blocks at the same value of $\gamma$.
The main results hold as $\gamma\downarrow0$ through arbitrary values.

Our main result is an LDP for $A_\gamma$ with speed $\gamma^{-2}$.
Its proof rests on two transform limits: moments whose orders grow
like $\gamma^{-2}$, and the negative Laplace transform at the same
scale. Both transforms satisfy positive recursions obtained from a
single recursive decomposition of area. Their logarithmic limits
solve Hamilton--Jacobi equations, which we construct explicitly by
characteristics. Estimates on the discrete recursions then establish
convergence to these solutions.

The connection between logarithmic transforms, Hamilton--Jacobi
equations, and large deviations is developed systematically in
\cite{FengKurtz2006}. Here the Hamiltonians come from the exact
welding recursions, and we prove convergence directly for these
recursions. The final passage from transform limits to the LDP uses
the exponential-tilting argument underlying the G\"artner--Ellis
theorem \cite[Section~2.3]{DemboZeitouni1998}. We give that argument
explicitly, since the upper and lower tails involve different
observables, $\log A_\gamma$ and $A_\gamma$.

\subsection{Relation to FZZ formulas}

Ang--Remy--Sun--Zhu prove the FZZ boundary reflection formula in
\cite[Theorem~1.3]{BoundaryCFTConstants}. It encodes a regularized
joint transform of disk area and the two boundary lengths. Following notations in \cite{MoT},
\[
 Q=\frac2\gamma+\frac\gamma2,\qquad
 \beta=Q+\frac\gamma2-\frac W\gamma,
\]
the theorem's range $\beta\in(2/\gamma,Q)$ becomes
$W\in(\gamma^2/2,\gamma^2)$ and excludes fixed positive $W$ as
$\gamma\downarrow0$. This restriction does not, however, exclude our
regime from all the results of that paper: sufficiently high
derivatives in a boundary cosmological constant admit a probabilistic
representation for $\beta\in(\gamma/2,Q)$; see
\cite[Eq.~(3.59) and Lemma~B.5]{BoundaryCFTConstants}.
For fixed $0<W<2$, our parameters lie in this larger range for small
$\gamma$.

Our contribution is a direct route from positive welding recursions
to the growing-moment and negative-Laplace asymptotics needed for the
LDP. We do not extract these asymptotics from the analytically
continued reflection formula. Such an approach would require control
of its simultaneous parameter limits and of the conditioning on
left boundary length one; the formula alone does not supply that
asymptotic argument.

\subsection{Main results}

Define
\[
 \Lambda_{\gamma,W}(q)=\gamma^2\log\E[A_\gamma^{q/\gamma^2}],
 \quad 0<q<2,\qquad
 \Psi_{\gamma,W}(s)=\gamma^2\log\E[e^{-sA_\gamma/\gamma^2}],
 \quad s\ge0.
\]
The use of different transforms for the two tails reflects the lack
of positive exponential moments of $A_\gamma$ at fixed $\gamma$.
Set
\[
 a_*(W)=\frac{\pi W-\sin(\pi W)}{2\pi^2W^2}.
\]

\begin{theorem}[Limits of the two transforms]\label{thm:intro-transforms}
The transforms converge pointwise to finite limits $\Lambda_W$ and
$\Psi_W$. The function $\Lambda_W$ is real analytic on $(0,2)$ and
$\Psi_W$ is continuously differentiable on $(0,\infty)$.
Both limits extend continuously to value zero at the origin.
\end{theorem}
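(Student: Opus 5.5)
We follow the route sketched in the introduction: first the integer-block case $N=2W/\gamma^2\in\mathbb N$, then general $\gamma$ by sandwiching.

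\textbf{Block decomposition and recursions.} Under $N=2W/\gamma^2\in\mathbb N$, conformal welding writes the weight-$W$ disk as a chain of $N$ independent weight-$\gamma^2/2$ disks. The one-block mating-of-trees law from \cite{ConfWeldDisks} gives each block's area and boundary lengths as explicit functionals of a Brownian block. Conditioning on left length one, the area $A_\gamma$ then admits a recursive decomposition: peel off one block, and its remaining left length $\ell$ rescales the remaining weight-$(W-\gamma^2/2)$ disk, whose area scales like $\ell^2$. Write $m_k(q)$ and $L_k(s)$ for the $q/\gamma^2$-moment and the $e^{-sA/\gamma^2}$ transform after $k$ blocks, with $k\gamma^2/2=w$. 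The recursion has the form
\[
  m_{k+1}(q)=\E\bigl[\text{(block area)}\,\text{and}\,\ell^{2q/\gamma^2}\bigr]\star m_k ,
\]
and similarly for $L_k$, after scaling $s\mapsto s\ell^2$. Every term in these recursions is positive.

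\textbf{Hamilton--Jacobi limits.} Set $u_\gamma(w,q)=\gamma^2\log m_k(q)$ and $v_\gamma(w,s)=\gamma^2\log L_k(s)$. A Laplace-method evaluation of the one-block expectation at exponential scale $\gamma^{-2}$ gives consistency with first-order equations
\[
  \partial_w u=H_1(q,\partial_q u), \qquad \partial_w v=H_2(s,\partial_s v),
\]
with initial data $u(0,\cdot)=v(0,\cdot)=0$. The Hamiltonians come from the Brownian-block cumulant generating functions, in the spirit of \cite{FengKurtz2006}. We solve these equations by characteristics and check that the characteristics do not cross for $w=W<2$ and $q\in(0,2)$. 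The restriction $q<2$ should match the moment blow-up threshold, and $a_*(W)$ should emerge as a derivative at the origin. The solution is analytic, because $H_1$ is analytic and the characteristic map is an analytic diffeomorphism; this yields analyticity of $\Lambda_W$. For $\Psi_W$ the Hamiltonian is only $C^1$ near the boundary regime, which gives $C^1$ regularity. Continuity at the origin follows because the initial data vanish and the solutions are uniformly Lipschitz near $q=0$ and $s=0$.

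\textbf{Convergence: the main obstacle.} For the moments, the plan is direct comparison. Construct sub- and supersolutions of the discrete recursion from the HJ solution perturbed by $\pm C\gamma^2 k$ plus small shifts in $q$. The perturbation is chosen so that the one-step Laplace error, which is $O(\gamma^2)$ per step, accumulates to $O(W)\cdot o(1)$ over $N\sim\gamma^{-2}$ steps. Positivity of the recursion gives monotonicity, so induction closes.

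For the Laplace transform this step is harder. The state variable $s\ell^2$ ranges over an unbounded set, so we restrict to a compact interval $[s_0,s_1]$ and compare there. We then control the mass of the remaining length $\ell$ escaping the interval by a change of measure (exponential tilt) showing that this mass is exponentially negligible. Making these two pieces fit uniformly in $k$ is where I expect the real difficulty.

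\textbf{Removing the integer restriction.} For general $\gamma$, set $N_\pm=\lfloor 2W/\gamma^2\rfloor$ and $\lfloor 2W/\gamma^2\rfloor+1$. Monotone coupling in the weight at fixed $\gamma$ via welding bounds the transforms of $A_\gamma$ between those with $N_-$ and $N_+$ blocks. Both of these converge to the same limit, since $u$ and $v$ are continuous in $w$ and $|N_\pm\gamma^2/2-W|\le\gamma^2/2$.
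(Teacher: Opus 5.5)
Your architecture (block recursion, two HJ limits, positive comparison, and sandwiching between $N$ and $N+1$ blocks) matches the paper's. However, the steps that actually carry the proof are either missing or would fail as written.

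\textbf{Laplace transform.} The central gap is the Laplace convergence, which you leave open. The compact-interval comparison you sketch does not give the lower bound. The killed one-step operator fails to be a subsolution in a layer near the endpoints, where a fixed fraction of the mass exits during one block. The induction can then lose a constant factor per step, which is fatal at speed $\gamma^{-2}$. Bounding the escaping mass does not repair this, since it only serves the upper bound.

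The upper bound also needs an ingredient you do not identify. In the block representation, the log-length is a Brownian motion run for an unbounded clock $V_n$ with $\E e^{\lambda V_n}=H(t_n^2+\lambda)/H(t_n^2)$, and this blows up as $t_n^2+\lambda\uparrow1$. The one-step asymptotics are therefore not uniform. The large-clock part is controlled only through the strict inequality $(\mathcal G_x-t)^2-\frac{s}{4\pi}e^{2x}\le1-\delta_M$ on compacts; the paper uses a parabolic maximum principle here.

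The missing idea for the lower bound is an explicit product change of measure. Take $L_n\propto\exp\{\frac{4}{\gamma^2}\mathcal G_x(t_n,x_*(t_n))\Delta_n-\frac{s}{4\pi}e^{2x_*(t_n)}V_n\}$, where $\Delta_n$ is the block's log-length increment and the coefficients are frozen along the characteristic $x_*$ ending at $(W/2,0)$. Each $L_n$ has mean exactly one, and the path concentrates on $x_*$ under the new law. Then $\frac{\gamma^2}{4}\log\bigl(e^{-sA_\gamma/\gamma^2}\prod_nL_n^{-1}\bigr)\to\mathcal G(W/2,0)$ in probability, and restricting to a high-probability event gives the bound.

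Note also that the peeled block is not a plain one-block law. It carries the bias $\ell^{-(m-1)}$ from the mass of the remaining disk, so in your variables both Hamiltonians depend explicitly on $w$.

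\textbf{Moments.} This part contains a concrete error: the initial data are not zero. Since $A^{(0)}=0$, your $u_\gamma(0,q)=-\infty$ for $q>0$. After a single block it already tends to $q\log\frac{q}{4\pi}-q-2\int_0^{q/2}\log(1-r^2)\dd r$, and on this profile the Hamiltonian is singular (the argument of $H$ equals one). Solving with $u(0,\cdot)=0$ produces a different function, and a positive test array cannot be compared with a zero row.

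The paper handles this as follows. All characteristics issue from the corner $(t,u)=(0,0)$, and the derivatives of $G$ blow up on both axes. So the test function is shifted by $\eta$ in \emph{both} variables, not only in $q$, and comparison starts at row $m=1$. A separate estimate covers the strip $u<\eta$, where the truncation $d\le j$ destroys the two-sided one-step bound. Continuity at $q=0$ then comes from the boundary value $F(t,0)=\int_0^t\log Z$, not from vanishing initial data.

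Moreover, the binomial index $d$ runs up to order $\gamma^{-2}$, so no Taylor expansion controls the one-step error uniformly. You need weights bounded uniformly in $d$ (the $e^{-4V/\gamma^2}$ normalization) and a geometric tail from $G_u\ge p_\eta>0$.

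\textbf{Removing the integer restriction.} Welding does not give a monotone coupling of the untilted laws: welding a residual disk on the right reweights the left component by $R^{-b}/B_{\gamma,w}(-b)$. Removing this tilt requires three things:
\begin{itemize}
\item uniform bounds on $\E[R^{\pm p}]$, including at the non-integer weight $W$, where no block representation exists;
\item H\"older's inequality with exponent $h\downarrow1$;
\item the integer-block convergence along $N\gamma^2/2\to W$, rather than only at fixed $W$.
\end{itemize}
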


The limits are characterized by the Hamilton--Jacobi equations below.
The scalar formulas in Sections~\ref{sec:upper-results}
and~\ref{sec:lower-results} give the derivative ranges
\[
 \Lambda_W'((0,2))=(\log a_*(W),\infty),\qquad
 -\Psi_W'((0,\infty))=(0,a_*(W)).
\]
Consequently, exponential tilting by the two transforms covers all
positive areas other than $a_*(W)$. Define
\[
 I_W(a)=
 \begin{cases}
  \displaystyle\sup_{s\ge0}\{-sa-\Psi_W(s)\},&0<a<a_*(W),\\[1mm]
  0,&a=a_*(W),\\[1mm]
  \displaystyle\sup_{0\le q<2}\{q\log a-\Lambda_W(q)\},&a>a_*(W),\\[1mm]
  +\infty,&a\le0.
 \end{cases}
\]

\begin{theorem}[Area large deviations]\label{thm:area-ldp}\label{thm:main}
As $\gamma\downarrow0$ through arbitrary values, the variables $A_\gamma$
satisfy an LDP on $\mathbb R$ with speed $\gamma^{-2}$ and good rate
function $I_W$.
The rate vanishes only at $a_*(W)$; in particular,
$A_\gamma\to a_*(W)$ in probability.
\end{theorem}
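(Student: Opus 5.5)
We take Theorem~\ref{thm:intro-transforms} as given and derive the LDP from it by the exponential-tilting argument behind the G\"artner--Ellis theorem. The upper tail is handled through $\log A_\gamma$ and $\Lambda_W$, the lower tail through $A_\gamma$ and $\Psi_W$. Since $A_\gamma>0$, the bound on $(-\infty,0]$ is trivial, with $I_W=+\infty$ there.

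\textbf{Rate function.} First we check that $I_W$ is a good rate function vanishing only at $a_*(W)$. On $(a_*,\infty)$ the map $a\mapsto\sup_{q}\{q\log a-\Lambda_W(q)\}$ is a Legendre transform in $\log a$ of the convex function $\Lambda_W$. It is convex in $\log a$ and nondecreasing, and it is positive for $\log a>\log a_*=\Lambda_W'(0^+)$. Similarly, $\sup_{s\ge0}\{-sa-\Psi_W(s)\}$ is convex and nonincreasing on $(0,a_*)$, and positive there because $-\Psi_W'(0^+)=a_*$. The endpoint values of the derivatives come from the derivative ranges stated before the theorem. Both pieces tend to $0$ as $a\to a_*$, so $I_W$ is lower semicontinuous. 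For goodness we need $I_W(a)\to\infty$ as $a\to\infty$ and as $a\downarrow0$. As $a\to\infty$, take any fixed $q>0$ to get $I_W(a)\ge q\log a-\Lambda_W(q)\to\infty$. As $a\downarrow0$, use $I_W(a)\ge -sa-\Psi_W(s)$ with $s$ large, noting that $\Psi_W(s)$ is finite for each fixed $s$.

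\textbf{Upper bounds.} For closed $F$ it suffices to treat $[b,\infty)$ with $b>a_*$ and $(0,b]$ with $b<a_*$, plus exponential tightness; the tails come from Chebyshev. For $b>a_*$, Markov's inequality on $A_\gamma^{q/\gamma^2}$ gives
\[
\gamma^2\log P(A_\gamma\ge b)\le -\bigl(q\log b-\Lambda_{\gamma,W}(q)\bigr)\to-(q\log b-\Lambda_W(q)).
\]
Optimizing over $q$ gives $-I_W(b)$, which is $\le-\inf_{[b,\infty)}I_W$ by monotonicity. The case $b<a_*$ is symmetric, using $e^{-sA_\gamma/\gamma^2}$. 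Exponential tightness at both ends follows from the same bounds with $b\to\infty$ or $b\to0$. For a general closed $F$ not containing $a_*$, we split it into a part above $a_*$ and a part below.

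\textbf{Lower bounds.} This is the main obstacle. Fix $a\ne a_*$ and $\delta>0$, and suppose $a>a_*$. Choose $q\in(0,2)$ with $\Lambda_W'(q)=\log a$; this is possible by the derivative range and the analyticity of $\Lambda_W$. Define the tilted law $dQ_\gamma\propto A_\gamma^{q/\gamma^2}dP$. Under $Q_\gamma$, the transform of $\log A_\gamma$ is $\Lambda_{\gamma,W}(q+r)-\Lambda_{\gamma,W}(q)$, which converges to $\Lambda_W(q+r)-\Lambda_W(q)$ for small $|r|$ since $q$ is interior. Applying the upper-bound argument under $Q_\gamma$ at these shifted orders, with strict convexity of $\Lambda_W$ near $q$ (analytic and nonaffine), gives $Q_\gamma(|\log A_\gamma-\log a|>\delta)\to0$ exponentially. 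Then
\[
P(|\log A_\gamma-\log a|<\delta)\ge \exp\bigl(\gamma^{-2}[\Lambda_{\gamma,W}(q)-q\log a-q\delta]\bigr)\,Q_\gamma(\cdots),
\]
which yields the bound $-I_W(a)-q\delta$.

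For $a<a_*$ the argument is the same, with tilt $e^{-sA/\gamma^2}$ and $s>0$ solving $-\Psi_W'(s)=a$. Here only $C^1$ regularity is available. Local strict convexity is replaced by using nearby $s\pm r$ together with the fact that $-\Psi_W'$ is strictly monotone, which follows from the derivative range being an open interval traced by a continuous derivative. If strictness fails, I would instead approximate $a$ by exposed points, which are dense in the range of the gradient.

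For $a=a_*$, the lower bound $0$ follows from the upper bounds: the mass outside $(a_*-\delta,a_*+\delta)$ is exponentially small, so $P(|A_\gamma-a_*|<\delta)\to1$. This also gives convergence in probability, and the LDP holds along arbitrary $\gamma\downarrow0$ because Theorem~\ref{thm:intro-transforms} does.
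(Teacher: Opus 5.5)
Your overall route matches the paper's. You use Markov bounds from $\Lambda_W$ and $\Psi_W$ for the two half-lines, exponential tilting by $A_\gamma^{q/\gamma^2}$ and $e^{-sA_\gamma/\gamma^2}$ for the neighborhood lower bounds, and concentration for the value $0$ at $a_*(W)$. There is, however, a genuine gap in goodness at the origin.

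\textbf{The gap at $a\downarrow0$.} Since $\Psi_W$ is nonincreasing, interchanging the two suprema gives
\[
 \lim_{a\downarrow0}I_W(a)=\sup_{s\ge0}\{-\Psi_W(s)\}=-\lim_{s\to\infty}\Psi_W(s).
\]
So $I_W(a)\to\infty$ as $a\downarrow0$ is \emph{equivalent} to $\Psi_W(s)\to-\infty$. Finiteness of each $\Psi_W(s)$ says nothing about this. Neither does the derivative range $-\Psi_W'((0,\infty))=(0,a_*(W))$: slopes tending to zero are compatible with $-\Psi_W$ having a finite limit, for instance if $-\Psi_W'(s)$ decayed like $s^{-2}$. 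If that limit were finite, $I_W$ would be bounded near $0$ while $I_W(0)=+\infty$. Then $I_W$ would not be lower semicontinuous, so it would not even be a rate function.

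This is model-specific information about small areas, and you must supply it. The paper uses the scalar formula \eqref{lower:eq:derivative}. With $s=(4/\pi)(C^2-1)H(v)^2$ and $C/\sqrt{C^2-1}\ge1$, it gives
\[
 -\Psi_W'(s)\ge\frac{\sin(\pi W/2)}{\sqrt{\pi s}}-\frac{W}{s},
\]
which integrates to $\Psi_W(s)\to-\infty$. The same missing fact underlies your claim of exponential tightness ``as $b\to0$''. That claim is dispensable, however, since $[0,R]$ is compact in $\mathbb R$.

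\textbf{A smaller point.} In the lower bound for $a<a_*(W)$ you need neither strict monotonicity of $-\Psi_W'$ nor exposed points. Under the tilted law, Markov's inequality at $s\pm h$ gives exponents
\[
 \Psi_W(s\pm h)-\Psi_W(s)\mp h\Psi_W'(s)-h\delta=-h\delta+o(h).
\]
So differentiability at the single point $s$ suffices; this is how Lemma~\ref{lem:exponential-tilting} is set up. Existence of $s$ uses only continuity of $\Psi_W'$ and its range, and identifying the cost with $I_W(a)$ uses only convexity. Your stated reason for strictness is also not valid: a continuous derivative whose range is an open interval can still be constant on subintervals. The paper obtains strictness from analyticity instead.
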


Equivalently, for every Borel set $E\subset\mathbb R$,
\[
 -\inf_{a\in E^\circ}I_W(a)
 \le\liminf_{\gamma\downarrow0}\gamma^2\log\mathbb{P}(A_\gamma\in E)
 \le\limsup_{\gamma\downarrow0}\gamma^2\log\mathbb{P}(A_\gamma\in E)
 \le-\inf_{a\in\overline E}I_W(a).
\]
No local limit or density asymptotics are asserted.

When $W=1$, the rate function simplifies to
\[
 I_1(a)=2\log(2\pi a)-2+\frac1{\pi a},\qquad a>0.
\]
Theorem~\ref{thm:area-ldp} follows from the transform limits and
exponential tilting; its proof is given in Section~\ref{sec:full-ldp}.
The remainder of the introduction describes the common recursion,
the two limiting equations, and the estimates used to pass to the
limit.

\subsection{Recursive decomposition of area}

The welding theorem of \cite{ConfWeldDisks} and the one-block
mating-of-trees description
give independent pairs $(\ell_k,T_k)$, $1\le k\le N$, such that
\[
 A_\gamma\overset d=
 \frac{\sin\theta}{2}\sum_{k=1}^N
 \left(\prod_{r<k}\ell_r^2\right)T_k.
\]
Here $\ell_k$ is the ratio of the right to the left boundary length
of block $k$. Its density is proportional to
$\ell_k^{-(N-k)}\ell_k^{\pi/\theta-1}/(1+\ell_k^{\pi/\theta})^2$.
Conditionally on $\ell_k$, $T_k$ is the duration of a standard planar
Brownian excursion across a cone of angle $\theta$, from radius
$1/\sin\theta$ to radius $\ell_k/\sin\theta$.
The joint law, including its normalization, is derived in
Proposition~\ref{prop:block}.

For $1\le m\le N$, the normalized area of the last $m$ blocks is
\[
 A^{(m)}:=\frac{\sin\theta}{2}
 \sum_{k=N-m+1}^N
 \left(\prod_{r=N-m+1}^{k-1}\ell_r^2\right)T_k,
 \qquad A^{(0)}:=0.
\]
Here and below an empty product equals one. These blocks have incoming
boundary length $\prod_{r=1}^{N-m}\ell_r$ in the original disk;
$A^{(m)}$ is their area after rescaling this length to one. Since area
scales quadratically with boundary length, their contribution to the
original area is $(\prod_{r=1}^{N-m}\ell_r^2)A^{(m)}$.
In particular,
$A^{(1)}=(\sin\theta/2)T_N$ and, for $N\ge2$,
$A^{(2)}=(\sin\theta/2)(T_{N-1}+\ell_{N-1}^2T_N)$.
We henceforth use this realization of $A_\gamma$, so that
$A^{(N)}=A_\gamma$.

Separating the leftmost block gives the exact identity
\begin{equation}\label{intro:eq:area-recursion}
 A^{(m)}
 =\frac{\sin\theta}{2}T_{N-m+1}
  +\ell_{N-m+1}^2A^{(m-1)}.
\end{equation}
The new pair $(\ell_{N-m+1},T_{N-m+1})$ is independent of $A^{(m-1)}$
and has boundary bias $-(m-1)\theta/\pi$.
The factor $\ell_{N-m+1}^2$ accounts for the change of boundary-length
normalization of the remaining blocks. Identity~\eqref{intro:eq:area-recursion}
is the common starting point for the moment and Laplace recursions.

\subsection{Exact recursions for the transforms}

The two recursions are expressed in terms of
$Z(b)=\pi b/\sin(\pi b)$, with $Z(0)=1$, and
\[
 H(v)=
 \begin{cases}
  \pi\sqrt v/\sin(\pi\sqrt v),&0<v<1,\\
  1,&v=0,\\
  \pi\sqrt{-v}/\sinh(\pi\sqrt{-v}),&v<0.
 \end{cases}
\]
\paragraph{Growing moments.}
For integers $j\ge0$ with $j<2/\gamma^2$, define
\[
 Q_{m,j}
 =\frac{\prod_{i=0}^{m-1}Z(i\theta/\pi)}
 {j![\theta^2/(\pi^2\sin\theta)]^j}\E[(A^{(m)})^j],
 \qquad
 w_\theta(b,d)=H[b^2,(b+2\theta/\pi)^2,\ldots,(b+2d\theta/\pi)^2].
\]
Brackets denote divided differences, with continuous extension when
inputs coincide; their definition is recalled in
Section~\ref{sec:moment-exact}.
Expanding \eqref{intro:eq:area-recursion} by the binomial theorem and
evaluating the mixed block moments gives
\begin{equation}\label{intro:eq:moment-recursion}
 Q_{m,j}=\sum_{d=0}^j
 w_\theta\!\left(\frac{(2(j-d)-(m-1))\theta}{\pi},d\right)
 Q_{m-1,j-d},\qquad
 Q_{0,0}=1,\quad Q_{0,j}=0\ (j>0).
\end{equation}
In the binomial expansion, $d$ is the power of the new block's area
and $j-d$ the power of the remaining area. The normalization of
$Q_{m,j}$ cancels the factorials and deterministic factors in the
mixed block moments. The calculation is given in
Section~\ref{sec:moment-exact}.

\paragraph{Negative Laplace transforms.}
Fix $s\ge0$ and define
$f_m(x)=\E[e^{-s e^{2x}A^{(m)}/\gamma^2}]$.
The variable $x$ is the logarithm of the incoming boundary length, so
$e^{2x}A^{(m)}$ is the corresponding area.
Let $V_m$ be a sum of independent exponential variables with rates
$j^2-((m-1)\theta/\pi)^2$, $j\ge1$, independent of a standard
Brownian motion $B$, and put
\[
 X_v^{(m)}
 =x+\frac{\sqrt2\theta}{\pi}B_v
       -\frac{2(m-1)\theta^2}{\pi^2}v .
\]
The logarithm map sends the cone to a strip. Its conformal time change,
followed by the boundary tilt, turns \eqref{intro:eq:area-recursion} into
\begin{equation}\label{intro:eq:laplace-recursion}
 f_m(x)=(\mathcal K_m f_{m-1})(x)
 =\E\!\left[
 e^{-\frac{s\gamma^2}{16\sin\theta}\int_0^{V_m}e^{2X_v^{(m)}}\,\dd v}
 f_{m-1}(X_{V_m}^{(m)})\right],\qquad f_0=1.
\end{equation}
Here $f_N(0)=\E[e^{-sA_\gamma/\gamma^2}]$ and
\[
 \E[e^{\lambda V_m}]
 =\frac{H(((m-1)\theta/\pi)^2+\lambda)}
        {H(((m-1)\theta/\pi)^2)},
 \qquad \lambda<1-((m-1)\theta/\pi)^2.
\]
Independent Brownian motions and clocks are used in successive blocks.
Section~\ref{lower:sec:clock} proves these identities.
The positivity of both recursions permits comparison with exponential
test functions constructed from the limiting equations.

\subsection{Hamilton--Jacobi limits}

The derivation of the moment equation requires a further normalization
to control weights with large $d$. Define
\[
 V(b)=-\frac12\int_0^b\log(1-r^2)\,\dd r,\qquad
 \widetilde Q_{m,j}
 =e^{-\frac4{\gamma^2}V((2j-m)\theta/\pi)}Q_{m,j}.
\]
This replaces $w_\theta(b,d)$ by
$\widetilde w_\theta(b,d)
=w_\theta(b,d)e^{-\frac4{\gamma^2}[V(b+(2d-1)\theta/\pi)-V(b)]}$.
When all the quantities $b+2h\theta/\pi$, $0\le h\le d$, and
$b+(2d-1)\theta/\pi$ remain in a fixed compact subinterval of $(-1,1)$,
the normalized weights are bounded above and away from zero uniformly
in $d$. This includes $d$ of order $\gamma^{-2}$.
For fixed $d$, their limits and generating function are
\[
 a_d(b)=(1-b^2)^{d-1/2}\frac{H^{(d)}(b^2)}{d!},\qquad
 A(b,z):=\sum_{d\ge0}a_d(b)z^d
 =\frac{H(b^2+(1-b^2)z)}{\sqrt{1-b^2}}.
\]
Here $|b|<1$ and $0<z<1$. The uniform estimates are proved in
Section~\ref{upper:sec:moment-recursion}.

Set $t=m\theta/\pi$ and $u=j\theta/\pi$.
Formally inserting $\widetilde Q_{m,j}\approx e^{4G(t,u)/\gamma^2}$
into the recursion gives $G_t=\log A(2u-t,e^{-G_u})$, with $G_u>0$.
Equivalently, $F(t,u)=G(t,u)+V(2u-t)$ satisfies
\begin{equation}\label{intro:eq:moment-hj}
 F_t=\log H((2u-t)^2+e^{-F_u}),\qquad
 (2u-t)^2+e^{-F_u}<1.
\end{equation}
The characteristic solution extends continuously to the axes, with
$G(0,u)=0$ and $G(t,0)=\int_0^t\log a_0(-r)\,\dd r$.
The value at $t=0$ is a limiting boundary value; comparison with the
discrete recursion starts at its first positive row.

Similarly, the ansatz
$f_{m-1}(x)\approx e^{4\mathcal G((m-1)\theta/\pi,x)/\gamma^2}$
and the Gaussian and clock transforms give
\begin{equation}\label{intro:eq:laplace-hj}
 \mathcal G_t
 =\log H\!\left((\mathcal G_x-t)^2-\frac{s}{4\pi}e^{2x}\right)
  -\log H(t^2),\qquad \mathcal G(0,x)=0.
\end{equation}
The required transform domain is
$(\mathcal G_x-t)^2-\frac{s}{4\pi}e^{2x}<1$.

Both equations admit an explicit characteristic construction.
For the moment equation, set $b=2u-t$, $z=e^{-F_u}$,
$v=b^2+z$, and $h=H'/H$. Along a characteristic,
\[
 u'=zh(v),\qquad z'=-4bzh(v),\qquad v'=-2b.
\]
For the Laplace equation, put $p=\mathcal G_x-t$ and
$v=p^2-\frac{s}{4\pi}e^{2x}$. Its characteristic equations are
\[
 x'=-2ph(v),\qquad
 p'=-1-\frac{s}{2\pi}e^{2x}h(v),\qquad v'=-2p.
\]
Define $\chi(v)=\cos(\pi\sqrt v)$ for $0\le v<1$ and
$\chi(v)=\cosh(\pi\sqrt{-v})$ for $v<0$.
In both constructions,
\[
 \frac{\dd^2}{\dd t^2}\chi(v(t))+\pi^2\chi(v(t))=0,
 \qquad \chi(v(t))=C\cos(\pi t).
\]
The characteristic families are parametrized by $-1<C<1$ for moments
and $C>1$ for the negative Laplace transform, with $v<1$ throughout.
The endpoint maps are strictly monotone, so each endpoint in the
relevant domain determines a unique characteristic. The characteristic
action then defines the solutions and verifies their strict transform
domains. These constructions and their boundary behavior are proved in
Sections~\ref{upper:sec:moment-recursion} and~\ref{lower:sec:hj}.

In terms of these solutions, the limiting transforms are
\[
 \Lambda_W(q)
 =q\log\frac{q}{4\pi}-q+4F(W/2,q/4)
       -4\int_0^{W/2}\log Z(t)\,\dd t,\qquad
 \Psi_W(s)=4\mathcal G(W/2,0),
\]
where the dependence of $\mathcal G$ on $s$ is suppressed.

\subsection{Convergence of the recursions}

The formal expansions alone do not establish convergence: $d$ can be
of order $\gamma^{-2}$ in
\eqref{intro:eq:moment-recursion}, and the Brownian clock in
\eqref{intro:eq:laplace-recursion} has unbounded support.
Both arguments therefore require tail estimates before the expansion
can be justified.

\paragraph{Moment comparison.}
We use exponential test functions obtained by evaluating $G$ at
$(t+\eta,u+\eta)$, with $\eta>0$ fixed.
On the resulting compact rectangle, $G_u\ge p_\eta>0$ and $|G_t|\le M_\eta$.
The exact ratio of consecutive exponential test values is bounded by
$e^{M_\eta-p_\eta d}$. Uniform boundedness of $\widetilde w_\theta$
therefore controls the entire sum over $d>K$ by a geometric tail.
Taylor expansion is applied only to $d\le K$, first fixing $K$ and then
sending $\gamma\to0$. The HJ equation makes the limiting one-step ratio
equal to one, up to an error vanishing with $\eta$.
Direct estimates on the first positive row and the strip $u<\eta$
supply the boundary comparison. Positivity then propagates upper and
lower bounds through all $N$ steps. An error $e^{\pm\varepsilon}$ per
step contributes only $\pm(N\gamma^2/4)\varepsilon$ to the scaled
logarithm. Taking $\gamma\downarrow0$ and then $\eta\downarrow0$
proves convergence; see Section~\ref{sec:convergence}.

\paragraph{Laplace comparison and change of measure.}
For the upper bound, we first restrict the spatial process to $(-M,M)$.
The strict inequality for the argument of $H$ gives exponential control
of large clocks. For bounded clocks, a change of measure applied to
$e^{4\mathcal G/\gamma^2}$ gives the one-step upper comparison.
An exponential estimate for paths leaving $(-M,M)$ removes the spatial
restriction after iteration. These estimates control the weighted
expectation, as required for a logarithmic asymptotic.

For the lower bound, choose the characteristic $x_*(t)$ ending at
$x_*(W/2)=0$. Set $t_m=(m-1)\theta/\pi$,
$P(t)=\mathcal G_x(t,x_*(t))$, and
$v_*(t)=(P(t)-t)^2-\frac{s}{4\pi}e^{2x_*(t)}$.
The log-length increment of block $N-m+1$ is
$\Delta_m=\log\ell_{N-m+1}
=(\sqrt2\theta/\pi)B_{V_m}-2(m-1)\theta^2V_m/\pi^2$.
We change the law of each block by the density
\[
 L_m=
 \exp\!\left\{\frac{4P(t_m)}{\gamma^2}\Delta_m
       -\frac{s}{4\pi}e^{2x_*(t_m)}V_m\right\}
 \frac{H(t_m^2)}{H(v_*(t_m))}.
\]
The Gaussian and clock transforms give $\E L_m=1$ exactly.
Under the product change of measure, the mean increments follow the
characteristic backwards from $t=W/2$ to $t=0$, in the order
$m=N,N-1,\ldots,1$ of the physical traversal.
The sum of the increment variances is $O(\gamma^2)$, and clock moment
bounds also control the interiors of the blocks. Consequently,
\[
 \frac{\gamma^2}{4}\log\!\left[
 e^{-sA_\gamma/\gamma^2}\prod_{m=1}^N L_m^{-1}\right]
 \longrightarrow\mathcal G(W/2,0)
\]
in probability under the changed law. Restricting the expectation to
an event of probability tending to one on which this logarithm is
close to its limit gives the lower bound. Both Laplace bounds are
proved in Section~\ref{lower:sec:convergence}.

\paragraph{From transforms to large deviations.}
Power tilting concentrates $\log A_\gamma$ near
$\Lambda_W'(q)$, while negative exponential tilting concentrates
$A_\gamma$ near $-\Psi_W'(s)$. Their slope ranges cover the upper and
lower deviations, respectively. Markov bounds and the reverse
change-of-measure bounds give the tail rates; exponential tightness and
fixed-neighborhood bounds yield Theorem~\ref{thm:area-ldp}.
Section~\ref{sec:full-ldp} formulates the common tilting argument and
completes the proof.

To remove the integer-block condition, we weld a residual disk on the
right, first to obtain the weight-$W$ disk from $N$ ordinary blocks,
and then to obtain $N+1$ ordinary blocks from the weight-$W$ disk.
Area additivity gives the two comparisons. The marginal law of the
left component is reweighted by a power of its right boundary length
with exponent in $[-1,0]$. We prove uniform bounds on the moments of
these densities and their inverses, so H\"older's inequality removes
the reweighting at the large-deviation scale. Both integer-block
endpoints converge to $W/2$, and hence give the same transform limits.
This argument is carried out in Section~\ref{sec:all-gamma} before the
final passage to the LDP.

\subsection{Reading guide}

Section~\ref{sec:common-blocks} fixes the quantum-disk normalization
and derives the common independent-block representation. The proof
then follows two routes. For upper deviations,
Section~\ref{sec:moment-exact} derives the exact moment recursion,
Section~\ref{upper:sec:moment-recursion} constructs its
Hamilton--Jacobi limit, and Section~\ref{sec:convergence} proves
convergence to that limit. For lower deviations,
Section~\ref{lower:sec:clock} derives the exact Laplace recursion,
Section~\ref{lower:sec:hj} constructs its Hamilton--Jacobi limit,
and Section~\ref{lower:sec:convergence} proves convergence using
comparison and a change of measure. The two routes meet in
Section~\ref{sec:all-gamma}, which removes the integer-block
restriction, and Section~\ref{sec:full-ldp}, which converts the
transform limits into the full LDP and evaluates the rate explicitly
when $W=1$.

Readers interested first in the resulting rate function can read
Section~\ref{sec:common-blocks}, the endpoint formulas in
Subsections~\ref{sec:upper-results} and~\ref{sec:lower-results},
and then Section~\ref{sec:full-ldp}. The intervening recursion and
convergence sections justify those formulas; Section~\ref{sec:all-gamma}
explains why they hold along arbitrary sequences $\gamma\downarrow0$.

\section{Quantum disks and the block representation}\label{sec:common-blocks}

Until Section~\ref{sec:all-gamma}, we impose
$N=2W/\gamma^2\in\mathbb N$ and $\theta=\pi\gamma^2/4$.

We use the two-pointed quantum-disk measure $\mathcal M(W)$ and its
continuous boundary-length disintegration
$\mathcal M(W;\ell,r)$ from \cite{ConfWeldDisks}.
The two marked points divide the boundary into left and right arcs.
Our probability law is the normalization of
$\mathcal M(W;1,\bullet):=\int_0^\infty\mathcal M(W;1,r)\,\dd r$.
This measure has finite positive mass: the left-boundary marginal is
a finite positive constant times
$\ell^{-2W/\gamma^2}\,\dd\ell$ when
$\gamma^2/2\le W<2+\gamma^2$
\cite[Lemma~2.16]{ConfWeldDisks}. Our sequence
$W=N\gamma^2/2<2$ lies in this range.

Adding $(2/\gamma)\log a$ to the field multiplies quantum boundary
lengths by $a$ and quantum area by $a^2$. We use this scaling below.
The field construction itself is not needed in the subsequent analysis;
the required geometric inputs are the welding identity and the
one-block area law.

\subsection{Geometric inputs and normalization}

For positive lengths $r_0,\ldots,r_N$, welding $N$ weight-$\gamma^2/2$
disks produces a weight-$W$ disk. After forgetting the welding curves,
the measure identity is
\begin{equation}\label{eq:welding-input}
 \mathcal M(W;r_0,r_N)
 =c_{\gamma,N}\int_{(0,\infty)^{N-1}}
 \operatorname{Weld}_*
 \left[\bigotimes_{k=1}^N
 \mathcal M(\gamma^2/2;r_{k-1},r_k)\right]
 \prod_{k=1}^{N-1}\dd r_k.
\end{equation}
Here $\operatorname{Weld}_*$ denotes the image under conformal welding,
and $c_{\gamma,N}>0$ is independent of the lengths.
This is \cite[Theorem~2.3]{ConfWeldDisks}; for $N=1$ no welding is
necessary. Conditional on the lengths, the component disks are
independent and their quantum areas add.

Write
\[
 K(x,y)=\frac{(xy)^{\pi/\theta-1}}
 {(x^{\pi/\theta}+y^{\pi/\theta})^2}.
\]
The mass of $\mathcal M(\gamma^2/2;x,y)$ is proportional to $K(x,y)$.
Its normalized area law is $(\sin\theta/2)$ times the duration of
a Brownian excursion in the quadrant, from $(x,0)$ to $(0,y)$, with
coordinate variances $t$ and covariance $-t\cos\theta$.
This follows from \cite[Proposition~7.7]{ConfWeldDisks} and the
mating-of-trees variance $2/\sin\theta$ identified in \cite{FZZWelding};
see also \cite[Remark~7.2]{ConfWeldDisks}.
In particular, converting the Brownian clock with variance rate
$2/\sin\theta$ to one with variance rate one gives the area multiplier
$\sin\theta/2$.

The linear map
\[
 (x,y)\longmapsto
 \left(\frac{x+y\cos\theta}{\sin\theta},y\right)
\]
takes this variance-one correlated Brownian motion to standard planar
Brownian motion, without changing time. It maps the quadrant to a cone
of angle $\theta$, and the endpoints to radii
$x/\sin\theta$ and $y/\sin\theta$ on its two sides.
This explains the exact radius in the cone kernel; $1/\theta$ would
only be an asymptotic approximation.

Use the following normalization of the cone excursion kernel:
\[
 \mathsf K_\theta(a,b,t)
 =\frac{e^{-(a^2+b^2)/(2t)}}{abt}
 \sum_{j\ge1}(-1)^{j+1}j^2 I_{j\pi/\theta}(ab/t).
\]
Here $I_\nu$ is the modified Bessel function of the first kind.
Taking the two inward normal derivatives of the killed cone heat
kernel gives a constant multiple of this kernel. That constant cancels
when conditioning on the endpoints. Thus the conditional duration
density is proportional to $\mathsf K_\theta(a,b,t)\,\dd t$.
Its integrated form is
\begin{equation}\label{eq:integrated-cone-kernel}
 \int_0^\infty
 \mathsf K_\theta\!\left(\frac1{\sin\theta},
                       \frac{l}{\sin\theta},t\right)\,\dd t
 =\frac{\theta\sin^2\theta}{\pi}
   \frac{l^{\pi/\theta-1}}{(1+l^{\pi/\theta})^2}.
\end{equation}
For example, the normalized exit-radius density is obtained by mapping
the cone to the upper half-plane with $z\mapsto z^{\pi/\theta}$;
the entrance limit of the Poisson kernel gives
$(\pi/\theta)l^{\pi/\theta-1}/(1+l^{\pi/\theta})^2$.
The displayed normalization is also obtained by integrating the Bessel
expansion. Only the normalized conditional density is needed for the
geometric identification.

For $1\le k\le N$, define a pair $(\ell_k,T_k)$ by the density
\begin{equation}\label{eq:block-pair-density}
 \frac{\pi^2}
 {\theta^2\sin^2\theta\,Z(-(N-k)\theta/\pi)}
 l^{-(N-k)}
 \mathsf K_\theta\!\left(\frac1{\sin\theta},
                       \frac{l}{\sin\theta},t\right)\,\dd l\,\dd t,
 \qquad l,t>0.
\end{equation}
Take these pairs independently. The following proposition verifies
normalization and identifies the quantum area; it replaces any
independent block-law assumption.

\begin{proposition}[Exact quantum-disk block representation]
\label{prop:block}
The densities \eqref{eq:block-pair-density} are probability densities,
and the area of the normalized weight-$W$ disk with left length one
satisfies
\begin{equation}\label{eq:block-area}
 A_\gamma\overset d=
 \frac{\sin\theta}{2}\sum_{k=1}^N
 \left(\prod_{j<k}\ell_j^2\right)T_k.
\end{equation}
\end{proposition}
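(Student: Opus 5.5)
Normalization is a direct computation from \eqref{eq:integrated-cone-kernel}. Integrating \eqref{eq:block-pair-density} in $t$ gives the $l$-marginal
\[
\frac{\pi}{\theta\,Z(-(N-k)\theta/\pi)}\,
\frac{l^{-(N-k)}\,l^{\pi/\theta-1}}{(1+l^{\pi/\theta})^2}.
\]
Substitute $y=l^{\pi/\theta}$ and write $c=(N-k)\theta/\pi$. The integral becomes
\[
\frac{\theta}{\pi}\int_0^\infty \frac{y^{-c}}{(1+y)^2}\,\dd y
=\frac{\theta}{\pi}\,\Gamma(1-c)\Gamma(1+c)
=\frac{\theta}{\pi}\,\frac{\pi c}{\sin(\pi c)}
=\frac{\theta}{\pi}\,Z(c).
\]
This is finite because $c\le (N-1)\theta/\pi<W/2<1$, and $Z(-c)=Z(c)$ since $Z$ is even. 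So the total mass is one.

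For the area identity, start from the welding identity \eqref{eq:welding-input} with $r_0=1$ and integrate over $r_N$. This writes $\mathcal M(W;1,\bullet)$, up to a constant, as an integral over $(r_1,\dots,r_N)$ of products of $\mathcal M(\gamma^2/2;r_{k-1},r_k)$. The total area is the sum of the component areas, which are conditionally independent given the lengths. Each component has mass proportional to $K(r_{k-1},r_k)$. Its area is $(\sin\theta/2)$ times a cone-excursion duration from radius $r_{k-1}/\sin\theta$ to radius $r_k/\sin\theta$.

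Next change variables to the ratios $\ell_k=r_k/r_{k-1}$, so that $r_k=\prod_{j\le k}\ell_j$. The Jacobian is $\prod_{k=1}^{N-1}r_k=\prod_{j=1}^{N-1}\ell_j^{\,N-j}$, counting the $\dd r_N$ integration as well. The kernel scales as
\[
K(r_{k-1},r_k)=r_{k-1}^{-2}\,K(1,\ell_k),
\]
so the product of the kernels contributes $\prod_{k}r_{k-1}^{-2}=\prod_{j=1}^{N-1}\ell_j^{-2(N-j)}$. Combining, the joint weight on the ratios is $\prod_j \ell_j^{-(N-j)}K(1,\ell_j)$. This is exactly the $l$-marginal of \eqref{eq:block-pair-density} with exponent $-(N-k)$, so the $\ell_k$ are independent with these laws.

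Brownian scaling by $r_{k-1}$ then gives the durations. The excursion from $r_{k-1}/\sin\theta$ to $r_k/\sin\theta$ has the law of $r_{k-1}^2$ times the excursion from $1/\sin\theta$ to $\ell_k/\sin\theta$. Conditionally on $\ell_k$, the latter has density proportional to $\mathsf K_\theta(1/\sin\theta,\ell_k/\sin\theta,t)$. Its normalizing constant is \eqref{eq:integrated-cone-kernel}, which is itself proportional to $K(1,\ell_k)$. Hence the joint law of $(\ell_k,T_k)$ is precisely \eqref{eq:block-pair-density}.

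Summing the component areas gives
\[
A_\gamma=\frac{\sin\theta}{2}\sum_{k=1}^N r_{k-1}^2\,T_k
=\frac{\sin\theta}{2}\sum_{k=1}^N\Big(\prod_{j<k}\ell_j^2\Big)T_k,
\]
which is \eqref{eq:block-area}. Normalizing the measure removes $c_{\gamma,N}$ and all other constants, since we have already shown the densities have mass one.

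The main obstacle is the exponent bookkeeping in the change of variables. The Jacobian contributes $\ell_j^{N-j}$ and the kernels contribute $\ell_j^{-2(N-j)}$, which must net to exactly $-(N-k)$ for block $k$. I would check this first on $N=2$ by hand.

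A secondary point is justifying the area law of a single block. This relies on the cited one-block mating-of-trees statement with variance $2/\sin\theta$, together with the linear map to the cone. Both are quoted rather than proved here.
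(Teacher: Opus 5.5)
Your proposal is correct and follows essentially the same route as the paper: the change of variables to the ratios $\ell_k$ with Jacobian $\prod_j\ell_j^{N-j}$ and kernel factor $\prod_j\ell_j^{-2(N-j)}$, the beta-integral normalization, and the identification of the conditional duration density via \eqref{eq:integrated-cone-kernel}. The only difference is cosmetic: you apply the one-block law at lengths $(r_{k-1},r_k)$ and rescale the cone excursion by Brownian scaling, whereas the paper first rescales each component disk by quantum scaling to lengths $(1,\ell_k)$; the two are equivalent.
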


\begin{proof}
Set $r_0=1$ and integrate \eqref{eq:welding-input} over the free final
length $r_N$. The joint boundary-length density is proportional to
$\prod_{k=1}^N K(r_{k-1},r_k)\prod_{k=1}^N\dd r_k$.
Put $\ell_k=r_k/r_{k-1}$, including the final ratio $\ell_N$.
The transformation $r_k=\prod_{j\le k}\ell_j$ has Jacobian
\[
 \prod_{k=1}^N r_{k-1}=\prod_{j=1}^N\ell_j^{N-j}.
\]
Since $K(cx,cy)=c^{-2}K(x,y)$, the kernel product contributes
$\prod_{j=1}^N\ell_j^{-2(N-j)}\prod_{k=1}^N K(1,\ell_k)$.
Combining the two factors gives
\[
 \prod_{k=1}^N
 \frac{\ell_k^{\pi/\theta-(N-k)-1}}
      {(1+\ell_k^{\pi/\theta})^2}\,\dd\ell_k.
\]
In particular, the boundary bias is negative.

To normalize one factor, substitute $y=l^{\pi/\theta}$ and use the beta
integral. For $|b|<1$,
\[
 \int_0^\infty
 \frac{l^{(\pi/\theta)(1+b)-1}}{(1+l^{\pi/\theta})^2}\,\dd l
 =\frac{\theta}{\pi}\Gamma(1+b)\Gamma(1-b)
 =\frac{\theta}{\pi}Z(b).
\]
Here $b=-(N-k)\theta/\pi$ lies in $(-1,0]$, since $N\theta/\pi=W/2<1$.
Thus all factors have finite positive mass, and the boundary ratios
are independent with densities
\begin{equation}\label{eq:block-ratio-density}
 \frac{\pi}{\theta Z(-(N-k)\theta/\pi)}
 \frac{l^{\pi/\theta-(N-k)-1}}{(1+l^{\pi/\theta})^2}\,\dd l.
\end{equation}

Conditional on these ratios, scale the $k$-th disk by $r_{k-1}^{-1}$
in boundary length. Its boundary lengths become $(1,\ell_k)$ and its
area is divided by $r_{k-1}^2$. The one-block area law identifies this
rescaled area as $(\sin\theta/2)T_k$, with conditional duration density
\[
 \frac{\mathsf K_\theta(1/\sin\theta,l/\sin\theta,t)}
 {\int_0^\infty\mathsf K_\theta(1/\sin\theta,l/\sin\theta,u)\,\dd u}
 \,\dd t.
\]
Multiplying this density by \eqref{eq:block-ratio-density} and using
\eqref{eq:integrated-cone-kernel} gives exactly
\eqref{eq:block-pair-density}. Conditional independence of the component
disks and factorization of the ratio density prove independence of the
pairs. Additivity of area under welding proves \eqref{eq:block-area}.
The final ratio $\ell_N$ is auxiliary: integrating it out gives the
terminal duration law, without any extra tilt.
\end{proof}

All remaining arguments use Proposition~\ref{prop:block}. The geometric
input is now a consequence of the cited welding and mating-of-trees
results, rather than a hypothesis about an unspecified area variable.

\subsection{Scalar functions used in both HJ equations}
Retain $Z,H,\chi$ from the introduction and set $h(v)=H'(v)/H(v)>0$.
The map $\chi:(-\infty,1)\to(-1,\infty)$ is strictly decreasing, with
$\chi'(v)=-\pi^2/(2H(v))$. The symbol $F$ below denotes the continuous
moment comparison function, not a cone kernel.

\subsection{The common area recursion}\label{sec:partial-areas}

We work on the probability space of the independent pairs in
Proposition~\ref{prop:block}, and realize $A_\gamma$ by
\eqref{eq:block-area}. The physical blocks are ordered $1,\ldots,N$.
Write $r_0=1$ and $r_k=\prod_{j=1}^k\ell_j$ for their successive
boundary lengths. The $k$-th block has boundary lengths
$(r_{k-1},r_k)$ and area $(\sin\theta/2)r_{k-1}^2T_k$.

Fix $1\le m\le N$. The last $m$ blocks have indices
$N-m+1,\ldots,N$, and their incoming boundary length is $r_{N-m}$.
Divide every boundary length in this group by $r_{N-m}$; this divides
every area by $r_{N-m}^2$. Define the resulting total area by
\begin{equation}\label{eq:partial-area-definition}
 A^{(m)}:=\frac{\sin\theta}{2}
 \sum_{k=N-m+1}^N
 \left(\prod_{r=N-m+1}^{k-1}\ell_r^2\right)T_k.
\end{equation}
Set $A^{(0)}=0$ and interpret empty products as one. Equivalently,
\[
 r_{N-m}^2A^{(m)}
 =\frac{\sin\theta}{2}\sum_{k=N-m+1}^N r_{k-1}^2T_k
\]
is the area contributed by these blocks before rescaling.
This rescaling is performed separately for each $m$; the variables
$A^{(m)}$ are not the unrescaled partial sums of the total area.

For example, whenever the indicated blocks exist,
\[
 \begin{aligned}
 A^{(1)}&=\frac{\sin\theta}{2}T_N,\\
 A^{(2)}&=\frac{\sin\theta}{2}
       (T_{N-1}+\ell_{N-1}^2T_N),\\
 A^{(3)}&=\frac{\sin\theta}{2}
       (T_{N-2}+\ell_{N-2}^2T_{N-1}
                  +\ell_{N-2}^2\ell_{N-1}^2T_N).
 \end{aligned}
\]
Thus increasing $m$ means adding a block on the left, not appending
one on the right. The variable $A^{(m-1)}$ depends only on the pairs
with indices $N-m+2,\ldots,N$, and hence is independent of the new
pair $(\ell_{N-m+1},T_{N-m+1})$.

\begin{lemma}[Adding one block]\label{lem:partial-area-recursion}
For $1\le m\le N$,
\[
 A^{(m)}=\frac{\sin\theta}{2}T_{N-m+1}
       +\ell_{N-m+1}^2A^{(m-1)},
 \qquad A^{(N)}=A_\gamma.
\]
The new block is independent of $A^{(m-1)}$.
\end{lemma}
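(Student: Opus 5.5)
The lemma is a direct algebraic consequence of the definition \eqref{eq:partial-area-definition} together with the independence of the pairs in Proposition~\ref{prop:block}, so I would prove it in three short steps.

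\textbf{Step 1: split off the first term.} Fix $1\le m\le N$ and set $i=N-m+1$. In the sum defining $A^{(m)}$, the term with $k=i$ has an empty product, so it equals $(\sin\theta/2)T_i$.

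\textbf{Step 2: factor the remaining terms.} For $i+1\le k\le N$, write
\[
 \prod_{r=i}^{k-1}\ell_r^2=\ell_i^2\prod_{r=i+1}^{k-1}\ell_r^2 .
\]
Pulling out $\ell_i^2$ leaves
\[
 \frac{\sin\theta}{2}\sum_{k=i+1}^{N}\Bigl(\prod_{r=i+1}^{k-1}\ell_r^2\Bigr)T_k .
\]
Since $i+1=N-(m-1)+1$, this sum is exactly \eqref{eq:partial-area-definition} with $m$ replaced by $m-1$, that is, $A^{(m-1)}$. When $m=1$ the sum is empty, which matches the convention $A^{(0)}=0$. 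This gives the recursion.

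\textbf{Step 3: identify the endpoint and check independence.} For $m=N$ we have $i=1$, and the definition reduces termwise to the right side of \eqref{eq:block-area}. Because we have fixed this realization of $A_\gamma$, it follows that $A^{(N)}=A_\gamma$. For independence, $A^{(m-1)}$ is a measurable function of the pairs $(\ell_k,T_k)$ with $k\ge i+1$ only. By Proposition~\ref{prop:block} these pairs are independent of $(\ell_i,T_i)$.

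There is no real obstacle. The only point that needs care is the index bookkeeping: the shifted sum must start at $N-(m-1)+1$, and the empty-product and $A^{(0)}$ conventions must handle the case $m=1$.
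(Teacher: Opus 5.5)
Your proof is correct and follows the paper's argument essentially line by line. You split off the $k=N-m+1$ term, factor $\ell_{N-m+1}^2$ out of the remaining products to recover $A^{(m-1)}$, handle $m=1$ via the empty sum, and deduce independence from the independence of the block pairs (the paper also notes in passing that the new block has bias $-(m-1)\theta/\pi$, which is not part of the statement).
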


\begin{proof}
In \eqref{eq:partial-area-definition}, the term $k=N-m+1$ is
$(\sin\theta/2)T_{N-m+1}$. For every $k\ge N-m+2$,
\[
 \prod_{r=N-m+1}^{k-1}\ell_r^2
 =\ell_{N-m+1}^2\prod_{r=N-m+2}^{k-1}\ell_r^2.
\]
The remaining sum is therefore $\ell_{N-m+1}^2A^{(m-1)}$.
This factor is necessary because the last $m-1$ blocks have incoming
length $\ell_{N-m+1}$ in the coordinates used for $A^{(m)}$, whereas
their incoming length is one in the definition of $A^{(m-1)}$.
For $m=1$ the remaining sum is empty and $A^{(0)}=0$.
Independence was noted above, and $m=N$ gives \eqref{eq:block-area}.
Finally, substituting $k=N-m+1$ into the block bias
$-(N-k)\theta/\pi$ gives $-(m-1)\theta/\pi$.
\end{proof}

\section{Exact moment recursion}\label{sec:moment-exact}

Fix $0<q<2$ and integers $n$ with $\gamma^2n\to q$.
We first compute the mixed block moments and then apply the common area
recursion to obtain $Q_{m,j}$.

\subsection{Mixed block moments}

We derive the mixed moments of the boundary ratio and the Brownian
running time.  These moments will be used when expanding the total
area as a sum of block contributions.

For \(b\in(-1,1)\), let \(\mathbb E_b\) denote expectation under the
one-block density
\[
 \frac{\pi^2}{\theta^2\sin^2\theta\,Z(b)}
 l^{\pi b/\theta}
 \mathsf K_\theta
 \left(\frac1{\sin\theta},\frac{l}{\sin\theta},t\right)
 \dd l\,\dd t.
\]
For block \(k\) in Proposition~\ref{prop:block}, the bias is
\(b=-(N-k)\theta/\pi\).

We first compute an integral that appears after expanding the cone
kernel into its Bessel series.

For the next lemma, fix an integer \(d\ge0\), a number \(\nu>0\),
and \(\xi\) satisfying \(-\nu<\xi<\nu-2d\).

\begin{lemma}[An integrated Bessel term]
\label{lem:integrated-bessel-term}
Under these conditions,
\begin{align}
 &\int_0^\infty\int_0^\infty
 l^{\xi-1}u^{d-1}
 e^{-(1+l^2)/(2u)}I_\nu(l/u)\,\dd u\,\dd l
 \notag\\
 &\qquad=
 \frac{2^{d+1}d!}
 {\displaystyle\prod_{h=0}^d
 \bigl[\nu^2-(\xi+2h)^2\bigr]}.
 \label{eq:integrated-bessel-term}
\end{align}
\end{lemma}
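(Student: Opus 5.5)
The plan is to prove \eqref{eq:integrated-bessel-term} by induction on $d$. The induction will use the fact that the integrand is, up to normalization, the transition density of a Bessel-type diffusion. Write
\[
 p(l,u)=\frac1u\,e^{-(1+l^2)/(2u)}I_\nu(l/u),\qquad
 J_d(\xi)=\int_0^\infty\!\!\int_0^\infty l^{\xi-1}u^{d}\,p(l,u)\,\dd u\,\dd l ,
\]
so that the left side of \eqref{eq:integrated-bessel-term} is $J_d(\xi)$. All integrands are positive, so Tonelli lets us integrate in either order.

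\textbf{Base case $d=0$.} I would use the classical Green-function identity
\[
 \int_0^\infty\frac1u e^{-(a^2+b^2)/(2u)}I_\nu(ab/u)\,\dd u=\frac1\nu\Bigl(\frac{a\wedge b}{a\vee b}\Bigr)^{\nu}.
\]
It can be checked by substituting $u\mapsto ab/u$ and using $\int_0^\infty e^{-cs}I_\nu(s)\,\dd s/s=\nu^{-1}(c-\sqrt{c^2-1})^\nu$ with $c=(a^2+b^2)/(2ab)$. With $a=1$ and $b=l$, the $l$-integral splits at $l=1$:
\[
 J_0(\xi)=\frac1\nu\Bigl[\frac1{\nu+\xi}+\frac1{\nu-\xi}\Bigr]=\frac{2}{\nu^2-\xi^2}.
\]
This is finite exactly when $-\nu<\xi<\nu$, and it matches the claimed formula at $d=0$.

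\textbf{Inductive step.} The function $p$ solves the radial heat equation
\[
 \partial_u p=\tfrac12\Bigl(\partial_l^2+\tfrac1l\partial_l-\tfrac{\nu^2}{l^2}\Bigr)p .
\]
This can be verified directly from the Bessel ODE, since $p$ is the $2$-dimensional radial heat kernel for angular mode $\nu$, started from radius $1$. Multiply by $u^{d}$ and integrate in $u$. Integrating by parts in $u$ turns the left side into $-d\,u^{d-1}p$. Next multiply by $l^{\xi+1}$ and integrate in $l$. Integrating by parts twice in $l$, the Mellin transform of the operator becomes multiplication by $\tfrac12(\xi^2-\nu^2)$ acting on weight $l^{\xi-1}$. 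This yields
\[
 -d\,J_{d-1}(\xi+2)=\tfrac12\bigl(\xi^2-\nu^2\bigr)J_d(\xi),
 \qquad\text{i.e.}\qquad
 J_d(\xi)=\frac{2d\,J_{d-1}(\xi+2)}{\nu^2-\xi^2}.
\]
The hypothesis $-\nu<\xi<\nu-2d$ implies $-\nu<\xi+2<\nu-2(d-1)$, so the induction hypothesis applies to $J_{d-1}(\xi+2)$. Substituting it gives the factor $2^{d+1}d!$ and the product $\prod_{h=0}^d[\nu^2-(\xi+2h)^2]$, as claimed.

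\textbf{Main obstacle: vanishing of boundary terms.} The delicate step is justifying that all boundary terms vanish.

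\emph{At $u\to0$.} Here $u^dp\to0$ for $l\neq1$, and it is integrable against $l^{\xi-1}$. For $d=1$ one must also check that $\int l^{\xi-1}u\,p\,\dd l\to0$; this holds because $u\,p$ concentrates near $l=1$ with total mass $O(u)$.

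\emph{At $u\to\infty$.} Here $I_\nu(l/u)\sim(l/2u)^\nu/\Gamma(\nu+1)$. After $l$-integration the tail therefore decays, provided $\xi+2d<\nu$. This is the reason for the upper constraint.

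\emph{At $l\to0$ and $l\to\infty$.} I would bound $\int u^d p\,\dd u$ and its $l$-derivative by $C\,l^{\nu}$ near $0$ and by $C\,l^{-\nu+2d}$ near $\infty$. These bounds follow from the same $u\mapsto l/u$ substitution used in the base case. They show that the products $l^{\xi+1}\partial_l(\cdot)$ and $l^{\xi}(\cdot)$ vanish at both ends precisely in the stated range of $\xi$.

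I would try to sidestep these checks first by cutting off with $u\in[\epsilon,R]$ and $l\in[\delta,1/\delta]$ and passing to the limit with monotone convergence. If that becomes messy, the fallback is a direct computation: expand $I_\nu$ in its power series, integrate term by term in $u$ to get Gamma functions, and then take the Mellin integral. That route is more computational but avoids integration by parts altogether.
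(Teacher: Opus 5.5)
Your route is correct, but it is genuinely different from the paper's. The paper's proof is exactly your fallback. It expands $I_\nu(l/u)$ in its positive power series and integrates term by term by Tonelli, so no boundary terms ever arise. The $u$-integral gives a Gamma function and the $l$-integral a Beta integral. The resulting series in $m$ is then summed by Gauss's ${}_2F_1(1)$ formula, and the gamma ratios collapse to $\prod_{h=0}^d[\nu^2-(\xi+2h)^2]$. This handles all $d$ at once and uses only positivity.

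You instead use that $p$ is the heat kernel of $\tfrac12(\partial_l^2+l^{-1}\partial_l-\nu^2l^{-2})$, which does follow from the Bessel equation, to get $J_d(\xi)=2d\,J_{d-1}(\xi+2)/(\nu^2-\xi^2)$, and you supply a Green-function base case. The recursion, the Mellin symbol $\tfrac12(\xi^2-\nu^2)$, the preservation of the range under $(d,\xi)\mapsto(d-1,\xi+2)$, and your base case all check out. What your route buys is an explanation of the answer. Each factor is the Mellin symbol of the Bessel generator at a shifted exponent. The admissible range $-\nu<\xi<\nu-2d$ is exactly the decay $l^{\nu}$ at $0$ and $l^{2d-\nu}$ at $\infty$ of $\int_0^\infty u^{d-1}e^{-(1+l^2)/(2u)}I_\nu(l/u)\,du$. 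The price is analytic bookkeeping that the series route avoids entirely.

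For that bookkeeping, take the Mellin transform first at fixed $u$. Set $M(u,\alpha)=\int_0^\infty l^{\alpha-1}p(l,u)\,dl$. For $u>0$ the $l$-boundary terms vanish trivially, since $p(\cdot,u)=O(l^\nu)$ at $0$, decays like a Gaussian at $\infty$, and $\xi>-\nu$. This gives $\partial_uM(u,\xi+2)=\tfrac12(\xi^2-\nu^2)M(u,\xi)$. Integrating against $u^d$ over $[\epsilon,R]$ leaves only the two $u$-boundary terms you identified:
\begin{itemize}
\item $R^dM(R,\xi+2)\asymp R^{(2d+\xi-\nu)/2}\to0$;
\item $\epsilon^dM(\epsilon,\xi+2)\to0$, because $l\,p(l,\epsilon)\,dl$ concentrates at $l=1$ with mass tending to one, so $M(\epsilon,\xi+2)\to1$.
\end{itemize}
Monotone convergence then gives finiteness of $J_d(\xi)$ from the finite left side. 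At $d=0$ the same computation, with $\epsilon$-boundary term $-1$ instead of $0$, yields $J_0(\xi)=2/(\nu^2-\xi^2)$ directly, so you do not even need the Laplace-transform identity for $I_\nu(s)/s$.

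This order also makes your $l\to0,\infty$ bounds on $\int u^dp\,du$ unnecessary. As written, those derivative bounds are off by a power of $l$. The correct bounds are $O(l^{\nu-1})$ at $0$ and $O(l^{2d-\nu-1})$ at $\infty$. The stated $C\,l^{2d-\nu}$ at infinity would not make $l^{\xi+1}\partial_l(\cdot)$ vanish for $\xi\in[\nu-2d-1,\nu-2d)$.
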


\begin{proof}
The modified Bessel function has the positive series
\[
 I_\nu(l/u)
 =
 \sum_{m=0}^\infty
 \frac{(l/(2u))^{\nu+2m}}
 {m!\,\Gamma(\nu+m+1)}.
\]
Thus Tonelli's theorem allows us to integrate term by term.
The assumptions imply \(\nu>d\).  Substituting
\(r=(1+l^2)/(2u)\), we obtain
\[
 \int_0^\infty
 u^{d-\nu-2m-1}e^{-(1+l^2)/(2u)}\,\dd u
 =
 \Gamma(\nu+2m-d)
 \left(\frac{1+l^2}{2}\right)^{d-\nu-2m}.
\]
Next, the beta integral gives
\begin{align*}
 &\int_0^\infty
 l^{\xi+\nu+2m-1}(1+l^2)^{d-\nu-2m}\,\dd l\\
 &\qquad=
 \frac{
 \Gamma\left(\frac{\nu+\xi}{2}+m\right)
 \Gamma\left(\frac{\nu-\xi}{2}+m-d\right)}
 {2\Gamma(\nu+2m-d)}.
\end{align*}
The gamma factors in the numerator have positive arguments by the
assumptions on \(\xi\).

Combining these two integrations, the left-hand side of
\eqref{eq:integrated-bessel-term} becomes
\[
 2^{-d-1}
 \sum_{m=0}^\infty
 \frac{
 \Gamma\left(\frac{\nu+\xi}{2}+m\right)
 \Gamma\left(\frac{\nu-\xi}{2}-d+m\right)}
 {m!\,\Gamma(\nu+1+m)}.
\]
We evaluate this series using the identity
\[
 \sum_{m=0}^\infty
 \frac{\Gamma(a+m)\Gamma(b+m)}
 {m!\,\Gamma(c+m)}
 =
 \frac{\Gamma(a)\Gamma(b)\Gamma(c-a-b)}
 {\Gamma(c-a)\Gamma(c-b)},
 \qquad
 a,b>0,\quad c>a+b.
\]
For completeness, this identity follows by writing
\[
 \frac{\Gamma(b+m)}{\Gamma(c+m)}
 =
 \frac1{\Gamma(c-b)}
 \int_0^1 r^{b+m-1}(1-r)^{c-b-1}\,\dd r
\]
and summing
\(\sum_{m\geq0}\Gamma(a+m)r^m/m!
=\Gamma(a)(1-r)^{-a}\).
The remaining integral is another beta integral.

Applying the identity with
\(a=(\nu+\xi)/2\), \(b=(\nu-\xi)/2-d\), and \(c=\nu+1\)
gives
\[
 2^{-d-1}d!\,
 \frac{
 \Gamma\left(\frac{\nu+\xi}{2}\right)
 \Gamma\left(\frac{\nu-\xi}{2}-d\right)}
 {
 \Gamma\left(\frac{\nu-\xi}{2}+1\right)
 \Gamma\left(\frac{\nu+\xi}{2}+d+1\right)}.
\]
Finally,
\[
 \frac{\Gamma(a)}{\Gamma(a+d+1)}
 =
 \frac1{\prod_{h=0}^d(a+h)}.
\]
Applying this to the two gamma ratios and pairing the factors yields
\[
 \prod_{h=0}^d
 \left(\frac{\nu+\xi}{2}+h\right)
 \left(\frac{\nu-\xi}{2}-h\right)
 =
 4^{-(d+1)}
 \prod_{h=0}^d\bigl[\nu^2-(\xi+2h)^2\bigr].
\]
This proves \eqref{eq:integrated-bessel-term}.
\end{proof}

For mixed moments, fix \(b\in(-1,1)\), an integer \(d\ge0\),
and \(p\in\mathbb R\) satisfying
\[
 b+\frac{p\theta}{\pi}>-1,
 \qquad
 b+\frac{(p+2d)\theta}{\pi}<1.
\]
For a function \(J\), brackets denote divided differences:
\(J[v_0]=J(v_0)\) and
\[
 J[v_0,\ldots,v_d]
 =\frac{J[v_1,\ldots,v_d]-J[v_0,\ldots,v_{d-1}]}{v_d-v_0},
\]
with continuous extension when inputs coincide. In particular,
\(J[v,\ldots,v]=J^{(d)}(v)/d!\) for \(d+1\) equal inputs.

\begin{lemma}[Mixed block moments]\label{lem:mixed-block-moments}
The mixed moment is finite and equals
\begin{equation}\label{eq:mixed}
 \mathbb E_b[\ell^pT^d]
 =
 \frac{d!}{Z(b)}
 \left(\frac{2\theta^2}{\pi^2\sin^2\theta}\right)^d
 H\left[
 \left(b+\frac{p\theta}{\pi}\right)^2,
 \left(b+\frac{(p+2)\theta}{\pi}\right)^2,
 \ldots,
 \left(b+\frac{(p+2d)\theta}{\pi}\right)^2
 \right].
\end{equation}
\end{lemma}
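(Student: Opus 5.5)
The plan is to compute $\mathbb E_b[\ell^pT^d]$ directly from the one-block density. Expand $\mathsf K_\theta$ in its Bessel series, integrate each term with Lemma~\ref{lem:integrated-bessel-term}, and resum the resulting series in $j$ as a divided difference of $H$.

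First substitute $t=u/\sin^2\theta$. With $a=1/\sin\theta$ and $a'=l/\sin\theta$:
\begin{itemize}
\item $aa'/t=l/u$ and $(a^2+a'^2)/(2t)=(1+l^2)/(2u)$;
\item $aa't=lu/\sin^4\theta$, $t^d\,\dd t=\sin^{-2d-2}\theta\,u^d\,\dd u$.
\end{itemize}
The $j$-th term of $\int\int l^{p+\pi b/\theta}t^d\mathsf K_\theta\,\dd l\,\dd t$ is then $(-1)^{j+1}j^2\sin^{2-2d}\theta$ times the left side of \eqref{eq:integrated-bessel-term}, with $\nu=j\pi/\theta$ and $\xi=\pi b/\theta+p$.

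The hypotheses $b+p\theta/\pi>-1$ and $b+(p+2d)\theta/\pi<1$ are exactly $-\nu<\xi<\nu-2d$ for $j=1$, hence for all $j\ge1$. Writing $c_h=b+(p+2h)\theta/\pi$, the lemma gives the $j$-th term
\[
 (-1)^{j+1}j^2\sin^{2-2d}\theta\,2^{d+1}d!\,(\theta/\pi)^{2d+2}\prod_{h=0}^d(j^2-c_h^2)^{-1}.
\]

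Next comes the resummation. Since $1/\prod_h(j^2-v_h)$ is the divided difference of $v\mapsto1/(j^2-v)$ at $v_0,\dots,v_d$, the $j$-sum is the divided difference at $(c_0^2,\dots,c_d^2)$ of $S(v)=\sum_{j\ge1}(-1)^{j+1}j^2/(j^2-v)$. From the partial fraction expansion $\pi z/\sin(\pi z)=1+2\sum_{j\ge1}(-1)^{j+1}z^2/(j^2-z^2)$ and $v/(j^2-v)=-1+j^2/(j^2-v)$, one has $H(v)=\mathrm{const}+2S(v)$. Here $S$ is understood in the Abel sense, and for $v<0$ the same identity holds via $\sinh$.

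For $d\ge1$ the constant is annihilated by the divided difference, so the $j$-sum equals $\tfrac12 H[c_0^2,\dots,c_d^2]$. For $d=0$ the Abel-summed constant must be tracked; alternatively this case reduces to the beta integral already computed in the proof of Proposition~\ref{prop:block}. Collecting constants:
\[
 \frac{\pi^2}{\theta^2\sin^2\theta Z(b)}\cdot\sin^{2-2d}\theta\cdot2^{d+1}d!\Big(\frac{\theta}{\pi}\Big)^{2d+2}\cdot\frac12
 =\frac{d!}{Z(b)}\Big(\frac{2\theta^2}{\pi^2\sin^2\theta}\Big)^d,
\]
which is exactly \eqref{eq:mixed}.

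The main obstacle is justifying the interchange of the alternating $j$-series with the $(l,u)$ integral. The terms decay like $j^{2-2(d+1)}=j^{-2d}$, so the series is absolutely summable only for $d\ge1$, and only barely at $d=1$. The whole expansion is also signed, so Tonelli does not apply to it. I would first insert an Abel factor $r^{j}$, $r<1$: the kernel $\sum_j(-1)^{j+1}j^2r^jI_{j\pi/\theta}$ is dominated absolutely, so the interchange is legitimate for each $r<1$. Then I would let $r\uparrow1$. On the integral side, monotone or dominated convergence applies because the limiting integrand is the positive kernel $\mathsf K_\theta$. On the series side, Abel's theorem applies, and it also identifies the Abel-summed series with $H$. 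Finiteness of the moment follows because the integrand is nonnegative and the resulting value is finite.
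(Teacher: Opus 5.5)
Your proposal is correct and follows the paper's proof essentially step for step. The shared steps are the substitution $t=u/\sin^2\theta$, termwise use of Lemma~\ref{lem:integrated-bessel-term} with $\nu=j\pi/\theta$ and $\xi=\pi b/\theta+p$, resummation through the resolvent divided difference and the partial-fraction expansion of $H$, and the beta integral for $d=0$.

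The Abel regularization is unnecessary, and your reason for introducing it (that Tonelli does not apply to a signed series) is mistaken. For $d\ge1$, Tonelli applied to the absolute values gives $\sum_j j^2\int(\text{positive Bessel term})=\sum_j O(j^{-2d})<\infty$, and this alone licenses Fubini for the signed series; that is exactly the paper's justification. For $d=0$, your Abel limit $r\uparrow1$ has no integrable majorant, so the beta-integral route you mention is the one that must be used there, as the paper does.
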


\begin{proof}
We first treat \(d=0\).  Integrating out the running time gives the
boundary-ratio density
\[
 \frac{\pi}{\theta Z(b)}
 \frac{l^{\pi(1+b)/\theta-1}}
 {(1+l^{\pi/\theta})^2}\,\dd l.
\]
With the substitution \(y=l^{\pi/\theta}\), we obtain
\begin{align*}
 \mathbb E_b[\ell^p]
 &=
 \frac1{Z(b)}
 \int_0^\infty
 \frac{y^{b+p\theta/\pi}}{(1+y)^2}\,\dd y\\
 &=
 \frac{
 \Gamma(1+b+p\theta/\pi)
 \Gamma(1-b-p\theta/\pi)}
 {Z(b)}\\
 &=
 \frac{Z(b+p\theta/\pi)}{Z(b)}.
\end{align*}
The assumptions ensure that both beta-integral parameters are positive.
Since \(Z(x)=H(x^2)\), this is \eqref{eq:mixed} for \(d=0\).

Now suppose \(d\geq1\).  To remove the initial Brownian radius,
substitute \(t=u/\sin^2\theta\).  Directly from the definition of the
kernel,
\[
 \mathsf K_\theta
 \left(\frac1{\sin\theta},
       \frac{l}{\sin\theta},
       \frac{u}{\sin^2\theta}\right)
 =
 \sin^4\theta\,\mathsf K_\theta(1,l,u).
\]
Therefore
\begin{align}
 \mathbb E_b[\ell^pT^d]
 &=
 \frac{\pi^2}{\theta^2Z(b)\sin^{2d}\theta}
 \int_0^\infty\int_0^\infty
 l^{\pi b/\theta+p}u^d
 \mathsf K_\theta(1,l,u)\,\dd u\,\dd l.
 \label{eq:mixed-rescaling}
\end{align}
Expanding the kernel gives
\begin{align*}
 &l^{\pi b/\theta+p}u^d\mathsf K_\theta(1,l,u)\\
 &\qquad=
 \sum_{j=1}^\infty(-1)^{j+1}j^2
 l^{\pi b/\theta+p-1}u^{d-1}
 e^{-(1+l^2)/(2u)}I_{j\pi/\theta}(l/u).
\end{align*}
Lemma~\ref{lem:integrated-bessel-term}, with
\(\nu=j\pi/\theta\) and \(\xi=\pi b/\theta+p\), evaluates the
integral of each Bessel term as
\[
 \frac{2^{d+1}d!}
 {\displaystyle\prod_{h=0}^d
 \left[
 \left(\frac{j\pi}{\theta}\right)^2
 -
 \left(\frac{\pi b}{\theta}+p+2h\right)^2
 \right]}.
\]
The hypotheses of that lemma hold for every \(j\geq1\).
Moreover, after multiplication by \(j^2\), these positive integrals
are \(O(j^{-2d})\) as \(j\to\infty\).
Since \(d\geq1\), their sum is finite.  Thus the Bessel series may be
integrated term by term despite its alternating signs.

Substituting into \eqref{eq:mixed-rescaling} and extracting
\((\pi/\theta)^2\) from each denominator factor gives
\begin{align}
 \mathbb E_b[\ell^pT^d]
 =
 \frac{2^{d+1}d!}{Z(b)}
 \left(\frac{\theta^2}{\pi^2\sin^2\theta}\right)^d
 \sum_{j=1}^\infty
 \frac{(-1)^{j+1}j^2}
 {\displaystyle\prod_{h=0}^d
 \left[
 j^2-
 \left(b+\frac{(p+2h)\theta}{\pi}\right)^2
 \right]}.
 \label{eq:mixed-series}
\end{align}

It remains to identify this series as a divided difference of \(H\).
The partial-fraction expansion of
\(H(v)=\pi\sqrt v/\sin(\pi\sqrt v)\) is
\[
 H(v)
 =
 1+2v\sum_{j=1}^\infty
 \frac{(-1)^{j+1}}{j^2-v},
 \qquad 0\leq v<1.
\]
This series converges locally uniformly.  For the resolvent
\(v\mapsto(j^2-v)^{-1}\), its divided difference is
\[
 \left[\frac1{j^2-v}\right][v_0,\ldots,v_d]
 =
 \prod_{h=0}^d\frac1{j^2-v_h}.
\]
For \(d\geq1\), the constant term has zero divided difference, and
\(v/(j^2-v)=j^2/(j^2-v)-1\).  Consequently,
\[
 H[v_0,\ldots,v_d]
 =
 2\sum_{j=1}^\infty
 \frac{(-1)^{j+1}j^2}
 {\prod_{h=0}^d(j^2-v_h)}.
\]
The identity extends to coincident nodes by continuity.
Taking
\(v_h=(b+(p+2h)\theta/\pi)^2\)
in this formula identifies the series in
\eqref{eq:mixed-series} and proves \eqref{eq:mixed}.
\end{proof}

\begin{lemma}[Application to the area expansion]
\label{lem:mixed-moments-area-expansion}
For every integer \(0\leq n<2/\gamma^2=\pi/(2\theta)\), the
multinomial expansion of \(\mathbb E[A_\gamma^n]\) involves only
finite mixed moments covered by
Lemma~\ref{lem:mixed-block-moments}.
\end{lemma}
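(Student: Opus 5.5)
Expanding $A_\gamma^n$ via \eqref{eq:block-area} by the multinomial theorem gives a finite sum of nonnegative terms
\[
 \Bigl(\frac{\sin\theta}{2}\Bigr)^n\binom{n}{d_1,\ldots,d_N}
 \prod_{k=1}^N\Bigl(\prod_{r<k}\ell_r^{2}\Bigr)^{d_k}T_k^{d_k},
 \qquad d_1+\cdots+d_N=n .
\]
Collect powers block by block. Block $k$ appears as $\ell_k^{p_k}T_k^{d_k}$ with $p_k=2\sum_{k'>k}d_{k'}$. By the independence in Proposition~\ref{prop:block}, the expectation of each term factorizes as $\prod_k\mathbb E_{b_k}[\ell^{p_k}T^{d_k}]$, where $b_k=-(N-k)\theta/\pi$. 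Since every summand is nonnegative, Tonelli justifies interchanging expectation with the finite sum. So it suffices to check that each factor satisfies the hypotheses of Lemma~\ref{lem:mixed-block-moments}:
\[
 b_k+\frac{p_k\theta}{\pi}>-1,
 \qquad b_k+\frac{(p_k+2d_k)\theta}{\pi}<1 .
\]

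For the first inequality, note $p_k\ge0$, so the left side is at least $b_k\ge-(N-1)\theta/\pi>-N\theta/\pi=-W/2>-1$.

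For the second inequality, note $b_k\le0$ and $p_k+2d_k=2\sum_{k'\ge k}d_{k'}\le2n$. Hence the left side is at most $2n\theta/\pi$. This is $<1$ exactly when $n<\pi/(2\theta)=2/\gamma^2$, which is the hypothesis. The identity $\pi/(2\theta)=2/\gamma^2$ is immediate from $\theta=\pi\gamma^2/4$. The case $d_k=0$ is included in the lemma, which allows $d\ge0$, and there it only uses the $\ell$-moment condition, which is covered by the same bounds. Blocks with $d_k=0$ and $p_k=0$ contribute the trivial factor $1$.

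Lemma~\ref{lem:mixed-block-moments} then gives finiteness of each factor, hence of each term and of $\mathbb E[A_\gamma^n]$. The same argument applies verbatim to $A^{(m)}$, since it uses only the blocks $N-m+1,\ldots,N$.

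The only delicate point is the bookkeeping of the exponent $p_k$. One must confirm that $\ell_k$ enters exactly through the later blocks, with exponent $2\sum_{k'>k}d_{k'}$, and that the worst case $b_k=0$ with all of the mass $n$ at or after block $k$ is exactly the case controlled by the strict inequality $n<2/\gamma^2$.
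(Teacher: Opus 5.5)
Your proposal is correct and follows essentially the same route as the paper: expand by the multinomial theorem, factor over the independent blocks with $\ell_k$ carrying exponent $2\sum_{j>k}d_j$, and check the two node inequalities (smallest node at least $-(N-1)\theta/\pi>-1$, largest at most $2n\theta/\pi<1$) using $N\theta/\pi=W/2<1$. The paper's proof additionally simplifies the deterministic prefactors to get the explicit composition formula \eqref{eq:area-composition-moments}, which is not needed for the finiteness claim itself.
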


\begin{proof}
Choose nonnegative integers \(d_1,\ldots,d_N\) with
\(\sum_{k=1}^Nd_k=n\).  The corresponding term in the expansion of
\[
 A_\gamma^n
 =
 \left[
 \frac{\sin\theta}{2}
 \sum_{k=1}^N
 \left(\prod_{i<k}\ell_i^2\right)T_k
 \right]^n
\]
is
\[
 n!\left(\frac{\sin\theta}{2}\right)^n
 \prod_{k=1}^N
 \frac{
 T_k^{d_k}\ell_k^{\,2\sum_{j>k}d_j}}
 {d_k!}.
\]
Indeed, \(\ell_k^2\) occurs in every area contribution from a block
with index larger than \(k\).

Independence of the block pairs therefore gives
\[
 \mathbb E[A_\gamma^n]
 =
 n!\left(\frac{\sin\theta}{2}\right)^n
 \sum_{\substack{d_1,\ldots,d_N\geq0\\
                 d_1+\cdots+d_N=n}}
 \prod_{k=1}^N
 \frac{
 \mathbb E_{-(N-k)\theta/\pi}
 \left[
 \ell^{\,2\sum_{j>k}d_j}T^{d_k}
 \right]}
 {d_k!}.
\]
For block \(k\), the smallest unsquared node in
\eqref{eq:mixed} is
\[
 -\frac{(N-k)\theta}{\pi}
 +\frac{2\theta}{\pi}\sum_{j>k}d_j
 \geq-\frac{(N-1)\theta}{\pi}>-1.
\]
The largest is
\[
 -\frac{(N-k)\theta}{\pi}
 +\frac{2\theta}{\pi}\sum_{j\geq k}d_j
 \leq\frac{2n\theta}{\pi}<1.
\]
Here we used \(N\theta/\pi=W/2<1\).
Thus every mixed moment is finite and satisfies
\eqref{eq:mixed}.

The factors \(d_k!\) in that formula cancel the multinomial
denominators.  The remaining deterministic factor is
\[
 \left(\frac{\sin\theta}{2}\right)^n
 \prod_{k=1}^N
 \left(\frac{2\theta^2}{\pi^2\sin^2\theta}\right)^{d_k}
 =
 \left(\frac{\theta^2}{\pi^2\sin\theta}\right)^n.
\]
Hence
\begin{align}
 \mathbb E[A_\gamma^n]
 ={}&
 \frac{
 n!\left(\frac{\theta^2}{\pi^2\sin\theta}\right)^n}
 {\displaystyle\prod_{k=1}^N
 Z\left(-\frac{(N-k)\theta}{\pi}\right)}
 \notag\\
 &\times
 \sum_{\substack{d_1,\ldots,d_N\geq0\\
                 d_1+\cdots+d_N=n}}
 \prod_{k=1}^N
 H\left[
 \left(
 \frac{\theta}{\pi}
 \left[-(N-k)+2\sum_{j>k}d_j+2h\right]
 \right)^2
 \,:\,h=0,\ldots,d_k
 \right].
 \label{eq:area-composition-moments}
\end{align}
In the last line, the notation inside the brackets lists the
\(d_k+1\) divided-difference nodes in order.
This is the positive sum reorganized into a recurrence below.
\end{proof}

\subsection{From the multinomial expansion to a recurrence}

Use the partial areas from Section~\ref{sec:partial-areas}. The index $m$
counts blocks and $j$ denotes moment order.

For $0\le j\le n$, define the normalized moments
\[
 Q_{m,j}:=
 \frac{\prod_{i=0}^{m-1}Z(i\theta/\pi)}
 {j!\left(\frac{\theta^2}{\pi^2\sin\theta}\right)^j}
 \E[(A^{(m)})^j].
\]
We use the empty-product convention and take every zeroth moment
to be one. In particular, $Q_{0,0}=1$ and $Q_{0,j}=0$ for $j>0$.
Since $q<2$, the target order satisfies $n<2/\gamma^2$ for all
sufficiently small $\gamma$, so all these moments are finite.
The normalization removes the factorials, the common scale factor,
and the boundary-bias normalizers from the following recurrence.

Write
\[
 w_\theta(b,d)
 =H\left[b^2,(b+2\theta/\pi)^2,\ldots,(b+2d\theta/\pi)^2\right],
\]
\begin{lemma}[Exact recurrence for the normalized moments]
For $1\le m\le N$ and $0\le j\le n$,
\begin{equation}\label{upper:eq:exact-recursion}
 Q_{m,j}
 =\sum_{d=0}^j
 w_\theta\left(\frac{(2(j-d)-(m-1))\theta}{\pi},d\right)
 Q_{m-1,j-d}.
\end{equation}
In particular,
\begin{equation}\label{upper:eq:exact-moment}
 \E[A_\gamma^n]
 =
 n!\left(\frac{\theta^2}{\pi^2\sin\theta}\right)^n
 \frac{Q_{N,n}}{\prod_{j=0}^{N-1}Z(j\theta/\pi)}.
\end{equation}
\end{lemma}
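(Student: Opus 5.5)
We prove the recurrence by expanding the common area recursion of Lemma~\ref{lem:partial-area-recursion} binomially and evaluating each term with Lemma~\ref{lem:mixed-block-moments}.

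\textbf{Binomial expansion.} Write $k=N-m+1$. By Lemma~\ref{lem:partial-area-recursion}, $A^{(m)}=\frac{\sin\theta}{2}T_k+\ell_k^2A^{(m-1)}$, and the pair $(\ell_k,T_k)$ is independent of $A^{(m-1)}$. The block has bias $b_m=-(m-1)\theta/\pi$. Expanding $(A^{(m)})^j$ binomially, with $d$ the power of the new block's area, gives
\[
 \E[(A^{(m)})^j]=\sum_{d=0}^j\binom jd\left(\frac{\sin\theta}{2}\right)^d
 \E_{b_m}\!\left[\ell^{2(j-d)}T^d\right]\E[(A^{(m-1)})^{j-d}].
\]
Every term is nonnegative, so the expansion and the use of independence are justified once each factor is finite.

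\textbf{Evaluating the block moment.} Apply Lemma~\ref{lem:mixed-block-moments} with $b=b_m$ and $p=2(j-d)$. The divided-difference nodes are then $\bigl(b_m+(2(j-d)+2h)\theta/\pi\bigr)^2$ for $0\le h\le d$, so the bracket is exactly $w_\theta\bigl((2(j-d)-(m-1))\theta/\pi,d\bigr)$.

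The hypotheses of that lemma are the main point to check, and I expect this to be the only real obstacle.
\begin{itemize}
\item The smallest node is $b_m+2(j-d)\theta/\pi\ge-(N-1)\theta/\pi>-1$, because $N\theta/\pi=W/2<1$.
\item The largest node is $b_m+2j\theta/\pi\le 2n\theta/\pi<1$, because $n<2/\gamma^2=\pi/(2\theta)$ for small $\gamma$.
\end{itemize}
This is the same verification as in Lemma~\ref{lem:mixed-moments-area-expansion}. Finiteness of $\E[(A^{(m-1)})^{j-d}]$ then follows by induction on $m$, starting from $A^{(0)}=0$.

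\textbf{Collecting constants.} Substitute the formula from Lemma~\ref{lem:mixed-block-moments}. The factor $d!$ combines with $\binom jd$ to give $j!/(j-d)!$. The scalar factors combine as
\[
 \left(\frac{\sin\theta}{2}\right)^d\left(\frac{2\theta^2}{\pi^2\sin^2\theta}\right)^d=\left(\frac{\theta^2}{\pi^2\sin\theta}\right)^d .
\]
Together with the $j-d$ powers already in the normalization of $Q_{m-1,j-d}$, this reconstitutes $j!\,(\theta^2/(\pi^2\sin\theta))^j$, which cancels against the prefactor of $Q_{m,j}$.

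The bias normalizer is $1/Z(b_m)=1/Z((m-1)\theta/\pi)$, since $Z$ is even. The product $\prod_{i=0}^{m-1}Z(i\theta/\pi)$ in the definition of $Q_{m,j}$ differs from the one in $Q_{m-1,j-d}$ by exactly the factor $Z((m-1)\theta/\pi)$, so this normalizer cancels too. What remains is \eqref{upper:eq:exact-recursion}.

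\textbf{The moment formula.} Finally, \eqref{upper:eq:exact-moment} follows from the definition of $Q_{N,n}$ together with $A^{(N)}=A_\gamma$. As a consistency check, unrolling the recursion in $m$ reproduces the composition sum \eqref{eq:area-composition-moments}.
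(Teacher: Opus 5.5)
Your proposal is correct and follows essentially the same route as the paper. Both arguments expand the one-block recursion binomially, evaluate the block factor by the mixed-moment formula with $b=-(m-1)\theta/\pi$, $p=2(j-d)$ and the evenness of $Z$, and then cancel via $\binom jd d!(j-d)!=j!$ and the telescoping $Z$-products. Your explicit node checks, $-(N-1)\theta/\pi>-1$ and $2n\theta/\pi<1$, are correct; the paper leaves them implicit here and refers to Lemma~\ref{lem:mixed-moments-area-expansion}.
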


\begin{proof}
The binomial theorem and independence give
\[
 \E[(A^{(m)})^j]
 =\sum_{d=0}^j\binom jd\left(\frac{\sin\theta}{2}\right)^d
 \E_{-(m-1)\theta/\pi}[\ell^{2(j-d)}T^d]
 \E[(A^{(m-1)})^{j-d}].
\]
Here $d$ is the power assigned to the new block's area, and $j-d$
is the power assigned to the remaining area. By \eqref{eq:mixed},
\[
 \left(\frac{\sin\theta}{2}\right)^d
 \E_{-(m-1)\theta/\pi}[\ell^{2(j-d)}T^d]
 =\frac{d!\left(\frac{\theta^2}{\pi^2\sin\theta}\right)^d}
 {Z((m-1)\theta/\pi)}
 w_\theta\left(\frac{(2(j-d)-(m-1))\theta}{\pi},d\right).
\]
We used the evenness of $Z$. The argument of $w_\theta$ includes
both the boundary bias and the power $2(j-d)$ of $\ell$.
The definition of the previous row gives
\[
 \E[(A^{(m-1)})^{j-d}]
 =\frac{(j-d)!\left(\frac{\theta^2}{\pi^2\sin\theta}\right)^{j-d}}
 {\prod_{i=0}^{m-2}Z(i\theta/\pi)}Q_{m-1,j-d}.
\]
Substituting these identities into the binomial expansion, use
$\binom jd d!(j-d)!=j!$ and
$Z((m-1)\theta/\pi)\prod_{i=0}^{m-2}Z(i\theta/\pi)
=\prod_{i=0}^{m-1}Z(i\theta/\pi)$ to obtain
\[
 \E[(A^{(m)})^j]
 =\frac{j!\left(\frac{\theta^2}{\pi^2\sin\theta}\right)^j}
 {\prod_{i=0}^{m-1}Z(i\theta/\pi)}
 \sum_{d=0}^j
 w_\theta\left(\frac{(2(j-d)-(m-1))\theta}{\pi},d\right)
 Q_{m-1,j-d}.
\]
Dividing by the common prefactor proves
\eqref{upper:eq:exact-recursion}. Equation~\eqref{upper:eq:exact-moment}
is the definition of $Q_{N,n}$, since $A^{(N)}=A_\gamma$.
\end{proof}

The recurrence determines successive rows from the initial row
$Q_{0,j}$. All weights are positive, since the product
$H(v)=\prod_{k\geq1}(1-v/k^2)^{-1}$ has positive Taylor coefficients.
Thus only the exponential rate of $Q_{N,n}$ remains to be determined.
The index $j$ below ranges from zero to the target moment order $n$.

\section{Exact Laplace recursion}\label{lower:sec:clock}

Fix $s\ge0$. With the partial areas of Section~\ref{sec:partial-areas},
define
\begin{equation}\label{lower:eq:def-fn}
 f_m(x)=\E\!\left[e^{-s e^{2x}A^{(m)}/\gamma^2}\right],
 \qquad 0\le m\le N.
\end{equation}
Thus $f_0=1$ and
$\Psi_{\gamma,W}(s)=\gamma^2\log f_N(0)$.
Lemma~\ref{lem:partial-area-recursion} and independence give
\begin{equation}\label{lower:eq:preliminary-recursion}
 f_n(x)=\E\!\left[
 e^{-\frac{s\sin\theta}{2\gamma^2}e^{2x}T_{N+1-n}}
 f_{n-1}(x+\log\ell_{N+1-n})\right].
\end{equation}
This is the Laplace counterpart of the binomial moment recursion.
The remaining task is to express the same block law using a Brownian
motion and an independent clock.

We next rewrite the joint law of
\((\log\ell_{N+1-n},T_{N+1-n})\) in terms of a Brownian motion and an
independent random clock. Let \(V_0=\sum_{j=1}^{\infty}E_j\), where the \(E_j\)'s are independent
exponential random variables with rates \(j^2\). Since
\(\sum_{j\geq1}\E[E_j]=\sum_{j\geq1}j^{-2}<\infty\), the series defining
\(V_0\) is finite almost surely. Euler's product formula gives
\begin{equation}\label{lower:eq:unbiased-clock-mgf}
\E[e^{vV_0}]
=
\prod_{j=1}^{\infty}\frac{j^2}{j^2-v}
=
H(v),
\qquad v<1.
\end{equation}

For \(1\leq n\leq N\), define \(V_n\) by
\begin{equation}\label{lower:eq:tilted-clock-law}
\mathbb{P}(V_n\in\dd v)
=
\frac{
\exp\left\{\left(\frac{(n-1)\theta}{\pi}\right)^2v\right\}
}{
H\left(\left(\frac{(n-1)\theta}{\pi}\right)^2\right)
}
\mathbb{P}(V_0\in\dd v).
\end{equation}
Equivalently, \(V_n\) is a sum of independent exponential random
variables with rates
\[
j^2-\left(\frac{(n-1)\theta}{\pi}\right)^2,
\qquad j\geq1.
\]
The assumption \(N\theta<\pi\) ensures that all these rates are positive.
Moreover,
\begin{equation}\label{lower:eq:clockmgf}
\E[e^{\lambda V_n}]
=
\frac{
H\left(\left(\frac{(n-1)\theta}{\pi}\right)^2+\lambda\right)
}{
H\left(\left(\frac{(n-1)\theta}{\pi}\right)^2\right)
},
\qquad
\lambda<
1-\left(\frac{(n-1)\theta}{\pi}\right)^2.
\end{equation}

Let \(B\) be a standard Brownian motion independent of \(V_n\). For a
starting log-boundary length \(x\), define
\begin{equation}\label{lower:eq:block-log-process}
X_v^{(n)}
=
x+\frac{\sqrt2\,\theta}{\pi}B_v
-\frac{2(n-1)\theta^2}{\pi^2}v,
\qquad
0\leq v\leq V_n.
\end{equation}
For every nonnegative measurable function \(g\), define
\begin{equation}\label{lower:eq:positiveoperator}
(\mathcal K_ng)(x)
=
\E\left[
\exp\left\{
-\frac{s\gamma^2}{16\sin\theta}
\int_0^{V_n}e^{2X_v^{(n)}}\dd v
\right\}
g\left(X_{V_n}^{(n)}\right)
\right].
\end{equation}
The operator \(\mathcal K_n\) is positive and order preserving: if
\(0\leq g_1\leq g_2\), then
\[
0\leq\mathcal K_ng_1\leq\mathcal K_ng_2.
\]

\begin{proposition}[Exact Brownian-clock recursion]
\label{lower:prop:exact-recursion}
For \(1\leq n\leq N\),
\begin{equation}\label{lower:eq:recursion}
f_n=\mathcal K_nf_{n-1}.
\end{equation}
Consequently,
\begin{equation}\label{lower:eq:transform-recursion}
f_N(0)
=
\left(
\mathcal K_N\mathcal K_{N-1}\cdots\mathcal K_1\mathbf 1
\right)(0)
=
\E[e^{-sA_\gamma/\gamma^2}].
\end{equation}
\end{proposition}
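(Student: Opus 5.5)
Comparing \eqref{lower:eq:preliminary-recursion} with \eqref{lower:eq:positiveoperator}, it suffices to prove one distributional identity. For the block pair $(\ell,T)$ with bias $b=-(n-1)\theta/\pi$, I will show
\[
 \Bigl(\log\ell,\;\tfrac{\sin\theta}{2}T\Bigr)
 \overset d=
 \Bigl(X^{(n)}_{V_n}-x,\;\tfrac{\gamma^2}{8\sin\theta}\textstyle\int_0^{V_n}e^{2(X^{(n)}_v-x)}\dd v\Bigr).
\]
Here $\tfrac{s}{\gamma^2}\cdot\tfrac{\gamma^2}{8\sin\theta}\cdot\tfrac12=\tfrac{s}{16\sin\theta}$ matches the constant in \eqref{lower:eq:positiveoperator}, up to checking the $\gamma^2$ bookkeeping. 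Once this holds, independence of the block from $A^{(n-1)}$ (Lemma~\ref{lem:partial-area-recursion}) gives $f_n=\mathcal K_nf_{n-1}$, and iterating from $f_0=1$ gives \eqref{lower:eq:transform-recursion}, since $f_N(0)=\E[e^{-sA_\gamma/\gamma^2}]$.

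\textbf{Step 1: the unbiased block ($b=0$).} Conditional on $\ell$, $T$ is the duration of a planar Brownian excursion in the cone of angle $\theta$, from radius $1/\sin\theta$ to radius $\ell/\sin\theta$. I will apply the conformal map $w=\log z$, which sends the cone to the strip $\{0<\operatorname{Im}w<\theta\}$. By conformal invariance of Brownian motion, the path becomes a strip excursion from one side to the other, run with the clock $\dd\tau=|z|^{-2}\dd t$. Hence
\[
 T=\int_0^{\tau}e^{2\operatorname{Re}W_u}\dd u\cdot\sin^{-2}\theta ,
\]
where $W$ is the strip excursion started at real part $-\log\sin\theta$. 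The real part of $W$ is a one-dimensional Brownian motion, independent of the excursion lifetime $\tau$ of the imaginary part across an interval of width $\theta$. The lifetime of a Brownian excursion across $[0,\theta]$ has Laplace transform $\tfrac{\sqrt{2\lambda}\theta}{\sinh(\sqrt{2\lambda}\theta)}$. After rescaling time by $(\theta/\pi)^2/2$, this is $H(-v)$, so $\tau$ equals $\tfrac{2\theta^2}{\pi^2}V_0$ in law. Writing $u=\tfrac{2\theta^2}{\pi^2}v$ turns the real part into $x+\tfrac{\sqrt2\theta}{\pi}B_v$, with the radius offset $-\log\sin\theta$ absorbed. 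The area becomes $\tfrac{\sin\theta}{2}\cdot\tfrac{2\theta^2}{\pi^2\sin^2\theta}\int_0^{V_0}e^{2X_v}\dd v$. Since $\tfrac{\theta^2}{\pi^2\sin\theta}=\tfrac{\gamma^4}{16\sin\theta}$, this gives the constant $\tfrac{s\gamma^2}{16\sin\theta}$ in \eqref{lower:eq:positiveoperator} after multiplying by $s/\gamma^2$.

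The exit law of $\log\ell$ should come out automatically from this picture. Averaging $e^{i\xi\cdot}$ of a centered Gaussian over $V_0$ gives $H(-\cdot)$, and this matches the logistic-type density $\propto l^{\pi/\theta-1}/(1+l^{\pi/\theta})^2$ via $Z$. I will verify that as a check. The joint density can also be cross-checked against the mixed moments of Lemma~\ref{lem:mixed-block-moments} at $b=0$: the Brownian–clock side gives $\E[\ell^p]=\E[e^{p^2\theta^2V_0/\pi^2}]=H((p\theta/\pi)^2)$, which agrees.

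\textbf{Step 2: the bias.} The block density with bias $b$ is the unbiased one multiplied by $\ell^{\pi b/\theta}/Z(b)$. I will apply Girsanov to $e^{\pi b\Delta/\theta}$, where $\Delta=\tfrac{\sqrt2\theta}{\pi}B_{V}$. Conditional on $V$, this tilt has mean $e^{b^2V}$. Dividing by $e^{b^2V}$ shifts the Brownian drift to $\sqrt2 b\,v$, which in log-length is $\tfrac{2\theta b}{\pi}\cdot\tfrac{\theta}{\pi}\cdot v=-\tfrac{2(n-1)\theta^2}{\pi^2}v$, as in \eqref{lower:eq:block-log-process}. The leftover factor $e^{b^2V}/H(b^2)$ is exactly the clock tilt \eqref{lower:eq:tilted-clock-law}, with normalizer $H(b^2)=Z(b)$. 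The Girsanov change is applied on $[0,V]$ for the whole path, so the area functional transforms consistently.

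\textbf{Main obstacle.} The hardest part is Step 1's rigorous identification of the conditioned cone excursion, at the exact radii $1/\sin\theta$ and $\ell/\sin\theta$, with the strip picture in which the real part is independent of the lifetime. Conditioning on an excursion from a boundary point is a degenerate conditioning. I will handle it through the kernel $\mathsf K_\theta$: I will check that its image under $\log$, with the Jacobian and the time change, factorizes as a Gaussian kernel in the real part times the strip excursion-lifetime density, whose sine series in the imaginary direction gives the terms $(-1)^{j+1}j^2e^{-j^2\cdot}$. The Bessel series of $\mathsf K_\theta$ corresponds to this factorization via Poisson summation/Mellin. The Mellin moment check against \eqref{eq:mixed} is the fallback, since mixed moments $\E[\ell^pT^d]$ for all $p,d$ in the allowed range determine the joint law.
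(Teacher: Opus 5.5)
Your proposal is correct and follows the paper's proof essentially step for step: the conformal map to the strip, independence of the horizontal Brownian motion from the crossing time $U\overset{d}{=}2\theta^2V_0/\pi^2$ (identified through the $\sinh$ Laplace transform), and a Girsanov splitting of the bias $\ell^{-(n-1)}$ into a drift change plus the clock tilt that turns $V_0$ into $V_n$. Fix a few bookkeeping slips. The area constant in your opening display should be $\gamma^4/(16\sin\theta)$, as your own Step~1 computation gives. The factor $\sin^{-2}\theta$ is counted twice if the strip path already starts at real part $-\log\sin\theta$. The drift is $2\theta bv/\pi$, without the extra $\theta/\pi$. Also do not lean on the Mellin fallback: at fixed $\gamma$ the moments $\E[\ell^pT^d]$ are finite only for finitely many integers $d$, so they cannot by themselves determine the joint law.
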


\begin{proof}
We first derive the Brownian-clock representation of a single block.

\medskip

\noindent
\emph{Step 1: From the cone to the strip.}
Consider the conformal map
\[
z\longmapsto w=\log(z\sin\theta)
\]
from the cone \(\{z:0<\arg z<\theta\}\) onto the strip
\(\mathbb R\times(0,\theta)\). An unbiased cone excursion that starts at
radius \(1/\sin\theta\) on the lower side of the cone and ends at radius
\(\ell/\sin\theta\) on the upper side is mapped, in conformal time, to a
strip-crossing excursion.

Write the strip process as \((B_u,Y_u)\), where \(B\) is a standard
Brownian motion and \(Y\) is the vertical strip-crossing excursion. The
two coordinates are independent. Let \(U\) be the crossing time of the
vertical coordinate. Since the terminal point in the strip is
\(B_U+i\theta=\log\ell+i\theta\), we have
\begin{equation}\label{lower:eq:ell-strip}
\ell=e^{B_U}.
\end{equation}

The inverse conformal map is \(g(w)=e^w/\sin\theta\), and hence
\[
|g'(B_u+iY_u)|^2
=
\frac{e^{2B_u}}{\sin^2\theta}.
\]
The conformal change of clock therefore gives
\begin{equation}\label{lower:eq:cone-time}
T
=
\frac1{\sin^2\theta}
\int_0^Ue^{2B_u}\dd u.
\end{equation}

\medskip

\noindent
\emph{Step 2: The strip-crossing time.}
We claim that
\begin{equation}\label{lower:eq:strip-clock-law}
U\overset{d}=\frac{2\theta^2}{\pi^2}V_0.
\end{equation}
To prove this, let \(Y\) be a one-dimensional Brownian motion started at
\(y\in(0,\theta)\), and define
\[
\tau_0=\inf\{u\geq0:Y_u=0\},
\qquad
\tau_\theta=\inf\{u\geq0:Y_u=\theta\}.
\]
The strip-crossing excursion is obtained as the entrance limit
\(y\downarrow0\) of \(Y\) conditioned on \(\tau_\theta<\tau_0\).
Therefore,
\[
\E[e^{-\lambda U}]
=
\lim_{y\downarrow0}
\E_y\left[
e^{-\lambda\tau_\theta}
\,\middle|\,
\tau_\theta<\tau_0
\right].
\]

Let
\(q_\lambda(y)=
\E_y[e^{-\lambda\tau_\theta};\tau_\theta<\tau_0]\).
The one-dimensional Feynman--Kac formula gives
\[
\frac12q_\lambda''=\lambda q_\lambda
\quad\text{on }(0,\theta),
\qquad
q_\lambda(0)=0,
\qquad
q_\lambda(\theta)=1.
\]
Thus
\[
q_\lambda(y)
=
\frac{\sinh(y\sqrt{2\lambda})}
{\sinh(\theta\sqrt{2\lambda})}.
\]
Since \(\mathbb{P}_y(\tau_\theta<\tau_0)=y/\theta\), it follows that
\begin{equation}\label{lower:eq:strip-clock-laplace}
\E[e^{-\lambda U}]
=
\frac{\theta\sqrt{2\lambda}}
{\sinh(\theta\sqrt{2\lambda})}.
\end{equation}

On the other hand, independence of the \(E_j\)'s and Euler's product
formula give
\[
\E[e^{-tV_0}]
=
\prod_{j=1}^{\infty}\frac{j^2}{j^2+t}
=
\frac{\pi\sqrt t}{\sinh(\pi\sqrt t)}.
\]
Taking \(t=2\lambda\theta^2/\pi^2\), we obtain
\[
\E\left[
e^{-\lambda(2\theta^2V_0/\pi^2)}
\right]
=
\frac{\theta\sqrt{2\lambda}}
{\sinh(\theta\sqrt{2\lambda})}.
\]
Comparison with \eqref{lower:eq:strip-clock-laplace} proves
\eqref{lower:eq:strip-clock-law}.

\medskip

\noindent
\emph{Step 3: Incorporating the boundary bias.}
The block appearing at the \(n\)-th step of the recursion has index
\(k=N+1-n\). Its boundary weight is
\[
\ell_k^{-(N-k)}=\ell_k^{-(n-1)}.
\]
By \eqref{lower:eq:ell-strip},
\[
\ell_k^{-(n-1)}
=
e^{-(n-1)B_U}.
\]
Factor this weight as
\begin{equation}\label{lower:eq:girsanov-factor}
e^{-(n-1)B_U}
=
e^{-(n-1)B_U-\frac12(n-1)^2U}
e^{\frac12(n-1)^2U}.
\end{equation}
The first factor on the right is the exponential martingale that changes
the horizontal Brownian motion into a Brownian motion with drift
\(-(n-1)\). By \eqref{lower:eq:strip-clock-law}, the second factor is
\[
e^{\frac12(n-1)^2U}
=
\exp\left\{
\left(\frac{(n-1)\theta}{\pi}\right)^2V_0
\right\}.
\]
It therefore changes \(V_0\) into \(V_n\), as defined in
\eqref{lower:eq:tilted-clock-law}. Its normalizing constant is
\[
H\left(\left(\frac{(n-1)\theta}{\pi}\right)^2\right).
\]

After this change of measure, the horizontal coordinate in conformal
time is \(B_u-(n-1)u\). Using Brownian scaling and the substitution
\[
u=\frac{2\theta^2}{\pi^2}v,
\]
we obtain
\[
B_{2\theta^2v/\pi^2}
-(n-1)\frac{2\theta^2}{\pi^2}v
\overset{d}=
\frac{\sqrt2\,\theta}{\pi}B_v
-\frac{2(n-1)\theta^2}{\pi^2}v.
\]
Thus, if the logarithm of the accumulated boundary scale at the
beginning of the block is \(x\), the logarithmic scale during that block
is precisely the process \(X^{(n)}\) in
\eqref{lower:eq:block-log-process}. In particular,
\begin{equation}\label{lower:eq:block-endpoint}
x+\log\ell_k=X_{V_n}^{(n)}.
\end{equation}

Similarly, \eqref{lower:eq:cone-time} becomes
\begin{equation}\label{lower:eq:intrinsic-block-time}
T_k
=
\frac{2\theta^2}{\pi^2\sin^2\theta}
\int_0^{V_n}
\exp\left\{
2\left(
\frac{\sqrt2\,\theta}{\pi}B_v
-\frac{2(n-1)\theta^2}{\pi^2}v
\right)
\right\}
\dd v.
\end{equation}
Multiplying by the square \(e^{2x}\) of the accumulated boundary scale
gives
\begin{equation}\label{lower:eq:scaled-block-area}
e^{2x}T_k
=
\frac{2\theta^2}{\pi^2\sin^2\theta}
\int_0^{V_n}e^{2X_v^{(n)}}\dd v.
\end{equation}

\medskip

\noindent
\emph{Step 4: The Laplace coefficient and the recursion.}
Combining \eqref{lower:eq:scaled-block-area} with the coefficient in
\eqref{lower:eq:preliminary-recursion}, we obtain
\[
\frac{s\sin\theta}{2\gamma^2}e^{2x}T_k
=
\frac{s\theta^2}{\pi^2\gamma^2\sin\theta}
\int_0^{V_n}e^{2X_v^{(n)}}\dd v.
\]
Since \(\theta=\pi\gamma^2/4\),
\[
\frac{\theta^2}{\pi^2\gamma^2}
=
\frac{\gamma^2}{16}.
\]
Therefore,
\begin{equation}\label{lower:eq:block-laplace-factor}
\frac{s\sin\theta}{2\gamma^2}e^{2x}T_k
=
\frac{s\gamma^2}{16\sin\theta}
\int_0^{V_n}e^{2X_v^{(n)}}\dd v.
\end{equation}

Using \eqref{lower:eq:block-endpoint} and
\eqref{lower:eq:block-laplace-factor}, the conditional recursion
\eqref{lower:eq:preliminary-recursion} becomes
\[
f_n(x)
=
\E\left[
\exp\left\{
-\frac{s\gamma^2}{16\sin\theta}
\int_0^{V_n}e^{2X_v^{(n)}}\dd v
\right\}
f_{n-1}\left(X_{V_n}^{(n)}\right)
\right].
\]
By the definition of \(\mathcal K_n\), this is exactly
\(f_n=\mathcal K_nf_{n-1}\). Iterating the recursion proves
\eqref{lower:eq:transform-recursion}.
\end{proof}

The recursion index \(n\) increases from \(1\) to \(N\), while the
physical block index \(k=N+1-n\) decreases from \(N\) to \(1\). Thus the
innermost operator \(\mathcal K_1\) corresponds to the final block, whose
boundary bias is zero, while the outermost operator \(\mathcal K_N\)
corresponds to the first physical block.

\section{Hamilton--Jacobi equation for moments}\label{upper:sec:moment-recursion}

The exact recurrence is now available. We normalize its weights, identify
the limiting equation, and construct a comparison function. Convergence
is proved separately in Section~\ref{sec:convergence}.

\subsection{A normalization that controls all exponents}

Fix $\max\{W/2,q/2\}<B<1$. For all sufficiently small $\gamma$,
every weight $w_\theta(b,d)$ occurring in the recurrence satisfies
$|b+2h\theta/\pi|\le B$ for $0\le h\le d$, and also
$|b+(2d-1)\theta/\pi|\le B$.
Indeed, here $b=(2(j-d)-(m-1))\theta/\pi$, with
$1\le m\le N$ and $0\le d\le j\le n$, so all these quantities
lie between $-N\theta/\pi=-W/2$ and $2n\theta/\pi\to q/2$.

For $d\ge1$, the divided-difference identity proved in
Lemma~\ref{lem:mixed-block-moments}, applied to the inputs
$(b+2h\theta/\pi)^2$, $h=0,\ldots,d$, gives
\[
 w_\theta(b,d)=2\sum_{k\ge1}
 \frac{(-1)^{k+1}k^2}{\prod_{h=0}^d[k^2-(b+2h\theta/\pi)^2]}.
\]
The terms in this series alternate in sign. Their absolute values are
\[
 \frac{2k^2}{\prod_{h=0}^d[k^2-(b+2h\theta/\pi)^2]}
 =2k^{-2d}\prod_{h=0}^d
 \left(1-\frac{(b+2h\theta/\pi)^2}{k^2}\right)^{-1}.
\]
Every factor on the right is positive and nonincreasing as $k$
increases, and $k^{-2d}$ tends to zero. Thus these absolute values
decrease to zero. It follows that keeping only the $k=1$ term gives
an upper bound for the series, while keeping the $k=1$ and $k=2$
terms gives a lower bound.
Moreover, the absolute value of the $k=2$ term divided by that of
the $k=1$ term is
\[
 4\prod_{h=0}^d
 \frac{1-(b+2h\theta/\pi)^2}{4-(b+2h\theta/\pi)^2}
 \le 4\left(\frac14\right)^{d+1}
 =4^{-d}\le\frac14.
\]
Here each fraction is at most $1/4$, since
$0\le(b+2h\theta/\pi)^2<1$.
The lower bound is therefore at least $3/4$ of the positive $k=1$
term. Since that term equals
$2\prod_{h=0}^d[1-(b+2h\theta/\pi)^2]^{-1}$, we obtain
\begin{equation}\label{upper:eq:weight-product-bounds}
 \frac32\prod_{h=0}^d[1-(b+2h\theta/\pi)^2]^{-1}
 \le w_\theta(b,d)\le
 2\prod_{h=0}^d[1-(b+2h\theta/\pi)^2]^{-1}.
\end{equation}
This estimate is valid even when $d$ is of order $\gamma^{-2}$.

Define
\[
 g(b)=-\log(1-b^2),\qquad V(b)=\frac12\int_0^b g(v)\,dv.
\]
One block changes $b$ to $b+(2d-1)\theta/\pi$. Introduce
\[
 \widetilde w_\theta(b,d)
 =w_\theta(b,d)\exp\{-\frac{4}{\gamma^2}[V(b+(2d-1)\theta/\pi)-V(b)]\}.
\]
Here admissibility means that
$|b+2h\theta/\pi|\le B$ for $0\le h\le d$ and
$|b+(2d-1)\theta/\pi|\le B$.
Define
\[
 a_d(b)=(1-b^2)^{d-1/2}\frac{H^{(d)}(b^2)}{d!},
 \qquad A(b,z)=\sum_{d\ge0}a_d(b)z^d,
 \quad |b|<1,\quad 0<z<1.
\]

\begin{lemma}[Uniform weight bounds]\label{upper:lem:uniform-local-weights}
There are constants $0<c_B<C_B<\infty$ such that
\begin{equation}\label{upper:eq:uniform-transformed-weights}
 c_B\le\widetilde w_\theta(b,d)\le C_B
\end{equation}
for every admissible $b,d,\theta$.
\end{lemma}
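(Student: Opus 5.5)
By \eqref{upper:eq:weight-product-bounds}, it suffices to bound the normalized product
\[
 \exp\Bigl\{\sum_{h=0}^d g(b+2h\theta/\pi)
 -\tfrac4{\gamma^2}\bigl[V(b+(2d-1)\theta/\pi)-V(b)\bigr]\Bigr\}
\]
above and below by constants depending only on $B$. Write $\delta=2\theta/\pi=\gamma^2/2$, so that $4/\gamma^2=2/\delta$. Since $V'=g/2$, we have
\[
 \tfrac{2}{\delta}\bigl[V(b+(d-\tfrac12)\delta)-V(b)\bigr]
 =\frac1\delta\int_b^{b+(d-1/2)\delta}g(v)\,dv .
\]
So the task is to show that $\sum_{h=0}^d g(b+h\delta)$ and $\delta^{-1}\int_b^{b+(d-1/2)\delta}g$ differ by a bounded amount, uniformly in $d$, including $d$ of order $\gamma^{-2}$.

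Split the integral into the $d$ cells $[b+(h-\tfrac12)\delta,\,b+(h+\tfrac12)\delta]$ for $1\le h\le d-1$, plus the half-cell $[b,b+\delta/2]$. Each full cell is a midpoint rule centered at the node $b+h\delta$. The node $h=0$ is compared with the half-cell, and the node $h=d$ is left over.

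\textbf{Midpoint cells.} All the points involved lie in $[-B,B]$: the nodes are admissible, and cell endpoints lie between admissible nodes or $b+(2d-1)\theta/\pi$. On $[-B,B]$, $g$ is smooth with $|g''|\le C$. The midpoint error per cell is therefore $O(\delta^3)$, and after division by $\delta$ it is $O(\delta^2)$. Summing over $d\le 1/\delta$ cells gives a total $O(\delta)$, which is bounded.

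A subtlety is that a cell endpoint might exceed $B$ when $d$ is small. It still lies between consecutive nodes, though, so it remains inside $[-B,B]$ since the interval is convex. This follows because the nodes $b+h\delta$ are monotone in $h$ and the extreme ones are admissible.

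\textbf{Boundary terms.} The unmatched node term $g(b+d\delta)$ and the comparison of $g(b)$ with $\delta^{-1}\int_b^{b+\delta/2}g$ are each bounded by $2\sup_{[-B,B]}|g|=2\log\bigl(1/(1-B^2)\bigr)$. This is why the half-shift $(2d-1)\theta/\pi$ appears in the normalization and why its admissibility is assumed. For $d=0$ the integral runs over $[b,b-\delta/2]$, and the same bounds apply, now with the endpoint $b-\theta/\pi$ admissible. The total discrepancy is thus bounded by a constant $C'_B$. Combined with the factors $3/2$ and $2$ in \eqref{upper:eq:weight-product-bounds}, this yields $c_B=\tfrac32e^{-C'_B}$ and $C_B=2e^{C'_B}$.

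The main obstacle is uniformity in $d$ up to order $\gamma^{-2}$. It is handled by the second-order midpoint error: the sum of $d$ per-cell errors, each of size $O(\delta^2)$, is $O(d\delta^2)=O(\delta)$. A first-order Riemann comparison would only give an $O(1)$ error per cell and would blow up, which is why the half-cell offset matters.
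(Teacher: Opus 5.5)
Your argument is correct in substance and uses the same idea as the paper's proof. You reduce via \eqref{upper:eq:weight-product-bounds} to comparing $\sum_{h=0}^d g(b+h\delta)$ with $\delta^{-1}\int_b^{b+(d-1/2)\delta}g$. You then use that every point involved lies in $[-B,B]$ and that admissibility forces $d\delta\le2B$.

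The only difference is the quadrature. You use a second-order midpoint rule on half-offset cells. The paper uses the first-order estimate
\[
 \Bigl|\sum_{h=0}^d g(b+h\delta)-\frac1\delta\int_b^{b+d\delta}g\Bigr|
 \le\|g\|_\infty+\frac{d\delta}{2}\|g'\|_\infty
\]
and then moves the upper endpoint to $b+(d-\tfrac12)\delta$ at cost $\|g\|_\infty/2$.

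Consequently your closing claim that a first-order comparison ``would blow up'' is false. After division by $\delta$, the per-cell error is $O(\delta\|g'\|_\infty)$, not $O(1)$. There are at most $2B/\delta$ cells, so the total is $O(B\|g'\|_\infty)$, which is bounded and is all the lemma needs. Uniformity up to $d\asymp\gamma^{-2}$ comes from the bound on $d\delta$, not from second-order accuracy. Your midpoint rule buys an $O(\gamma^2)$ interior error that is never used, at the price of requiring $g''$.

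Likewise, the half-shift $(2d-1)\theta/\pi$ is not there to make this bound work. It is the change of the state $2u-t$ in one recursion step $(m-1,j-d)\to(m,j)$, chosen so that the normalization telescopes in \eqref{upper:eq:normalized-recursion}.

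One small correction: \eqref{upper:eq:weight-product-bounds} was derived only for $d\ge1$, and its lower bound fails at $d=0$. At $b=0$ it would assert $H(0)=1\ge3/2$. For $d=0$, argue directly from $w_\theta(b,0)=H(b^2)$, as the paper does. Since $(1-b^2)^{-1}\le H(b^2)\le2(1-b^2)^{-1}$, your bound on the normalizing exponent still finishes this case. However, your lower constant must be reduced, e.g.\ to $c_B=e^{-C'_B}$.
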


\begin{proof}
By \eqref{upper:eq:weight-product-bounds}, the logarithm of
$w_\theta(b,d)$ differs by a bounded amount from
$\sum_{h=0}^d g(b+2h\theta/\pi)$. For $d\geq1$, the elementary
Riemann-sum estimate is
\[
 \left|\sum_{h=0}^d g(b+2h\theta/\pi)
 -\frac{\pi}{2\theta}\int_b^{b+2d\theta/\pi}g(y)\,dy\right|
 \leq \|g\|_\infty+\frac{d\theta}{\pi}\|g'\|_\infty,
\]
where the norms are on $[-B,B]$. The right-hand side is bounded
uniformly because admissibility bounds $d\theta/\pi$.
Changing the upper integration endpoint to $b+(2d-1)\theta/\pi$
costs at most $\|g\|_\infty/2$. Hence the error between the sum and
\[
 \frac{4}{\gamma^2}\{V(b+(2d-1)\theta/\pi)-V(b)\}
\]
is bounded uniformly in $b,d,\theta$. This proves
\eqref{upper:eq:uniform-transformed-weights} for $d\geq1$.
For $d=0$, use $w_\theta(b,0)=H(b^2)$ and the bounded difference
$\frac{\pi}{\theta}\{V(b-\theta/\pi)-V(b)\}$.

\end{proof}

\begin{lemma}[Limits of the weights]\label{upper:lem:weight-limits}
For fixed $d$, locally uniformly in $|b|<1$,
\begin{equation}\label{upper:eq:local-weight-limit}
 \widetilde w_\theta(b,d)\longrightarrow a_d(b).
\end{equation}
On $|b|\le B$, the functions $a_d$ are bounded above and away from zero,
and the derivatives $(\log a_d)'$ are bounded, uniformly in $d$.
\end{lemma}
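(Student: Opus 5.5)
For fixed $d$, the divided difference $w_\theta(b,d)=H[b^2,(b+2\theta/\pi)^2,\ldots,(b+2d\theta/\pi)^2]$ has all its $d+1$ nodes converging to $b^2$ as $\theta\to0$. The plan is to show that the weight tends to the corresponding Taylor coefficient and that the exponential correction tends to the factor $(1-b^2)^{d-1/2}$. The two statements (limit, then uniform bounds) are handled separately.

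\emph{Limit of the weight.} $H$ is real analytic on $(-\infty,1)$. By the Hermite--Genocchi representation,
\[
 H[v_0,\ldots,v_d]=\int_{\Delta_d}H^{(d)}\Bigl(\sum_h\lambda_hv_h\Bigr)\,\dd\lambda,
\]
so $w_\theta(b,d)\to H^{(d)}(b^2)/d!$. The convergence is uniform for $|b|\le B'<1$, because $H^{(d)}$ is uniformly continuous on $[0,B'^2+\epsilon]$. For the exponential factor, put $\delta=(2d-1)\theta/\pi$ and recall $4/\gamma^2=\pi/\theta$. A first-order Taylor expansion gives
\[
 \frac{\pi}{\theta}\bigl[V(b+\delta)-V(b)\bigr]
 =\frac{\pi}{\theta}\Bigl[\tfrac12 g(b)\delta+O(\delta^2)\Bigr]
 =\frac{2d-1}{2}\,g(b)+O(\theta),
\]
uniformly on compact subsets of $(-1,1)$, since $g'$ is bounded there. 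Hence
\[
 \exp\Bigl\{-\frac{\pi}{\theta}[V(b+\delta)-V(b)]\Bigr\}\to(1-b^2)^{d-1/2}.
\]
Multiplying the two limits yields \eqref{upper:eq:local-weight-limit}.

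\emph{Uniform bounds on $a_d$.} The bounds on $a_d$ follow by letting $\theta\to0$ with $d$ fixed in the estimate \eqref{upper:eq:weight-product-bounds}. This gives
\[
 \tfrac32(1-b^2)^{-(d+1)}\le H^{(d)}(b^2)/d!\le 2(1-b^2)^{-(d+1)},
\]
so that
\[
 \tfrac32(1-b^2)^{-3/2}\le a_d(b)\le 2(1-b^2)^{-3/2}.
\]
These bounds are uniform in $d$ on $|b|\le B$. A cleaner way to see the same thing is through the partial fractions already established:
\[
 \frac{H^{(d)}(v)}{d!}=2\sum_{k\ge1}(-1)^{k+1}k^2(k^2-v)^{-(d+1)}\qquad(d\ge1).
\]
The alternating-series argument then gives the same two-sided bound.

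\emph{Uniform derivative bound.} This is the main obstacle, since a naive differentiation produces a factor $d$. Write
\[
 a_d(b)=\frac{2}{\sqrt{1-b^2}}\,(1-b^2)^{-1}\,\bigl(1+R_d(b)\bigr),\qquad
 R_d(b)=\sum_{k\ge2}(-1)^{k+1}k^2\left(\frac{1-b^2}{k^2-b^2}\right)^{d+1}.
\]
Then
\[
 (\log a_d)'=\frac{3b}{1-b^2}+\frac{R_d'}{1+R_d}.
\]
Since $1+R_d\ge3/4$, it remains to bound $R_d'$ uniformly in $d$. Differentiating a single term gives
\[
 (d+1)\,k^2\rho_k^{\,d}\,\rho_k',\qquad \rho_k=\frac{1-b^2}{k^2-b^2}\le\frac{1}{k^2-1}\le\frac13 .
\]
Here $|\rho_k'|\le C_B/k^2$ on $|b|\le B$. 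The prefactor $(d+1)$ is absorbed by $\rho_k^{\,d}\le 3^{-d}$, because $\sup_d (d+1)3^{-d}<\infty$. The remaining sum over $k$ converges uniformly, using $\rho_k^{\,d}\le k^{-2d}\cdot C$ for $d\ge2$, and finitely many small $d$ are handled directly. The case $d=0$, namely $a_0=H(b^2)/\sqrt{1-b^2}$, is smooth and is checked directly. This yields the uniform bound on $(\log a_d)'$.
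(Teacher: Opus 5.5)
Your proof is correct and follows essentially the paper's route. You take the fixed-$d$ limit of the divided difference and of the $V$-normalization, obtain the bounds on $a_d$ by passing to the limit in the weight product bounds, and get the derivative bound from $a_d=2(1-b^2)^{-3/2}\sum_{k\ge1}(-1)^{k+1}k^2\rho_k^{d+1}$ by dominating the $k\ge2$ terms and their derivatives uniformly in $d$; there, split $\rho_k^d=\rho_k^{d-1}\rho_k$, so that one factor absorbs $d+1$ and the other gives $k^{-2}$, rather than using $\rho_k^d$ twice. One small slip: the lower bound $\tfrac32(1-b^2)^{-3/2}\le a_d$ holds only for $d\ge1$ (e.g.\ $a_0(0)=1$), but $a_0$ is a single smooth positive function, so the conclusion is unaffected.
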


\begin{proof}
For fixed $d$, the inputs to the divided difference tend to $b^2$, while
the exponential normalization converges to $(1-b^2)^{d-1/2}$.  This
proves \eqref{upper:eq:local-weight-limit}.  The bounds for $a_d$ follow
by taking the fixed-$d$ limit in the uniform bounds above, with constants
independent of $d$.

For $d\geq1$, the resolvent representation gives
\[
 a_d(b)=2(1-b^2)^{-3/2}\sum_{k\geq1}(-1)^{k+1}k^2
 \left(\frac{1-b^2}{k^2-b^2}\right)^{d+1}.
\]
The $k=1$ factor inside the sum is identically one.  For $k\geq2$ and
$|b|\leq B$, both the terms and their $b$-derivatives are dominated by
a summable sequence, uniformly in $d\geq1$. Indeed, if
$R_k(b)=(1-b^2)/(k^2-b^2)$, then $R_k\leq k^{-2}$ and
$|R_k'|\leq C_B R_k$. Thus both $k^2R_k^{d+1}$ and
$k^2(d+1)R_k^d|R_k'|$ are bounded by $C_B/k^2$ for
$k\geq2$, $d\geq1$. Thus $a_d'$ is uniformly bounded.  The
uniform positive lower bound for $a_d$ then gives a uniform bound for
$(\log a_d)'$.  The case $d=0$ is a single smooth positive function.
This proves the asserted uniform derivative bound.
\end{proof}

Fix a common modulus of continuity $\omega_B(\delta)\downarrow0$ such that
\begin{equation}\label{upper:eq:uniform-log-continuity}
 \sup_{d\ge0}\sup_{\substack{|b|,|\widetilde b|\le B\\
 |b-\widetilde b|\le\delta}}
 |\log a_d(b)-\log a_d(\widetilde b)|\le\omega_B(\delta).
\end{equation}
Its existence follows from Lemma~\ref{upper:lem:weight-limits}.

\begin{lemma}[Sum of the limiting weights]\label{upper:lem:weight-sum}
For $|b|<1$ and $0<z<1$,
\begin{equation}\label{upper:eq:local-generating-function}
 A(b,z)=(1-b^2)^{-1/2}H(b^2+(1-b^2)z).
\end{equation}
\end{lemma}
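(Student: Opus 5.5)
The identity is a Taylor expansion of $H$ about $b^2$. Fix $|b|<1$ and $0<z<1$, and set $v_0=b^2$ and $\delta=(1-b^2)z$. Then $v_0+\delta=b^2+(1-b^2)z<1$, so the right-hand side of \eqref{upper:eq:local-generating-function} is defined.

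By definition,
\[
 A(b,z)=(1-b^2)^{-1/2}\sum_{d\ge0}\frac{H^{(d)}(b^2)}{d!}\,\delta^d .
\]
It therefore suffices to show that the Taylor series of $H$ at $v_0$ converges to $H(v_0+\delta)$ for $0\le\delta<1-v_0$.

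For this I use the product formula $H(v)=\prod_{k\ge1}(1-v/k^2)^{-1}$ recalled after \eqref{upper:eq:exact-recursion}. Equivalently, I can use the representation $H(v)=\E[e^{vV_0}]$ from \eqref{lower:eq:unbiased-clock-mgf}. This function is analytic on $\{|v|<1\}$ and extends meromorphically to $\mathbb C$. Its nearest singularity to $v_0\in[0,1)$ is the pole at $v=1$.

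Hence the Taylor series about $v_0$ has radius of convergence at least $1-v_0$. This is enough for the claim, since $0<\delta<1-v_0$.

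A positivity argument avoids complex analysis. Write
\[
 H(v_0+\delta)=\E\bigl[e^{v_0V_0}e^{\delta V_0}\bigr].
\]
Expand $e^{\delta V_0}$ as a power series and apply Tonelli. All terms are nonnegative and the total is finite, since $v_0+\delta<1$. Tonelli gives $\sum_d \delta^d\,\E[V_0^d e^{v_0V_0}]/d!$. Differentiation under the expectation identifies $\E[V_0^de^{v_0V_0}]$ with $H^{(d)}(v_0)$, which is justified by dominated convergence for $v_0<1$.

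Multiplying by $(1-b^2)^{-1/2}$ then gives \eqref{upper:eq:local-generating-function}. The only step needing care is the exchange of sum and expectation, and the moment-generating-function representation makes it immediate.
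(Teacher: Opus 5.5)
Your proof is correct and follows the paper's route: the paper's proof is exactly the Taylor expansion of $H$ at $b^2$ with increment $(1-b^2)z$. The paper leaves implicit that this Taylor series actually converges to $H(b^2+(1-b^2)z)$, and either of your justifications fills that gap: the nearest singularity of $H$ being the pole at $v=1$, or the Tonelli argument via $H(v)=\E[e^{vV_0}]$.
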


\begin{proof}
Taylor's formula for $H$ at $b^2$ gives
\begin{align*}
 \sum_{d\geq0}a_d(b)z^d
 &=(1-b^2)^{-1/2}
   \sum_{d\geq0}\frac{H^{(d)}(b^2)}{d!}
       \{(1-b^2)z\}^d,
\end{align*}
which is \eqref{upper:eq:local-generating-function}.
\end{proof}

\subsection{The limiting equation}

Define the normalized array
\[
 \widetilde Q_{m,j}
 =
 \exp\left\{-\frac4{\gamma^2}
 \left[V\left(\frac{(2j-m)\theta}{\pi}\right)-V(0)\right]\right\}
 Q_{m,j}.
\]
The normalization telescopes along the recurrence, giving the exact identity
\begin{equation}\label{upper:eq:normalized-recursion}
 \widetilde Q_{m,j}
 =
 \sum_{d=0}^j
 \widetilde w_\theta\left(
 \frac{(2(j-d)-(m-1))\theta}{\pi},d\right)
 \widetilde Q_{m-1,j-d}.
\end{equation}

Set $t=m\theta/\pi$, $u=j\theta/\pi$, and formally try
$\widetilde Q_{m,j}\approx e^{4G(t,u)/\gamma^2}$.
For fixed $d$, the ratio of the previous exponential to the current
one tends to $e^{-G_t-dG_u}$, and the weight tends to $a_d(2u-t)$.
Summing suggests
\begin{equation}\label{upper:eq:normalized-hj}
 G_t=\log A(2u-t,e^{-G_u}),\qquad G_u>0.
\end{equation}
The condition $G_u>0$ ensures convergence of the generating function.
Equivalently, $F(t,u)=G(t,u)+V(2u-t)$ satisfies
\begin{equation}\label{upper:eq:moment-hj}
 F_t=\log H\bigl((2u-t)^2+e^{-F_u}\bigr),\qquad
 (2u-t)^2+e^{-F_u}<1.
\end{equation}
This calculation only identifies a candidate equation; the discrete
comparison in Section~\ref{sec:convergence} will justify convergence.

\subsection{Construction of the comparison function}

We construct the candidate explicitly. In this subsection
$h(v)=H'(v)/H(v)$, and the variable $s$ parametrizes a characteristic.

For $-1<C<1$, set
\begin{align*}
 y_C(s)&=\frac1\pi\arccos(C\cos(\pi s)),\\
 b_C(s)&=-\frac{y_C(s)C\sin(\pi s)}
                   {\sqrt{1-C^2\cos^2(\pi s)}},&
 u_C(s)&=\frac{s+b_C(s)}2,\\
 z_C(s)&=y_C(s)^2-b_C(s)^2
       =\frac{y_C(s)^2(1-C^2)}{1-C^2\cos^2(\pi s)}.
\end{align*}
For $0<t<1$ and $0<u<1/2$, let $C=C(t,u)$ solve $u_C(t)=u$;
existence and uniqueness are proved below. Using this parameter, define
\begin{equation}\label{upper:eq:characteristic-integral}
 F(t,u)=\int_0^t
 \left[\log H(y_C(s)^2)-u_C'(s)\log z_C(s)\right]\,ds,
 \qquad G(t,u)=F(t,u)-V(2u-t).
\end{equation}
\begin{lemma}[Moment comparison function]\label{upper:lem:hj-candidate}
The parameter $C(t,u)$ is unique. The functions $F,G$ are real analytic
on $(0,1)\times(0,1/2)$ and satisfy
\eqref{upper:eq:moment-hj} and \eqref{upper:eq:normalized-hj}.
\end{lemma}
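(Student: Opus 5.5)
The proof is the method of characteristics for $F_t=\mathcal H(t,u,F_u)$ with $\mathcal H(t,u,p)=\log H((2u-t)^2+e^{-p})$, carried out in three steps: check that the explicit curves are characteristics, prove the endpoint map $C\mapsto u_C(t)$ is a bijection, and verify that the action integral \eqref{upper:eq:characteristic-integral} solves the equation.

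\textbf{Step 1: the explicit curves are characteristics.} Set $b=2u-t$, $z=e^{-F_u}$, $v=b^2+z$. The characteristic system for $F_t-\mathcal H=0$ is
\[
 u'=-\partial_p\mathcal H=zh(v),\qquad p'=\partial_u\mathcal H=4bh(v),
\]
so $z'=-zp'=-4bzh(v)$. Combining $b'=2u'-1$ with $v'=2bb'+z'$ gives $v'=-2b$. Since $\chi'(v)=-\pi^2/(2H(v))$, one gets $\frac{d}{ds}\chi(v)=\pi^2 b/H(v)$. Differentiating once more yields $\chi(v)''=-\pi^2\chi(v)$.

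For the boundary condition $G(0,u)=0$ take $v(0)=1$, so $\chi(v(0))=C$; this forces $\chi(v(s))=C\cos(\pi s)$, i.e.\ $v=y_C^2$. The displayed $b_C$ then follows from $b=H(v)\frac{d}{ds}\chi(v)/\pi^2$, using $H(y^2)=\pi y/\sin(\pi y)$. The identity $z_C=y_C^2-b_C^2$ and $u_C=(s+b_C)/2$ are then forced.

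I would verify by direct differentiation that these closed forms satisfy $u'=zh(v)$ and $z'=-4bzh(v)$. This is routine trigonometry: with $\cos(\pi y)=C\cos(\pi s)$, everything is rational in $\sin(\pi y)$, $\cos(\pi s)$ and $\sin(\pi s)$.

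\textbf{Step 2: uniqueness of $C(t,u)$ (the main obstacle).} I need that, for fixed $t\in(0,1)$, the map $C\mapsto u_C(t)$ is a strictly monotone real-analytic bijection from $(-1,1)$ onto $(0,1/2)$. The limits are direct:
\begin{itemize}
\item as $C\to\pm1$, $y_C$ tends to $s$ or $1-s$;
\item a careful computation of $b_C$ at these ends should give $u\to0$ and $u\to1/2$, possibly after checking which end corresponds to which.
\end{itemize}
For monotonicity, I would first try to show $\partial_C u_C(t)\neq0$ by regarding it as the Jacobi field $\partial_C(\cdot)$ along the characteristic flow. Linearizing the ODE system, a vanishing Jacobi field would be a focal point. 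Because $\chi(v)$ solves the harmonic equation with period $2$ and $t<1$, a Sturm-type argument should exclude a zero before $t=1$.

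If that fails, I fall back on explicit differentiation. Writing $c=\cos(\pi t)$ and $\sigma=\sin(\pi t)$, one has $\partial_C y=-c/(\pi\sin(\pi y))$, and $\partial_C b$ can be computed and its sign checked directly. This step is the delicate one, because the positivity must hold uniformly over $t<1$ and near the singular ends of $C$.

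\textbf{Step 3: the action integral solves the equation.} Analyticity of $C(t,u)$ follows from the analytic implicit function theorem once $\partial_C u_C\ne0$. The integrand of \eqref{upper:eq:characteristic-integral} is $\mathcal H-u'\log z=\mathcal H+u'p$, which is the standard Lagrangian along a characteristic.

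The classical characteristic argument then gives, on the region covered diffeomorphically by characteristics, $F_u=-\log z_C(t)$ and $F_t=\log H(y_C(t)^2)=\log H(b^2+z)$. This is \eqref{upper:eq:moment-hj}, with strict domain $v=y^2<1$.

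I would check these gradient identities concretely. Differentiate $F(t,u(C))$ along the curve and in $C$, and show
\[
 \partial_C\int_0^t\bigl[\log H(y^2)-u'\log z\bigr]\,ds=-\log z\,\partial_Cu_C(t).
\]
This uses integration by parts together with the characteristic equations, including the vanishing of the boundary term at $s=0$. At $s=0$ one has $u_C(0)=b_C(0)=0$ and $z\log z\to$ finite as $z\to0$; I would check that the $\log z$ singularity is integrable there.

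Finally, \eqref{upper:eq:normalized-hj} follows from the definition $G=F-V(2u-t)$. Since $V'(b)=-\frac12\log(1-b^2)$, Lemma~\ref{upper:lem:weight-sum} gives $\log A(b,e^{-G_u})=\log H(b^2+e^{-F_u})+\log$ factors that cancel exactly against the $V$ terms. The condition $G_u>0$ is equivalent to $b^2+e^{-F_u}<1$.
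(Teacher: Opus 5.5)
Your route is the paper's: verify that the explicit curves are characteristics, show that $C\mapsto u_C(t)$ is a bijection, and then use the first-variation identity $\partial_C\{\log H(b_C^2+z_C)+p_Cu_C'\}=\frac{d}{ds}\{p_C\,\partial_Cu_C\}$ with $\partial_Cu_C(0)=0$. Your Step 3 is exactly this.

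\textbf{The gap is Step 2.} It is the first assertion of the lemma, and the implicit-function step in Step 3 depends on it, but you never show $\partial_Cu_C(t)\neq0$. The Sturm heuristic does not do this as stated. The oscillator $\chi(v)''+\pi^2\chi(v)=0$ controls $\chi(v_C(s))=C\cos(\pi s)$. Its Jacobi field $\partial_C\chi(v_C(s))=\cos(\pi s)$ already vanishes at $s=1/2$, yet there is no focal point there. So ``period $2$ and $t<1$'' says nothing about the component you actually need, $\partial_Cu_C=\frac12\partial_Cb_C$, and you have not derived its linearized equation. Your fallback is a two-line computation, not a delicate one. With $x=C\cos(\pi t)$,
\[
 \partial_Cb_C(t)=-\frac{\sin(\pi t)}{\pi(1-x^2)^{3/2}}\bigl[\arccos x-x\sqrt{1-x^2}\bigr]<0,
\]
because the bracket vanishes at $x=1$ and has derivative $-2\sqrt{1-x^2}$. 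Combine this with $b_C(t)\to-t$ as $C\uparrow1$ and $b_C(t)\to1-t$ as $C\downarrow-1$. This gives an analytic bijection $C\mapsto u_C(t)$ onto $(0,1/2)$ with nonvanishing derivative. No uniformity in $t$ or near $C=\pm1$ is needed.

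\textbf{Step 1 fixes the family incorrectly.} ``Take $v(0)=1$, so $\chi(v(0))=C$'' is inconsistent, since $\chi(v)\to\cos\pi=-1$ as $v\uparrow1$, and $v=1$ is the edge where $H$ and $h$ blow up. The correct normalization is $u(0)=0$. Then $b(0)=0$ and $v'(0)=-2b(0)=0$, so $\frac{d}{ds}\chi(v)$ vanishes at $s=0$ while $\chi(v(0))=C$ stays free. Thus all characteristics fan out from the corner $(0,0)$. The condition $G(0,u)=0$ is not imposed as initial data on $t=0$; it is a limiting boundary value, proved separately in Lemma~\ref{upper:lem:hj-boundary-values}. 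This fan structure is exactly why the boundary term in your Step 3 vanishes. Since you do use $u_C(0)=b_C(0)=0$ there, the verification survives once the motivation is corrected.

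\textbf{No singularity at $s=0$.} Your worry about a $\log z$ singularity there is unfounded. We have $z_C(0)=(\arccos C/\pi)^2\in(0,1)$, and for fixed $C\in(-1,1)$, $z_C\ge y_C^2(1-C^2)>0$ on $[0,t]$. So the integrand is analytic up to $s=0$.
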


\begin{proof}
\emph{Characteristics and endpoints.}
The characteristic equations for \eqref{upper:eq:moment-hj}, with
$b=2u-s$ and $z=e^{-F_u}$, are
\[
 u'=zh(b^2+z),\qquad z'=-4bzh(b^2+z).
\]
They give $b'=2zh-1$ and, for $y=\sqrt{b^2+z}$,
$y'=-b/y$.
Using $1-2y^2h(y^2)=\pi y\cot(\pi y)$ yields
$(\cos(\pi y))''+\pi^2\cos(\pi y)=0$.
The initial condition $u(0)=0$ gives $b(0)=0$, hence $y'(0)=0$.
Solving this scalar equation gives the formulas defining $y_C,b_C,u_C,z_C$.
They also directly verify
\[
 u_C(0)=0,\qquad u_C'=z_Ch(y_C^2)>0,\qquad
 0<z_C<1-b_C^2.
\]

For fixed $0<t<1$, write $x=C\cos(\pi t)$. Differentiation gives
\[
 \partial_C b_C(t)=
 -\frac{\sin(\pi t)}{\pi(1-x^2)^{3/2}}
 \left[\arccos x-x\sqrt{1-x^2}\right]<0.
\]
The bracket is positive: its derivative is $-2\sqrt{1-x^2}$
and its limit at $x=1$ is zero.
Moreover, $b_C(t)\to-t$ as $C\uparrow1$ and
$b_C(t)\to1-t$ as $C\downarrow-1$.
Thus $C\mapsto u_C(t)$ is an analytic bijection onto $(0,1/2)$
with nonzero derivative. Its inverse is analytic.

\emph{Direct verification of the derivatives.}
Put $p_C=-\log z_C$, so that $p_C'=4b_Ch(y_C^2)$.
Differentiating the integrand in
\eqref{upper:eq:characteristic-integral} with respect to $C$ gives
\[
 \partial_C\{\log H(b_C^2+z_C)+p_Cu_C'\}
 =
 \frac{d}{ds}\{p_C\,\partial_Cu_C\}.
\]
Indeed, the terms containing $\partial_Cp_C$ cancel because
$u_C'=z_Ch(y_C^2)$, and the remaining terms use
$p_C'=4b_Ch(y_C^2)$.
Since $\partial_Cu_C(0)=0$, integration gives
$F_u(t,u_C(t))=p_C(t)$.
Differentiating along the characteristic then gives
$F_t=\log H(y_C^2)$.
These identities prove the unnormalized HJ equation. Subtracting
$V(2u-t)$ gives the normalized equation and the formula for $G_u$.
Analyticity follows from the integral and the analytic inverse $C(t,u)$.

\begin{equation}\label{upper:eq:characteristic-derivatives}
 F_u(t,u)=-\log z_C(t),\qquad F_t(t,u)=\log H(y_C(t)^2),
 \qquad
 G_u(t,u)=\log\frac{1-(2u-t)^2}{z_C(t)}>0.
\end{equation}
\end{proof}

\begin{lemma}[Boundary values of the moment comparison function]
\label{upper:lem:hj-boundary-values}
For $T<1$ and $R<1/2$, $G$ extends continuously to
$[0,T]\times[0,R]$, with
\begin{equation}\label{upper:eq:hj-boundaries}
 G(0,u)=0,\qquad G(t,0)=\int_0^t\log a_0(-x)\,dx.
\end{equation}
\end{lemma}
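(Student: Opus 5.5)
The plan is to avoid evaluating the characteristic integral \eqref{upper:eq:characteristic-integral} directly near the axes. Instead I will work with the derivative formulas \eqref{upper:eq:characteristic-derivatives} together with explicit asymptotics of the family $y_C,b_C,z_C$ at the two ends of the parameter range. Lemma~\ref{upper:lem:hj-candidate} shows that $C\mapsto u_C(t)$ is a decreasing bijection onto $(0,1/2)$. Hence $u\downarrow0$ corresponds to $C\uparrow1$, and for fixed $u>0$, $t\downarrow0$ forces $C\downarrow-1$: we need $b_C(t)=2u-t$ to stay near $2u$, while $b_C(s)=O(s)$ unless $1-C^2\to0$. I treat the two axes separately and then check joint continuity on the closed rectangle $[0,T]\times[0,R]$, including the corner.

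\emph{The $u=0$ axis.} Write $\varepsilon=1-C^2$. As $C\uparrow1$ we have $y_C(s)\to s$ and $b_C(s)\to-s$, uniformly for $s\in[0,T]$, and $z_C(s)=O(\varepsilon)$ uniformly there. Consequently $F_u=-\log z_C\to+\infty$ and $G_u\to+\infty$ uniformly, so $e^{-G_u}\to0$. The normalized equation \eqref{upper:eq:normalized-hj} gives $G_t=\log A(2u-t,e^{-G_u})$. Since $A(b,z)$ is continuous up to $z=0$ with $A(b,0)=a_0(b)$, this yields $G_t(t,u)\to\log a_0(-t)$ uniformly in $t\in[\delta,T]$ as $u\downarrow0$.

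To integrate in $t$ I need one anchor value. I will show that $G(t,u)\to0$ as $(t,u)\to(0,0)$, using the crude bound that both terms of the integrand of $F$ are integrable uniformly in $C$. Indeed, $|\log H(y_C^2)|$ is bounded on $y_C\le\max(T,R)+o(1)$. For the second term, $u_C'=z_Ch(y_C^2)$, so $u_C'\,|\log z_C|\lesssim z_C|\log z_C|$, which is bounded. Hence $|F(t,u)|=O(t)$, and $V(2u-t)\to V(0)=0$ as $(t,u)\to(0,0)$. Combining $G(t,u)\to0$ at the corner with the uniform convergence of $G_t$ gives $G(t,0^+)=\int_0^t\log a_0(-x)\,dx$.

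\emph{The $t=0$ axis.} Here I expect the main obstacle. Fix $u\in(0,R]$ and let $t\downarrow0$, so $C\downarrow-1$. I will use $G(t,u)=G(t,u')+\int_{u'}^{u}G_u(t,v)\,dv$ with $G_u=\log\frac{1-(2v-t)^2}{z_C(t)}$. The expansion $\cos^2(\pi t)=1-\pi^2t^2+O(t^4)$ gives
\[
z_C(t)\approx\frac{y_C^2\varepsilon}{\varepsilon+\pi^2t^2},\qquad
b_C(t)^2\approx\frac{y_C^2\pi^2t^2}{\varepsilon+\pi^2t^2}.
\]
Therefore $z_C+b_C^2=y_C(t)^2$ exactly, and $1-b_C^2-z_C=1-y_C(t)^2\to0$, because $y_C(t)=\frac1\pi\arccos(C\cos\pi t)\to1$. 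Thus $G_u\to\log1=0$ pointwise whenever $b$ stays in a compact subset of $(0,1)$.

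The delicate points are domination near $v\downarrow0$, where $C$ is not near $-1$, and the requirement that $1-b^2$ stays bounded below, which holds since $2v-t\le 2R<1$. For the first point, I use $G_u>0$ together with the first step: $G(t,u')$ is close to $G(0^+,0)=0$ once $u'$ and $t$ are small, and on $[u',u]$ the convergence $C\to-1$ is uniform. This gives $G(0^+,u)=0$, and the same uniform estimates give continuity in $u$ along the axis.

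Finally, joint continuity on the closed rectangle follows from these uniform one-sided limits, the interior analyticity from Lemma~\ref{upper:lem:hj-candidate}, and the bounds $0<G_u$, $|G_t|\le C$, the latter being uniform away from the corner by the weight bounds of Lemma~\ref{upper:lem:weight-limits}.
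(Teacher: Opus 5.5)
\textbf{Gap: the corner limit.} Your two axis arguments work once the corner limit is available.
\begin{itemize}
\item As $u\downarrow0$, $C(t,u)\uparrow1$ uniformly for $t\in[\delta,T]$, so $G_t=\log A(2u-t,e^{-G_u})\to\log a_0(-t)$ uniformly there.
\item As $t\downarrow0$, $C(t,v)\downarrow-1$ uniformly for $v\in[u',R]$, so $G_u\to0$ because $1-b_C^2-z_C=1-y_C^2\to0$.
\end{itemize}
Both arguments, however, are anchored at the corner through $G(\delta,u)$ and $G(t,u')$, and your proof that $G\to0$ at $(0,0)$ does not work.

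You assert $y_C\le\max(T,R)+o(1)$ and deduce that $\log H(y_C^2)$ and $h(y_C^2)$ are bounded uniformly in $C$. The endpoint constraint controls $|b_C|\le\max\{T,2R\}$, not $y_C=(b_C^2+z_C)^{1/2}$. Already $y_C(0)=\arccos(C)/\pi\to1$ as $C\downarrow-1$. This is exactly the regime $t\ll u$, which occurs arbitrarily close to the corner. There $H(y_C^2)$ and $h(y_C^2)$ blow up, and $u_C'=z_Ch(y_C^2)$ has average $u/t$ over $[0,t]$, so it is not $O(z_C)$.

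In fact your conclusion $|F(t,u)|=O(t)$, uniformly in $C$ as your argument would give it, contradicts the lemma itself. For each fixed $u>0$ the lemma gives $F(t,u)=G(t,u)+V(2u-t)\to V(2u)>0$ as $t\downarrow0$. So no bound $|F|\le Kt$ can hold, even for small $u$.

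A side point: your closing claim that $|G_t|$ is bounded away from the corner is false near the $t=0$ axis. There $e^{-G_u}\to1$ and $G_t=\log A(2u-t,e^{-G_u})\to+\infty$. You do not need that claim if you argue via uniform convergence along the axes.

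\textbf{What is missing.} You need an estimate on $G$ itself that is uniform over all characteristics $-1<C<1$. The paper gets it from a variational form of the integrand. With $b=b_C(s)$, $v=u_C'(s)$ and $\zeta=z_C(s)/(1-b^2)$, one has $G(t,u)=\int_0^t\{\log A(b,\zeta)-v\log\zeta\}\,ds$ and $v=\zeta A_z(b,\zeta)/A(b,\zeta)$. Hence $\zeta$ minimizes the convex function $\xi\mapsto\log A(b,\xi)-v\log\xi$. The paper then:
\begin{itemize}
\item compares with $\xi=v/(1+v)$, using $A(b,\xi)\le C_B/(1-\xi)$;
\item compares with $\xi=e^{-L_B}v/(1+v)$, using $a_d\le a_0e^{L_Bd}$;
\item applies Jensen's inequality with $\int_0^tv\,ds=u$.
\end{itemize}
This gives $0\le G(t,u)\le t\log C_B+(t+u)\log(t+u)-t\log t-u\log u$ and $|G(t,u)-\int_0^t\log a_0(-s)\,ds|\le L_Bu+(t+u)\log(t+u)-t\log t-u\log u$. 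These bounds settle the corner and both axes at once, so your separate derivative-based analysis of the axes becomes unnecessary.
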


\begin{proof}
We estimate only the explicit characteristic integral; no comparison
with other paths is required.
Along a characteristic write $b=b_C(s)$, $v=u_C'(s)>0$, and
$\zeta=z_C(s)/(1-b^2)\in(0,1)$. The identity for $A$ gives
\[
 v=\frac{\zeta A_z(b,\zeta)}{A(b,\zeta)},\qquad
 G(t,u)=\int_0^t
 \{\log A(b,\zeta)-v\log\zeta\}\,ds.
\]
The latter follows by subtracting
$\frac{d}{ds}V(b_C(s))=g(b_C(s))(v-1/2)$ from the integrand of $F$.

The function $\log A(b,e^p)$ is convex in $p<0$.
The displayed identity for $v$ therefore says that $\zeta$ minimizes
$\log A(b,\xi)-v\log\xi$ over $0<\xi<1$.
For endpoints in $[0,T]\times[0,R]$, monotonicity of $u_C$ implies
$-T\leq b_C(s)\leq2R$, a compact subinterval of $(-1,1)$.
On this state interval, the coefficient bounds imply
$A(b,\xi)\leq C_B/(1-\xi)$ and
$a_d(b)\leq a_0(b)e^{L_Bd}$ for a constant $L_B$.
Evaluating at $\xi=v/(1+v)$ and
$\xi=e^{-L_B}v/(1+v)$, respectively, gives
\begin{align*}
 0\leq\log A(b,\zeta)-v\log\zeta
 &\leq\log C_B+(1+v)\log(1+v)-v\log v,\\
 0\leq\log A(b,\zeta)-v\log\zeta-\log a_0(b)
 &\leq L_Bv+(1+v)\log(1+v)-v\log v.
\end{align*}
The lower bounds use $A(b,\zeta)\geq a_0(b)\geq1$.

The characteristic has $\int_0^t v\,ds=u$ and
$0\leq u_C(s)\leq u$. Concavity of
$(1+v)\log(1+v)-v\log v$ and Jensen's inequality imply
\[
 0\leq G(t,u)\leq
 t\log C_B+(t+u)\log(t+u)-t\log t-u\log u.
\]
Also $|b_C(s)+s|\leq2u$, so smoothness of $\log a_0$ gives
\[
 \left|G(t,u)-\int_0^t\log a_0(-s)\,ds\right|
 \leq L_Bu+(t+u)\log(t+u)-t\log t-u\log u.
\]
Here and below $0\log0=0$. The first bound tends uniformly to zero
as $t\downarrow0$ with $u$ bounded; the second tends uniformly
to zero as $u\downarrow0$ with $t$ bounded.
They agree at the origin and prove the asserted continuous extension.
\end{proof}

\section{Hamilton--Jacobi equation for the Laplace transform}\label{lower:sec:hj}

\subsection{Formal derivation of the Hamilton--Jacobi equation}

We formally derive the Hamilton--Jacobi equation suggested by the exact
recursion. Set \(t_n=(n-1)\theta/\pi\). Since
\(f_n=\mathcal K_nf_{n-1}\), we make the ansatz
\[
f_{n-1}(x)
\approx
\exp\left\{\frac{4}{\gamma^2}\mathcal G(t_n,x)\right\}.
\]
Notice that
\(t_{n+1}-t_n=\theta/\pi=\gamma^2/4\).

Recall that
\[
X_v^{(n)}
=
x+\frac{\sqrt2\,\theta}{\pi}B_v
-\frac{2(n-1)\theta^2}{\pi^2}v
=
x+\frac{\gamma^2}{2\sqrt2}B_v
-\frac{\gamma^2t_n}{2}v.
\]
Applying \(\mathcal K_n\) to the exponential ansatz gives
\begin{align}
&\frac{
\mathcal K_n
\exp\left\{\frac{4}{\gamma^2}\mathcal G(t_n,\cdot)\right\}(x)
}{
\exp\left\{\frac{4}{\gamma^2}\mathcal G(t_n,x)\right\}
}
\notag\\
&\quad=
\E\left[
\exp\left\{
-\frac{s\gamma^2}{16\sin\theta}
\int_0^{V_n}e^{2X_v^{(n)}}\dd v
+
\frac{4}{\gamma^2}
\left(
\mathcal G(t_n,X_{V_n}^{(n)})
-\mathcal G(t_n,x)
\right)
\right\}
\right].
\label{lower:eq:formal-operator-ratio}
\end{align}

As \(\gamma\to0\), we have
\(s\gamma^2/(16\sin\theta)\to s/(4\pi)\). Moreover,
\(X_v^{(n)}-x=O(\gamma^2)\) on bounded clock intervals, so formally
\[
\frac{s\gamma^2}{16\sin\theta}
\int_0^{V_n}e^{2X_v^{(n)}}\dd v
\approx
\frac{s}{4\pi}e^{2x}V_n.
\]
A first-order Taylor expansion also gives
\[
\frac{4}{\gamma^2}
\left(
\mathcal G(t_n,X_{V_n}^{(n)})
-\mathcal G(t_n,x)
\right)
\approx
\sqrt2\,\mathcal G_x(t_n,x)B_{V_n}
-2t_n\mathcal G_x(t_n,x)V_n.
\]

Write \(p=\mathcal G_x(t_n,x)\). Conditional on \(V_n=v\),
\(\E[e^{\sqrt2pB_v}]=e^{p^2v}\). Therefore,
\[
\frac{
\mathcal K_n
\exp\left\{\frac{4}{\gamma^2}\mathcal G(t_n,\cdot)\right\}(x)
}{
\exp\left\{\frac{4}{\gamma^2}\mathcal G(t_n,x)\right\}
}
\approx
\E\left[
e^{\left(p^2-2t_np-\frac{s}{4\pi}e^{2x}\right)V_n}
\right].
\]
Since
\[
p^2-2t_np-\frac{s}{4\pi}e^{2x}
=
(p-t_n)^2-t_n^2-\frac{s}{4\pi}e^{2x},
\]
the clock moment-generating formula yields
\begin{equation}\label{lower:eq:formal-one-step-limit}
\frac{
\mathcal K_n
\exp\left\{\frac{4}{\gamma^2}\mathcal G(t_n,\cdot)\right\}(x)
}{
\exp\left\{\frac{4}{\gamma^2}\mathcal G(t_n,x)\right\}
}
\approx
\frac{
H\left(
(\mathcal G_x(t_n,x)-t_n)^2
-\frac{s}{4\pi}e^{2x}
\right)
}{
H(t_n^2)
}.
\end{equation}

On the other hand, the recursion predicts that the same ratio is
approximately
\[
\exp\left\{
\frac{4}{\gamma^2}
\left(
\mathcal G(t_{n+1},x)-\mathcal G(t_n,x)
\right)
\right\}
\approx
e^{\mathcal G_t(t_n,x)},
\]
because \(t_{n+1}-t_n=\gamma^2/4\). Matching this expression with
\eqref{lower:eq:formal-one-step-limit} gives
\begin{equation}\label{lower:eq:comparisonpde}
\boxed{
\mathcal G_t(t,x)
=
\log H\left(
(\mathcal G_x(t,x)-t)^2
-\frac{s}{4\pi}e^{2x}
\right)
-\log H(t^2).
}
\end{equation}
The initial condition is \(\mathcal G(0,x)=0\), since \(f_0(x)=1\).

Finally, the clock moment-generating function is finite only when its
argument is smaller than \(1\). We therefore seek a solution satisfying
\begin{equation}\label{lower:eq:strictdomain}
(\mathcal G_x(t,x)-t)^2
-\frac{s}{4\pi}e^{2x}
<1.
\end{equation}
This derivation is formal because \(V_n\) is unbounded. The comparison
argument below will use \eqref{lower:eq:strictdomain} to justify the
one-step approximation and control the large-clock contribution.

\subsection{Construction of the comparison function}

Fix $s>0$. The function $h=H'/H$ and the inverse $\chi^{-1}$ were
defined in Section~\ref{sec:common-blocks}. Define
$\mathcal H(x,p)=-x+\log H(p^2-\frac{s}{4\pi}e^{2x})$.
For $C>1$ and times with $C\cos(\pi t)>-1$, set
\begin{align}
 v_C(t)&=\chi^{-1}(C\cos(\pi t)),\label{lower:eq:charv}\\
 p_C(t)&=-\frac{C\sin(\pi t)H(v_C(t))}{\pi},\label{lower:eq:charp}\\
 z_C(t)&=v_C(t)-p_C(t)^2
       =\frac{1-C^2}{\pi^2}H(v_C(t))^2,\label{lower:eq:charz}\\
 x_C(t)&=\frac12\log\frac{-4\pi z_C(t)}s.\label{lower:eq:charx}
\end{align}
For each $(t,x)$ with $0\le t<1$, choose $C$ so that $x_C(t)=x$;
the following lemma proves that this choice is unique.
Define $\mathcal U$ and then $\mathcal G$ by
\[
 \mathcal U(t,x_C(t))=\int_0^t
 \{\mathcal H(x_C(a),p_C(a))+p_C(a)x_C'(a)\}\dd a.
\]
\[
 \mathcal G(t,x)=\mathcal U(t,x)+tx-\int_0^t\log H(a^2)\,\dd a.
\]

\begin{lemma}[Laplace comparison function]\label{lower:lem:hj-candidate}
The parameter $C$ is unique and $\mathcal G$ is smooth on
$[0,1)\times\mathbb R$. It solves \eqref{lower:eq:comparisonpde},
has initial value zero, and satisfies \eqref{lower:eq:strictdomain}.
\end{lemma}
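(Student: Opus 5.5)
The plan mirrors the proof of Lemma~\ref{upper:lem:hj-candidate}: verify that the explicit curves are characteristics of $\mathcal H$, show that the endpoint map $C\mapsto x_C(t)$ is a bijection onto $\mathbb R$, and then read off $\mathcal U_x$ and $\mathcal U_t$ from the action integral by the standard first-variation identity.

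\emph{Step 1: characteristics.} The shifted function $\mathcal U=\mathcal G-tx+\int_0^t\log H(a^2)\,\dd a$ satisfies $\mathcal U_x=\mathcal G_x-t=:p$. Substituting into \eqref{lower:eq:comparisonpde} turns it into the autonomous equation
\[
 \mathcal U_t=-x+\log H\bigl(\mathcal U_x^2-\tfrac{s}{4\pi}e^{2x}\bigr)=\mathcal H(x,\mathcal U_x).
\]
The corresponding Hamilton system is $x'=-\mathcal H_p=-2ph(v)$ and $p'=\mathcal H_x=-1-\frac{s}{2\pi}e^{2x}h(v)$, where $v=p^2-\frac{s}{4\pi}e^{2x}$. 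These equations give $v'=-2p$. Using $\chi'=-\pi^2/(2H)$ together with the identity relating $h$ and $\chi$, which is the same identity used in the moment case, I will check that $\chi(v(t))''+\pi^2\chi(v(t))=0$.

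The initial condition $\mathcal U(0,x)=0$ gives $p(0)=0$, hence $v'(0)=0$, and therefore $\chi(v)=C\cos(\pi t)$. At $t=0$ we have $v=-\frac{s}{4\pi}e^{2x}<0$, so $C=\chi(v(0))>1$. I will then verify directly that the formulas \eqref{lower:eq:charv}--\eqref{lower:eq:charx} solve the Hamilton system. This amounts to checking that $p_C$ is consistent with $v'=-2p$, which follows from differentiating $\chi(v_C)=C\cos(\pi t)$, and that $z_C=-\frac{s}{4\pi}e^{2x_C}$. The latter holds by definition once $z_C<0$, and $z_C<0$ is automatic since $C>1$. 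Since $v_C<1$ by construction, the strict domain \eqref{lower:eq:strictdomain} holds along every characteristic.

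\emph{Step 2: the endpoint map.} For fixed $t\in[0,1)$, I will show that $C\mapsto x_C(t)$ is a strictly monotone bijection from the admissible interval of $C$ onto $\mathbb R$. Note that $C\cos(\pi t)>-1$ is automatic when $t\le1/2$, and restricts $C<-1/\cos(\pi t)$ when $t>1/2$.

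Here $x_C(t)=\frac12\log\bigl(\frac{4(C^2-1)}{\pi s}H(v_C)^2\bigr)$. I will differentiate $\log(C^2-1)+2\log H(\chi^{-1}(C\cos\pi t))$ in $C$ and aim to show the result is positive. When $\cos(\pi t)\ge0$ both terms increase, since $\chi^{-1}$ is decreasing and $H$ is increasing in the argument's decrease for $v<0$; I must track the sign carefully. When $\cos(\pi t)<0$, the second term decreases, and positivity has to come from an explicit comparison.

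For the limits, $C\downarrow1$ gives $x_C\to-\infty$. At the other end, either $C\to\infty$ or $C\cos(\pi t)\downarrow-1$, which means $v_C\uparrow1$, $H\to\infty$, and $x_C\to\infty$.

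I expect the monotonicity for $t>1/2$ to be the main obstacle. My first attempt will be to rewrite the derivative in terms of $y=\pi\sqrt{v}$, or $\pi\sqrt{-v}$, obtaining an elementary inequality analogous to $\arccos x-x\sqrt{1-x^2}>0$ in the moment lemma. Failing that, I will use the Jacobian $\partial_Cx_C$ along the flow, which solves a linearized equation and starts at $\partial_Cx_C(0)>0$; I will argue that it cannot vanish before $t=1$, for instance via the explicit formula for $\chi$.

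\emph{Step 3: derivatives and regularity.} With the inverse $C(t,x)$ smooth by the implicit function theorem, I will differentiate the action integral in $C$ exactly as in Lemma~\ref{upper:lem:hj-candidate}. The Hamilton equations give $\partial_C\{\mathcal H+px'\}=\frac{\dd}{\dd a}\{p\,\partial_Cx\}$. Since $p_C(0)=0$, this yields $\mathcal U_x(t,x_C(t))=p_C(t)$. Differentiating along the curve then gives $\mathcal U_t=\mathcal H(x,p)$, so $\mathcal G$ solves \eqref{lower:eq:comparisonpde}, and $\mathcal G(0,\cdot)=0$ holds trivially. Smoothness on $[0,1)\times\mathbb R$ follows from the smooth dependence of the integrand on $(a,C)$ together with the smooth inverse. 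At $t=0$ the map $x_C(0)=\frac12\log(4(C^2-1)/(\pi s)\,H(\chi^{-1}(C))^2)$ is already a diffeomorphism, so there is no boundary degeneration.
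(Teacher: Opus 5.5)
\textbf{Step 2 has a sign error, and it sits exactly where the uniqueness of $C$ is proved.} Your Steps 1 and 3 match the paper's proof: the reduction to $\mathcal U_t=\mathcal H(x,\mathcal U_x)$, the relation $\chi(v_C(t))=C\cos(\pi t)$, and the first-variation formula with free initial point and $p_C(0)=0$. But in Step 2 you have the monotonicity reversed in both regimes.

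Since $\chi^{-1}$ is decreasing and $H$ is increasing, the signs are as follows:
\begin{itemize}
\item For $\cos(\pi t)>0$, raising $C$ raises $\chi(v_C)=C\cos(\pi t)$. It therefore \emph{lowers} $v_C(t)$ and $H(v_C(t))$.
\item For $\cos(\pi t)<0$, raising $C$ raises both $v_C(t)$ and $H(v_C(t))$.
\end{itemize}
So $t>1/2$ is the trivial case: both factors of $\pi^2(-z_C)=(C^2-1)H(v_C)^2$ increase, and $v_C\uparrow1$ at the end of the admissible interval. At $t=1/2$ one gets $(C^2-1)/4$. The genuinely competitive regime is $0\le t<1/2$, and there your proposal gives no valid argument.

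Your surjectivity claim also fails in that regime. As $C\to\infty$ one has $v_C\to-\infty$ and $H(v_C)\to0$, not $H\to\infty$. The Jacobian fallback would have to prove precisely that $\partial_Cx_C(t)$ never vanishes, and this is not carried out, so it does not close the gap.

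\textbf{What is needed for $0<t<1/2$.} This is what the paper does. Put $\vartheta=\pi t$ and treat the two branches of $v$ separately.
\begin{itemize}
\item On the branch $v\ge0$, set $a=\pi\sqrt v$. Then $\cos a=C\cos\vartheta$, and $a\in(0,\vartheta)$ decreases as $C$ increases. One has $\pi^2(-z_C)=a^2(\tan^2\vartheta\cot^2a-1)$. This is strictly decreasing in $a$, because $\cot a-a\csc^2a<0$.
\item On the branch $v<0$, set $a=\pi\sqrt{-v}$. Then $\cosh a=C\cos\vartheta$, and $a$ increases with $C$. One has $\pi^2(-z_C)=a^2(1+\tan^2\vartheta\coth^2a)$. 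This is strictly increasing in $a$, because $\coth a-a\operatorname{csch}^2a>0$.
\end{itemize}
The second formula also gives $-z_C\to\infty$ as $C\to\infty$. The two branches join with nonzero $C$-derivative at $v=0$, which is what supplies the smooth inverse you invoke in Step 3.

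At $t=0$, the identity $(C^2-1)H(\chi^{-1}(C))^2=(\operatorname{arcosh}C)^2$ makes the endpoint map explicit. Your assertion that $x_C(0)$ is a diffeomorphism is therefore correct. It does not, however, follow from your ``both terms increase'' reasoning, which is false at $t=0$.
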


\begin{proof}
The substitution defining $\mathcal G$ turns the desired equation into
$\mathcal U_t=\mathcal H(x,\mathcal U_x)$.
Its characteristic equations, with initial momentum $p(0)=0$, are
\[
 x'=-2p h(v),\qquad
 p'=-1-\frac{s}{2\pi}e^{2x}h(v),\qquad
 v=p^2-\frac{s}{4\pi}e^{2x}.
\]
They imply $v'=-2p$ and
$(\chi(v(t)))''+\pi^2\chi(v(t))=0$.
The initial derivative of $\chi(v(t))$ is zero, so
$\chi(v_C(t))=C\cos(\pi t)$. Differentiating this identity gives
\eqref{lower:eq:charp}; the remaining formulas follow from
$v=p^2-\frac{s}{4\pi}e^{2x}$.
At $t=0$, $v_C(0)=-(\operatorname{arcosh}C/\pi)^2$ and $p_C(0)=0$.

For each $0\le t<1$, the map $C\mapsto-z_C(t)$ is strictly increasing
from zero to infinity on its permitted interval. For $t>1/2$ this follows
because $v_C(t)$ and $H(v_C(t))$ both increase with $C$; for $t=1/2$,
$-z_C(t)=(C^2-1)/4$. For $0<t<1/2$, write $\vartheta=\pi t$.
On the positive-$v$ branch, with $a=\pi\sqrt v\in(0,\vartheta)$,
\[
 -z=\frac{a^2}{\pi^2}
       \{\tan^2\vartheta\cot^2a-1\}.
\]
Its derivative in $a$ is negative because
$\cot a-a\csc^2a<0$. On the negative-$v$ branch, with
$a=\pi\sqrt{-v}>0$,
\[
 -z=\frac{a^2}{\pi^2}
       \{1+\tan^2\vartheta\coth^2a\},
\]
whose derivative is positive because $\coth a-a\operatorname{csch}^2a>0$.
The branches join analytically with nonzero derivative at $v=0$.
The case $t=0$ follows directly from $\operatorname{arcosh}C$.

Consequently, the curves cover each $(t,x)$ exactly once and the
parameter depends smoothly on the endpoint.
In the defining integral for $\mathcal U$, the initial endpoint is
free and its momentum is zero. The first-variation formula therefore
gives $\mathcal U_x=p_C(t)$ and
$\mathcal U_t=\mathcal H(x,\mathcal U_x)$.
The definition of $\mathcal G$ now gives the HJ equation and
$\mathcal G(0,x)=0$.
Its argument of $H$ is $v_C(t)<1$, proving the strict-domain condition.
\end{proof}

In particular, on each $[0,W/2]\times[-M,M]$, all required derivatives
are bounded and the argument of $H$ is at most $1-\delta_M$ for some
$\delta_M>0$. These are the compact bounds used in the convergence proof.

\section{Convergence of the moment recursion}\label{sec:convergence}

Lemmas~\ref{upper:lem:hj-candidate} and~\ref{upper:lem:hj-boundary-values}
supply the comparison function $G$ and its boundary values.
We now compare its exponential with the exact normalized recurrence.

Two estimates suffice. Away from the axes, the exponential nearly
satisfies one recurrence step. Near the axes, the discrete array
approaches the boundary values of $G$.
Positivity propagates the resulting bounds by induction.

\subsection{One-step comparison}

Fix $W/2<T<1$, $q/4<R<1/2$, and then a sufficiently small $\eta>0$,
so that all shifted grid points considered below lie in
$[\eta,T]\times[\eta,R]$.
For $t=m\theta/\pi$, $u=j\theta/\pi$, define
\begin{align}
 R_{m,j}^{\eta}
 =\sum_{d=0}^j&
 \widetilde w_\theta\left(
 2u-t+(1-2d)\frac{\theta}{\pi},d\right)
 \notag\\[-2pt]
 &\times\exp\left\{\frac4{\gamma^2}
 \left[
 G\left(t-\frac{\theta}{\pi}+\eta,
        u-\frac{d\theta}{\pi}+\eta\right)
 -G(t+\eta,u+\eta)
 \right]\right\}.
 \label{upper:eq:hj-test-ratio}
\end{align}
This is the ratio between applying the exact recursion to the previous
row of the test array and its value at the current grid point.

Choose $B<1$ containing all actual and shifted values of $2u-t$.

\begin{lemma}[Uniform one-step bounds]\label{upper:lem:hj-consistency}
For fixed $\eta>0$, there are numbers
$\varepsilon_{\gamma,\eta}\geq0$ tending to zero as
$\gamma\downarrow0$ such that, uniformly for $2\leq m\leq N$ and
$0\leq j\leq n$,
\[
 R_{m,j}^{\eta}\leq
 \exp\{\omega_B(\eta)+\varepsilon_{\gamma,\eta}\}.
\]
If also $j\theta/\pi\geq\eta$, then
\[
 R_{m,j}^{\eta}\geq
 \exp\{-\omega_B(\eta)-\varepsilon_{\gamma,\eta}\}.
\]
\end{lemma}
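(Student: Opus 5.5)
The plan is to split the sum in \eqref{upper:eq:hj-test-ratio} at a fixed cutoff $K$. The head $d\le K$ is handled by Taylor expansion. The tail $d>K$ is bounded by a geometric series built from the uniform weight bounds of Lemma~\ref{upper:lem:uniform-local-weights}. We let $\gamma\downarrow0$ with $K$ fixed, and only then send $K\to\infty$. All evaluation points $(t-\theta/\pi+\eta,\,u-d\theta/\pi+\eta)$ of $G$ with $d\le j$ have second coordinate at least $\eta$ and first coordinate at least $\eta$ (since $m\ge2$). They therefore lie in the compact rectangle $[\eta,T]\times[\eta,R]$, where $G$ is real analytic by Lemma~\ref{upper:lem:hj-candidate}. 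On this rectangle, $G_u\ge p_\eta>0$ by \eqref{upper:eq:characteristic-derivatives} and continuity, and $|G_t|\le M_\eta$. Moreover the second derivatives of $G$ are bounded by some $L_\eta$.

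\emph{Tail.} Since $G(\cdot,u)$ is monotone in $u$ with slope at least $p_\eta$, the mean value theorem gives
\[
\tfrac4{\gamma^2}\bigl[G(t-\tfrac\theta\pi+\eta,u-\tfrac{d\theta}\pi+\eta)-G(t+\eta,u+\eta)\bigr]\le M_\eta-p_\eta d,
\]
using $4\theta/(\pi\gamma^2)=1$. Combined with $\widetilde w_\theta\le C_B$, the terms with $d>K$ contribute at most
\[
C_Be^{M_\eta}\sum_{d>K}e^{-p_\eta d},
\]
which is smaller than any prescribed $\delta$ once $K=K(\eta,\delta)$ is large. This bound is uniform in $m,j,\gamma$.

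\emph{Head.} For fixed $d\le K$, a second-order Taylor expansion of $G$ shows that the exponent equals
\[
-G_t(t+\eta,u+\eta)-dG_u(t+\eta,u+\eta)+O(L_\eta K^2\gamma^2).
\]
By Lemma~\ref{upper:lem:weight-limits}, the weight $\widetilde w_\theta(2u-t+(1-2d)\theta/\pi,d)$ converges to $a_d(2u-t)$, uniformly in $|2u-t|\le B$ for each fixed $d$. Hence the head converges, uniformly in $(m,j)$, to $\sum_{d\le\min(K,j)}a_d(b)e^{-G_t-dG_u}$, where $b=2u-t$ is the unshifted argument. The HJ equation \eqref{upper:eq:normalized-hj} is however satisfied at the shifted argument $b_\eta=2(u+\eta)-(t+\eta)=b+\eta$, where $\sum_{d\ge0}a_d(b_\eta)e^{-G_t-dG_u}=1$. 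The modulus \eqref{upper:eq:uniform-log-continuity} gives $a_d(b)\le e^{\omega_B(\eta)}a_d(b_\eta)$ for all $d$. Therefore the full infinite sum at $b$ is at most $e^{\omega_B(\eta)}$, which yields the upper bound after absorbing $\delta$ and the Taylor and weight errors into $\varepsilon_{\gamma,\eta}$. These errors are chosen so that they vanish as $\gamma\downarrow0$; since $\delta$ is arbitrary, a diagonal choice $K=K(\gamma)\to\infty$ slowly makes this explicit.

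\emph{Lower bound.} The condition $j\theta/\pi\ge\eta$ is used here. The head then contains all $d\le\min(K,j)$, and $j\ge\eta\pi/\theta\to\infty$, so it includes every $d\le K$ once $\gamma$ is small. Using $a_d(b)\ge e^{-\omega_B(\eta)}a_d(b_\eta)$ and the identity $\sum_d a_d(b_\eta)e^{-G_t-dG_u}=1$, the head is at least $e^{-\omega_B(\eta)}$ minus the tail $\sum_{d>K}a_d(b_\eta)e^{-G_t-dG_u}$. That tail is small because of the uniform bound on $a_d$ and $G_u\ge p_\eta$. Dropping the nonnegative terms with $d>K$ gives the lower bound.

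The main obstacle is uniformity: the Taylor remainder, the weight convergence, and the tail must all be controlled uniformly over $2\le m\le N$, $0\le j\le n$. This requires checking that every weight argument stays admissible in $[-B,B]$ and every shifted point stays in the rectangle where $G$ has bounded second derivatives. The shift by $\eta$ provides this, which is why the error term involves $\omega_B(\eta)$ rather than vanishing.
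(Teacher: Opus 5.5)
Your proposal is correct and follows the paper's proof essentially step for step. You truncate at a fixed $K$, bound the tail geometrically using $\widetilde w_\theta\le C_B$ and $G_u\ge p_\eta$, treat the head by second-order Taylor expansion and the fixed-$d$ weight limits, and compare $a_d(2u-t)$ with $a_d(2u-t+\eta)$ through the shifted HJ identity $e^{G_t}=A(2u-t+\eta,e^{-G_u})$ and the uniform modulus $\omega_B$, using $j\theta/\pi\ge\eta$ for the lower bound exactly as the paper does.
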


\begin{proof}
By Lemma~\ref{upper:lem:hj-candidate}, on the shifted rectangle
$G_u\geq p_\eta>0$ and $|G_t|\leq M_\eta$.
Integrating these derivatives along a horizontal and then a vertical
segment bounds the exponential in
\eqref{upper:eq:hj-test-ratio} by $e^{M_\eta-p_\eta d}$.
The exact normalized weights are uniformly bounded, so the terms
with $d>K$ have total at most
$C_Be^{M_\eta}\sum_{d>K}e^{-p_\eta d}$.
This bound holds even for exponents of order $\gamma^{-2}$.

Fix $K$ independently of $\gamma$. Bounded second derivatives give
\[
 \frac4{\gamma^2}
 [G(t-\theta/\pi+\eta,u-d\theta/\pi+\eta)-G(t+\eta,u+\eta)]
 =-G_t-dG_u+O_\eta(\gamma^2(1+K)^2)
\]
uniformly for $0\le d\le\min\{j,K\}$. No Taylor expansion is used for
$d>K$. Together with Lemma~\ref{upper:lem:weight-limits}, this gives
\[
 \widetilde w_\theta\left(
 2u-t+(1-2d)\frac{\theta}{\pi},d\right)
 e^{\frac4{\gamma^2}
 [G(t-\theta/\pi+\eta,u-d\theta/\pi+\eta)-G(t+\eta,u+\eta)]}
 =
 a_d(2u-t)e^{-G_t-dG_u}+o(1),
\]
where the derivatives are evaluated at $(t+\eta,u+\eta)$.
For a prescribed error, first choose $K$ to make the geometric tails
small, and then choose $\gamma$ small enough for the finitely many terms
$d\le K$. This yields an error tending uniformly to zero as
$\gamma\downarrow0$.
For every $j$, the truncated sum is therefore at most
\[
 e^{-G_t}A(2u-t,e^{-G_u})+o(1).
\]
For $j\theta/\pi\geq\eta$, all fixed $d\leq K$ are present for
small $\gamma$, and the same expression is also a lower approximation.

At the shifted point, \eqref{upper:eq:normalized-hj} gives
$e^{G_t}=A(2u-t+\eta,e^{-G_u})$.
The uniform continuity of $\log a_d$ implies
\[
 e^{-\omega_B(\eta)}
 \leq
 \frac{A(2u-t,e^{-G_u})}{A(2u-t+\eta,e^{-G_u})}
 \leq e^{\omega_B(\eta)}.
\]
All these ratios are bounded away from zero for fixed $\eta$.
The uniform additive errors can thus be absorbed into
$\varepsilon_{\gamma,\eta}$, proving the lemma.
\end{proof}

\subsection{Boundary estimates and convergence}

The zero initial row cannot be compared from below with a positive
exponential test array. We start at row $m=1$ instead. We also control
the strip $j\theta/\pi<\eta$, where truncation of the recurrence prevents
the two-sided one-step approximation.

On the first positive row the recurrence gives
\[
 c_B\leq\widetilde Q_{1,j}
       =\widetilde w_\theta(0,j)\leq C_B.
\]
The bounds follow from Lemma~\ref{upper:lem:uniform-local-weights}.
For the next lemma, retain $t=m\theta/\pi$, $u=j\theta/\pi$, and the
convention $0\log0=0$.

\begin{lemma}[Boundary estimates]\label{upper:lem:hj-boundary-estimates}
Uniformly for $1\le m\le N$ and $0\le j\le n$,
\begin{align}
 -C\left(u+\frac{\theta}{\pi}\right)
 &\leq
 \frac{\gamma^2}{4}\log\widetilde Q_{m,j}
 -\int_0^t\log a_0(-x)\,dx
 \notag\\
 &\leq C\left(u+\frac{\theta}{\pi}\right)
 +(t+u)\log(t+u)-t\log t-u\log u,
 \label{upper:eq:hj-strip-bound}
\end{align}
\end{lemma}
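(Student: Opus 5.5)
The plan is to expand the normalized recurrence \eqref{upper:eq:normalized-recursion} completely, starting from $\widetilde Q_{0,j}$. Iterating from $\widetilde Q_{0,0}=1$ writes $\widetilde Q_{m,j}$ as a sum over compositions $(d_1,\ldots,d_m)$ of $j$. Here $d_{m'}\ge0$ is the power assigned at step $m'$, and each composition contributes the product $\prod_{m'=1}^m\widetilde w_\theta(b_{m'},d_{m'})$. The argument at step $m'$ is $b_{m'}=(2j_{m'-1}-(m'-1))\theta/\pi$, where $j_{m'-1}=d_1+\cdots+d_{m'-1}$ is the moment order carried into that step. All terms are positive, and every $b_{m'}$ lies in $[-(m'-1)\theta/\pi,\,-(m'-1)\theta/\pi+2u]$. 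The argument therefore reduces to three estimates: (i) a sharp comparison of the $d=0$ weights with $a_0$; (ii) the crude bounds of Lemma~\ref{upper:lem:uniform-local-weights} for the steps with $d\ge1$; (iii) a combinatorial count of compositions. Both sides of \eqref{upper:eq:hj-strip-bound} should come out of this expansion.

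\emph{Step 1 ($d=0$ weights).} From $w_\theta(b,0)=H(b^2)$ and the definition of $\widetilde w_\theta$, I will show
\[
 \widetilde w_\theta(b,0)=H(b^2)\exp\Bigl\{-\tfrac{\pi}{\theta}\bigl[V(b-\theta/\pi)-V(b)\bigr]\Bigr\}.
\]
A second-order Taylor expansion of $V$ should give $\log\widetilde w_\theta(b,0)=\log a_0(b)+O_B(\theta)$ uniformly for $|b|\le B$, since $V'=g/2$ and $a_0(b)=(1-b^2)^{-1/2}H(b^2)$. The factor $\gamma^2/4=\theta/\pi$ multiplies a sum of at most $N$ such errors, so these errors contribute $O(\theta)$ in total. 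Next, $\log a_0$ is Lipschitz on $[-B,B]$ and $|b_{m'}+(m'-1)\theta/\pi|\le 2u$. Comparing the Riemann sum $\frac{\theta}{\pi}\sum_{m'}\log a_0(-(m'-1)\theta/\pi)$ with $\int_0^t\log a_0(-x)\,dx$ then costs only $O(u+\theta/\pi)$.

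\emph{Step 2 (lower bound).} Here I keep a single composition: $d_1=j$ and $d_{m'}=0$ for $m'\ge2$. The first factor is $\widetilde Q_{1,j}=\widetilde w_\theta(0,j)\ge c_B$, contributing $\frac{\theta}{\pi}\log c_B=O(\theta/\pi)$. The remaining factors are $d=0$ weights, and Step 1 applies to each. Together these give the left inequality with $C$ depending only on $B$.

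\emph{Step 3 (upper bound).} For a composition with $k$ nonzero parts, I bound each $d\ge1$ factor by $C_B\le C'_B\,a_0(b_{m'})$, using $a_0\ge1$. Each $d=0$ factor is handled by Step 1. Every composition is then bounded by $(C'_B)^k\exp\{\frac{4}{\gamma^2}[\int_0^t\log a_0(-x)\,dx+O(u+\theta/\pi)]\}$. The number of compositions with $k$ nonzero parts is $\binom mk\binom{j-1}{k-1}$. Summing over $k\le j$ bounds the total by $(C'_B)^j\binom{m+j}{j}$. Stirling, or the elementary bound $\binom{m+j}{j}\le (m+j)^{m+j}/(m^mj^j)$, gives
\[
 \tfrac{\theta}{\pi}\log\tbinom{m+j}{j}\le (t+u)\log(t+u)-t\log t-u\log u .
\]
The factor $(C'_B)^j$ contributes $u\log C'_B$. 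This yields the right inequality.

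The main obstacle I expect is the uniformity in Step 1. The $d=0$ comparison must hold with an $O(\theta)$ error, not merely $o(1)$, uniformly over all $N\sim\gamma^{-2}$ steps and all admissible $b$; only then does the accumulated error stay $O(\theta/\pi)$ and get absorbed into $C(u+\theta/\pi)$. The cases $j=0$ or small $m$ need separate attention, but they are covered by the convention $0\log0=0$.
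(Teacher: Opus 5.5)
Your proposal is correct and follows essentially the same route as the paper. Both arguments expand the normalized recurrence over compositions of $j$, compare the $d=0$ weights with $a_0$ up to $O(\theta)$ per step, bound the at most $j$ steps with $d\ge1$ crudely using the uniform weight bounds, keep a single composition for the lower bound, and count compositions with the binomial-theorem bound for the upper bound. Your count $\binom{m+j}{j}$ is slightly cruder than the exact number $\binom{m+j-1}{j}$ of compositions, which is harmless.
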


\begin{proof}
Expand the recurrence into integer choices
$d_1+\cdots+d_m=j$.
At block $k$, the state differs from $-(k-1)\theta/\pi$ by at most
$2u$. If $d_k=0$, the exact formula for $\widetilde w_\theta(b,0)$
and smoothness of $V$ give
\[
 \log\widetilde w_\theta(b,0)=\log a_0(b)+O(\theta/\pi)
\]
uniformly in $b$. Thus the logarithmic difference from
$a_0(-(k-1)\theta/\pi)$ is at most $C(u+\theta/\pi)$.
If $d_k>0$, that difference is uniformly bounded, and there are
at most $j$ such blocks.

It follows that the scaled logarithmic weight of every composition
differs from
$\frac{\theta}{\pi}\sum_{k=1}^m\log a_0(-(k-1)\theta/\pi)$
by at most $C(u+\theta/\pi)$.
The latter sum differs from its integral by $O(\theta/\pi)$.
Keeping one composition proves the lower bound.
For the upper bound, there are $\binom{m+j-1}{j}$ compositions, and
the binomial theorem gives
\[
 \frac{\theta}{\pi}\log\binom{m+j-1}{j}
 \leq (t+u)\log(t+u)-t\log t-u\log u.
\]
This proves \eqref{upper:eq:hj-strip-bound}.
\end{proof}

\begin{proposition}[Convergence by positive recursion]
\label{upper:prop:hj-convergence}
Along $N=2W/\gamma^2\in\mathbb N$ and $\gamma^2n\to q\in(0,2)$,
\[
 \frac{\gamma^2}{4}\log\widetilde Q_{N,n}
 \longrightarrow G(W/2,q/4),\qquad
 \frac{\gamma^2}{4}\log Q_{N,n}
 \longrightarrow F(W/2,q/4).
\]
\end{proposition}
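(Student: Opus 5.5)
Since $F-G=V(2u-t)$ and $\frac{\gamma^2}{4}\cdot\frac4{\gamma^2}[V((2n-N)\theta/\pi)-V(0)]\to V(q/2-W/2)$ (with $V(0)=0$), the second limit follows from the first. We therefore work with $\widetilde Q$ and prove upper and lower bounds by comparing it with the shifted test array
\[
\Phi^{\eta}_{m,j}=\exp\Bigl\{\tfrac4{\gamma^2}G\bigl(m\theta/\pi+\eta,\,j\theta/\pi+\eta\bigr)\Bigr\}.
\]
Since $G$ is continuous on compact rectangles (Lemma~\ref{upper:lem:hj-boundary-values}), first letting $\gamma\downarrow0$ and then $\eta\downarrow0$ converts bounds at shifted points into the claimed limit $G(W/2,q/4)$.

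\emph{Upper bound.} Use induction on $m\ge1$ to show
\[
\widetilde Q_{m,j}\le K_\eta\,e^{(m-1)(\omega_B(\eta)+\varepsilon_{\gamma,\eta})}\Phi^\eta_{m,j}.
\]
For the base row $m=1$, we have $\widetilde Q_{1,j}\le C_B$. Since $G\ge0$ near the axis (indeed $G_u>0$, and $G(\eta,\cdot)\ge G(\eta,0)$, which is close to $0$), we also have $\Phi^\eta_{1,j}\ge e^{-o(1)/\gamma^2}$. A little more care is needed here. The cleanest route is to use Lemma~\ref{upper:lem:hj-boundary-estimates} at $m=1$ together with $G(\eta+\theta/\pi,\eta+u)\ge G(\eta,0)+$(positive), absorbing the resulting $O(\eta\log\eta)$ discrepancy on the $\gamma^2/4$ log-scale into an error $\delta(\eta)\to0$.

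For the inductive step, apply the normalized recursion~\eqref{upper:eq:normalized-recursion} together with positivity of the weights. The inductive hypothesis turns the sum into $\Phi^\eta_{m,j}R^\eta_{m,j}$ times the constant, and Lemma~\ref{upper:lem:hj-consistency} bounds $R^\eta_{m,j}$ for all $j$. Iterating over $N$ steps contributes $\frac{\gamma^2}{4}N(\omega_B(\eta)+\varepsilon)=\frac W2(\omega_B(\eta)+\varepsilon)$ on the scaled log. Taking $\gamma\to0$ then gives
\[
\limsup\tfrac{\gamma^2}{4}\log\widetilde Q_{N,n}\le G(W/2+\eta,q/4+\eta)+\tfrac W2\omega_B(\eta)+\delta(\eta).
\]

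\emph{Lower bound.} The lower one-step bound only holds for $j\theta/\pi\ge\eta$. To handle this, compare $\widetilde Q$ with a modified test array. On the strip $u<2\eta$ (and on row $m=1$), replace $\Phi^\eta$ by the explicit lower boundary bound from Lemma~\ref{upper:lem:hj-boundary-estimates}, which gives
\[
\tfrac{\gamma^2}{4}\log\widetilde Q_{m,j}\ge\int_0^t\log a_0(-x)\,dx-C(u+\theta/\pi).
\]
This quantity is within $o_\eta(1)$ of $G(t+\eta,u+\eta)$ by the continuous extension $G(t,0)=\int_0^t\log a_0$ and uniform continuity of $G$.

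Concretely, set $\Psi_{m,j}=e^{-\kappa_\eta m\gamma^2/4-\rho(\eta)/\gamma^2}\Phi^\eta_{m,j}$ for suitable constants, and show $\widetilde Q_{m,j}\ge\Psi_{m,j}$ by induction. There are two cases.
\begin{itemize}
\item On the strip $u<\eta$ and on row $1$, the inequality comes from the boundary estimate and the choice of $\rho(\eta)$.
\item For $u\ge\eta$, use the recursion, positivity, the inductive hypothesis on the previous row (which may include strip points, already controlled), and the lower bound on $R^\eta_{m,j}$.
\end{itemize}
The result is $\liminf\ge G(W/2+\eta,q/4+\eta)-\frac W2\omega_B(\eta)-\rho(\eta)$, and then we let $\eta\to0$.

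\emph{Main obstacle.} The main obstacle is the boundary matching: choosing $\rho(\eta)\to0$ so that the test array lies below the true array on row $1$ and on the strip $j\theta/\pi<\eta$, and above it for the upper bound. This requires $|G(t+\eta,u+\eta)-\int_0^t\log a_0|$ to be small uniformly there. That smallness comes from the explicit entropy bounds in Lemma~\ref{upper:lem:hj-boundary-values}, which are of order $\eta\log(1/\eta)$. The upper bound on the strip needs the entropy term in~\eqref{upper:eq:hj-strip-bound}, which has the same order, so both directions close with $\rho(\eta)=O(\eta\log(1/\eta))$.
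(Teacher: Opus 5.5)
Your proposal is correct and follows essentially the same route as the paper. Both use the shifted test arrays $\exp\{\frac4{\gamma^2}G(t+\eta,u+\eta)\}$ with accumulated per-step corrections, bracket $\widetilde Q$ on row $m=1$ and on the strip $u<\eta$ via Lemma~\ref{upper:lem:hj-boundary-estimates} and the boundary values of $G$, induct on $m$ using only positivity and Lemma~\ref{upper:lem:hj-consistency}, and then send $\gamma\downarrow0$ followed by $\eta\downarrow0$.

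One slip to fix: the per-step factor in your lower array must be $e^{-m(\omega_B(\eta)+\varepsilon_{\gamma,\eta})}$, i.e.\ $e^{-\frac4{\gamma^2}t(\omega_B(\eta)+\varepsilon_{\gamma,\eta})}$ as in your upper bound, not $e^{-\kappa_\eta m\gamma^2/4}$, because the one-step lemma loses a fixed factor at every step. With that correction, your conclusion $-\frac W2\omega_B(\eta)-\rho(\eta)$ is exactly what the induction gives.
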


\begin{proof}
Fix $\eta>0$ as in the one-step lemma. Write
$t=m\theta/\pi$ and $u=j\theta/\pi$.
The boundary values of $G$ and
Lemma~\ref{upper:lem:hj-boundary-estimates} give numbers
$\beta_\eta>0$, tending to zero with $\eta$, such that
\[
 \left|\frac{\gamma^2}{4}\log\widetilde Q_{m,j}
       -G(t+\eta,u+\eta)\right|\leq\beta_\eta
\]
on row $m=1$ and throughout $u<\eta$, for all sufficiently small
$\gamma$.
On the first row the scaled logarithm is uniformly small and
$G(t+\eta,u+\eta)$ approaches its zero boundary value.
On the strip both quantities approach
$\int_0^t\log a_0(-x)\,dx$: the error in
\eqref{upper:eq:hj-strip-bound} tends uniformly to zero with $\eta$.

Define
\[
 U^\pm_{m,j}=
 \exp\left\{\frac4{\gamma^2}
 \left[G(t+\eta,u+\eta)\pm\beta_\eta
 \pm t\{\omega_B(\eta)+\varepsilon_{\gamma,\eta}\}\right]\right\}.
\]
These arrays bracket $\widetilde Q$ on the first row and on the strip.
A row step changes $t$ by $\gamma^2/4$, so
Lemma~\ref{upper:lem:hj-consistency} makes $U^+$ a supersolution
of \eqref{upper:eq:normalized-recursion} and $U^-$ a subsolution
for $u\geq\eta$.
Induction on $m$, using only positivity of the weights, gives
\[
 U^-_{m,j}\leq\widetilde Q_{m,j}\leq U^+_{m,j}.
\]
No PDE comparison theorem is being used.

At $(N,n)$, first let $\gamma\downarrow0$ with $\eta$ fixed.
The upper and lower limits differ from $G(W/2+\eta,q/4+\eta)$
by at most $\beta_\eta+(W/2)\omega_B(\eta)$.
Letting $\eta\downarrow0$ proves convergence of the normalized array.
Finally,
\[
 \frac{\gamma^2}{4}\log Q_{N,n}
 =
 \frac{\gamma^2}{4}\log\widetilde Q_{N,n}
 +V\left(\frac{(2n-N)\theta}{\pi}\right)-V(0),
\]
and $V(0)=0$, $F(t,u)=G(t,u)+V(2u-t)$.
This restores the exact normalization and proves the proposition.
\end{proof}

\subsection{The limiting moment transform}\label{sec:upper-results}\label{sec:analytic}

\begin{lemma}[Moment-transform limit]\label{upper:lem:transform}
For $0<q<2$, $\Lambda_{\gamma,W}(q)$ converges to
\begin{equation}\label{upper:eq:moment-limit}
 \Lambda_W(q)=q\log\frac{q}{4\pi}-q
 +4F(W/2,q/4)-4\int_0^{W/2}\log Z(t)\,\dd t.
\end{equation}
This limit extends continuously to $\Lambda_W(0)=0$.
\end{lemma}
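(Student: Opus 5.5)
The plan is to combine the exact moment formula \eqref{upper:eq:exact-moment} with Proposition~\ref{upper:prop:hj-convergence}. There are two wrinkles. First, $q/\gamma^2$ is generally not an integer. Second, the prefactors in \eqref{upper:eq:exact-moment} have to be evaluated asymptotically.

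\emph{Step 1: integer orders.} Take integers $n$ with $\gamma^2n\to q$ and evaluate
\[
\frac{\gamma^2}{4}\cdot 4\log\E[A_\gamma^n]
=\gamma^2\log n!+\gamma^2 n\log\frac{\theta^2}{\pi^2\sin\theta}
+4\cdot\frac{\gamma^2}{4}\log Q_{N,n}-\gamma^2\sum_{j=0}^{N-1}\log Z(j\theta/\pi).
\]
\begin{itemize}
\item By Stirling, $\gamma^2\log n!=\gamma^2 n\log n-\gamma^2 n+o(1)$.
\item Since $\theta^2/(\pi^2\sin\theta)\sim\theta/\pi^2=\gamma^2/(4\pi)$, the second term is $q\log(\gamma^2/(4\pi))+o(1)$. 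Combined with $\gamma^2n\log n=q\log(q/\gamma^2)+o(1)$, the first two terms together give $q\log(q/(4\pi))-q+o(1)$.
\item The last term is $4(\theta/\pi)\sum_j\log Z(j\theta/\pi)$, a Riemann sum of mesh $\gamma^2/4$ on $[0,W/2]$ with $W/2<1$. Since $\log Z$ is smooth there, it converges to $4\int_0^{W/2}\log Z$.
\item Proposition~\ref{upper:prop:hj-convergence} gives $4F(W/2,q/4)$ for the third term.
\end{itemize}
Together these yield $\gamma^2\log\E[A_\gamma^{n}]\to\Lambda_W(q)$.

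\emph{Step 2: non-integer exponents.} Set $n_-=\lfloor q/\gamma^2\rfloor$ and $n_+=n_-+1$. For $0<q<2$ and small $\gamma$, both satisfy $n_+<2/\gamma^2$, so their moments are finite. By Lyapunov (log-convexity of $p\mapsto\log\E[A^p]$), $\log\E[A^{q/\gamma^2}]$ lies between the linear interpolations of the values at nearby integers. Concretely, $\E[A^{q/\gamma^2}]\le\E[A^{n_-}]^{1-\lambda}\E[A^{n_+}]^{\lambda}$. For the lower bound, interpolate $n_-$ between $n_--1$ and $q/\gamma^2$ (using a Hölder inequality with exponent $n_-/(n_- -1)$-type), or equivalently express $n_-$ as a convex combination of $n_--1$ and $q/\gamma^2$. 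Each integer sequence used satisfies $\gamma^2 n_\pm\to q$, so Step~1 gives the same limit $\Lambda_W(q)$ and both bounds converge to it. The lower bound requires the integer point on the far side, so I would use $\E[A^{n_-}]\le\E[A^{n_--1}]^{\mu}\E[A^{q/\gamma^2}]^{1-\mu}$ with $1-\mu\to1$. Since the limits at all these integer sequences coincide, this pins down $\gamma^2\log\E[A^{q/\gamma^2}]$.

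\emph{Step 3: continuity at $0$.} This is the main obstacle, because Proposition~\ref{upper:prop:hj-convergence} is stated for $q>0$ and $F(W/2,u)$ is defined only for $u>0$. As $q\downarrow0$, the term $q\log(q/4\pi)-q$ tends to $0$. By Lemma~\ref{upper:lem:hj-boundary-values}, $G(W/2,q/4)\to\int_0^{W/2}\log a_0(-x)\,dx$, and $V(2u-t)\to V(-W/2)$. Hence
\[
4F(W/2,q/4)\to4\int_0^{W/2}\log a_0(-x)\,dx+4V(-W/2).
\]
It remains to check the identity
\[
\int_0^{W/2}\log a_0(-x)\,dx+V(-W/2)=\int_0^{W/2}\log Z(t)\,dt .
\]
Here $a_0(b)=(1-b^2)^{-1/2}H(b^2)=(1-b^2)^{-1/2}Z(b)$ and $Z$ is even, so $\int_0^{W/2}\log a_0(-x)\,dx=\int_0^{W/2}\log Z-\tfrac12\int_0^{W/2}\log(1-x^2)\,dx$. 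On the other hand, $V(-W/2)=\tfrac12\int_0^{-W/2}g$, and since $g$ is even this equals $-\tfrac12\int_0^{W/2}g=\tfrac12\int_0^{W/2}\log(1-x^2)\,dx$. The two $\log(1-x^2)$ terms cancel, so the identity holds and $\Lambda_W(0^+)=0$, as required.
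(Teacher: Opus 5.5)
Your proposal is correct and follows essentially the paper's proof: Stirling and a Riemann sum for the prefactors in \eqref{upper:eq:exact-moment}, Proposition~\ref{upper:prop:hj-convergence} for $Q_{N,n}$, interpolation between neighboring integer orders, and the boundary value $F(t,0)=\int_0^t\log Z(s)\dd s$ for continuity at the origin, which you verify explicitly via $a_0(-x)=(1-x^2)^{-1/2}Z(x)$ and the evenness of $g$. Two slips are harmless. In your lower bound $1-\mu=1/(1+\{q/\gamma^2\})$ only stays in $(1/2,1]$ rather than tending to $1$; boundedness away from zero is all you need, and the paper gets both bounds at once from $\|A_\gamma\|_{n_-}\le\|A_\gamma\|_{q/\gamma^2}\le\|A_\gamma\|_{n_+}$. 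In the Stirling step, keep $\gamma^2 n$ rather than $q$ in front of $\log(1/\gamma^2)$, so the divergent terms cancel exactly for an arbitrary sequence with $\gamma^2 n\to q$.
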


\begin{proof}
Stirling's formula and a Riemann sum give
\[
 \gamma^2\log\left\{n!
 \left(\frac{\theta^2}{\pi^2\sin\theta}\right)^n\right\}
 \longrightarrow q\log\frac{q}{4\pi}-q,\qquad
 \gamma^2\sum_{j=0}^{N-1}\log Z(j\theta/\pi)
 \longrightarrow4\int_0^{W/2}\log Z(t)\,dt.
\]
Combining these with \eqref{upper:eq:exact-moment} and
Proposition~\ref{upper:prop:hj-convergence} proves the displayed formula.
Initially this establishes the limit for integer orders with
$\gamma^2n\to q$. To obtain the real order $q/\gamma^2$, let
$n_-=\lfloor q/\gamma^2\rfloor$, $n_+=n_-+1$.
For small $\gamma$ both moments are finite and $n_->0$.
Monotonicity of $L^p$ norms gives
\[
 \frac{q/\gamma^2}{n_-}\gamma^2\log\E[A_\gamma^{n_-}]
 \leq\gamma^2\log\E[A_\gamma^{q/\gamma^2}]
 \leq\frac{q/\gamma^2}{n_+}\gamma^2\log\E[A_\gamma^{n_+}].
\]
Both outer limits equal \eqref{upper:eq:moment-limit}.

The boundary values of $G$ imply
$F(t,0)=\int_0^t\log Z(s)\,ds$ and continuity as $u\downarrow0$.
Together with $q\log q\to0$, this proves
$\Lambda_W(q)\to0$ as $q\downarrow0$. We set $\Lambda_W(0)=0$.

\end{proof}

\subsection{A scalar formula for the derivative}

Let $C=C(W/2,q/4)$ be the unique parameter in
Lemma~\ref{upper:lem:hj-candidate}. Its endpoint equation is
\[
 \frac{q-W}{2}
 =
 -\frac{\arccos(C\cos(\pi W/2))}{\pi}
 \frac{C\sin(\pi W/2)}{\sqrt{1-C^2\cos^2(\pi W/2)}}.
\]
Set
\[
 y=\frac1\pi\arccos(C\cos(\pi W/2)),\qquad
 z=\frac{y^2(1-C^2)}{1-C^2\cos^2(\pi W/2)}.
\]
\begin{lemma}[Moment slopes]\label{upper:lem:slopes}
The function $\Lambda_W$ is real analytic on $(0,2)$, and
\begin{equation}\label{upper:eq:lambda-derivative}
 \Lambda_W'(q)=\log\frac{q}{4\pi z}.
\end{equation}
Its derivative increases strictly from $\log a_*(W)$ to $+\infty$.
\end{lemma}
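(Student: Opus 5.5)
The plan is to read everything off the representation \eqref{upper:eq:moment-limit}, treating $W/2$ as fixed and $q/4$ as the second variable of $F$.

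\emph{Analyticity.} For $0<q<2$ the point $(W/2,q/4)$ lies in $(0,1)\times(0,1/2)$. By Lemma~\ref{upper:lem:hj-candidate}, $F$ is real analytic there. The terms $q\log(q/4\pi)-q$ and the constant $-4\int_0^{W/2}\log Z$ are analytic for $q>0$, so $\Lambda_W$ is real analytic on $(0,2)$.

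\emph{Derivative formula.} I will differentiate \eqref{upper:eq:moment-limit} in $q$. The chain rule gives
\[
 \Lambda_W'(q)=\log\frac{q}{4\pi}+4\cdot\tfrac14F_u(W/2,q/4).
\]
By \eqref{upper:eq:characteristic-derivatives}, $F_u(W/2,q/4)=-\log z_C(W/2)$ with $C=C(W/2,q/4)$. The displayed endpoint equation is just $u_C(W/2)=q/4$ written out, since $q=4u=2(t+b_C(t))$ at $t=W/2$. The displayed $z$ is $z_C(W/2)$. This yields \eqref{upper:eq:lambda-derivative}.

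\emph{Monotonicity.} I avoid computing $\Lambda_W''$ directly. By H\"older's inequality, $q\mapsto\gamma^2\log\E[A_\gamma^{q/\gamma^2}]$ is convex for every $\gamma$. Pointwise limits of convex functions are convex, so $\Lambda_W$ is convex on $(0,2)$ and $\Lambda_W'$ is nondecreasing. Since $\Lambda_W''$ is analytic and nonnegative, it is either identically zero or has only isolated zeros. In the latter case $\Lambda_W'$ is strictly increasing. It therefore suffices to show that the two endpoint limits of $\Lambda_W'$ differ. The limits below show this, and they also identify the range.

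\emph{Endpoint limits.} From the proof of Lemma~\ref{upper:lem:hj-candidate}, $C\mapsto u_C(W/2)$ is a decreasing bijection onto $(0,1/2)$. Hence $q\downarrow0$ corresponds to $C\uparrow1$, and $q\uparrow2$ corresponds to $C\downarrow-1$.

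Consider first $C\downarrow-1$. Then $y\to1-W/2$, which stays bounded away from zero, while $1-C^2\to0$ and the denominator stays positive. Hence $z\to0$ while $q\to2$, so $\Lambda_W'\to+\infty$.

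Consider next $C\uparrow1$, and put $\varepsilon=1-C$ and $t=W/2$. Both $q$ and $z$ vanish linearly in $\varepsilon$, so I expand each to first order.
\begin{itemize}
\item For $z$: we have $y\to t$ and $1-C^2\sim2\varepsilon$, so $z\sim2\varepsilon t^2/\sin^2(\pi t)$.
\item For $q$: we have $q=2(t+b_C(t))$ and $b_C(t)\to-t$. The formula for $\partial_Cb_C$ evaluated at $x=\cos\pi t$ gives
\[
 q\sim\frac{2\varepsilon\bigl(\pi t-\sin(\pi t)\cos(\pi t)\bigr)}{\pi\sin^2(\pi t)}.
\]
\end{itemize}
Taking the ratio,
\[
 \frac{q}{4\pi z}\to\frac{\pi t-\sin(\pi t)\cos(\pi t)}{4\pi^2t^2}=\frac{\pi W-\sin(\pi W)}{2\pi^2W^2}=a_*(W).
\]
Thus $\Lambda_W'\to\log a_*(W)$ as $q\downarrow0$.

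\emph{Main obstacle.} The delicate step is this $C\uparrow1$ expansion. One must check that $\partial_Cb_C$ extends continuously to $C=1$ with the stated value, which is true since $x=\cos(\pi W/2)\in(-1,1)$. One must also check that the next-order corrections in $y$ do not affect the leading terms; they do not, because $y-t=O(\varepsilon)$.

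With the limits $\log a_*(W)$ and $+\infty$ being distinct, $\Lambda_W'$ is not constant. Hence it is strictly increasing, and the two limits give exactly the range $(\log a_*(W),\infty)$.
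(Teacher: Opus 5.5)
Your proposal is correct and follows the paper's proof essentially step for step. You use the same ingredients: analyticity of the characteristic integral, $F_u=-\log z_C$ for the derivative formula, and first-order expansions in $1-C$, which you carry out explicitly and which agree with the paper's limits $q/(1-C)\to(\pi W-\sin(\pi W))/(\pi\sin^2(\pi W/2))$ and $z/(1-C)\to W^2/(2\sin^2(\pi W/2))$. Strict monotonicity likewise comes, as in the paper, from H\"older convexity combined with analyticity and the divergence of $\Lambda_W'$ at $q=2$.
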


\begin{proof}
Analyticity of the characteristic integral proves that
$\Lambda_W$ is real analytic for $0<q<2$.
Using $F_u=-\log z$ in \eqref{upper:eq:moment-limit} gives
\eqref{upper:eq:lambda-derivative}.
Together with $\Lambda_W(0)=0$, this determines the transform.

As $q\downarrow0$, $C\uparrow1$.
The endpoint derivative computed in the construction yields
\[
 \frac{q}{1-C}\longrightarrow
 \frac{\pi W-\sin(\pi W)}{\pi\sin^2(\pi W/2)},\qquad
 \frac{z}{1-C}\longrightarrow
 \frac{W^2}{2\sin^2(\pi W/2)}.
\]
Therefore
\[
 \Lambda_W'(0+)=\log a_*(W),\qquad
 a_*(W)=\frac{\pi W-\sin(\pi W)}{2\pi^2W^2}.
\]
As $q\uparrow2$, $C\downarrow-1$,
$y\to1-W/2>0$ and $z\to0$, so $\Lambda_W'(2-)=+\infty$.
Throughout, $\theta$ retains its meaning $\pi\gamma^2/4$.

H\"older's inequality gives convexity of the finite moment transforms,
and hence of their finite limit.
Thus $\Lambda_W'$ is nondecreasing. It is strictly increasing:
equal values at two different points would force it to be constant
on an interval, hence everywhere by analyticity, contradicting its
divergence at $q=2$.
Consequently, for every $a>a_*(W)$ there is a unique $q(a)\in(0,2)$
with $\Lambda_W'(q(a))=\log a$.

\end{proof}

\section{Convergence of the Laplace recursion}
\label{lower:sec:convergence}\label{part:lower}

Fix $s>0$; the case $s=0$ is immediate. We prove
$\gamma^2\log f_N(0)\to4\mathcal G(W/2,0)$.
The upper bound uses the one-step comparison on a bounded interval and
an estimate for paths leaving that interval. The lower bound uses a
change of measure: with probability tending to one under the new law,
the path follows the characteristic and its weighted likelihood has the
desired exponential rate.

\subsection{Exponential control of spatial exits}

Recall that
\[
    t_n=\frac{(n-1)\theta}{\pi}
       =\frac{(n-1)\gamma^2}{4}
\]
and that, when the \(n\)-th segment starts from \(x\),
\[
    X_{v,x}^{(n)}
    =
    x+\frac{\sqrt{2}\theta}{\pi}B_v
    -\frac{2t_n\theta}{\pi}v
    =
    x+\frac{\gamma^2}{2\sqrt{2}}B_v
    -\frac{\gamma^2t_n}{2}v.
\]
Choose \(p>0\) such that
\[
    \frac{N\theta}{\pi}+p<1.
\]
Concatenate the independent segments in the order $N,N-1,\ldots,1$,
starting at zero; $X$ denotes this continuous concatenated path.

\begin{lemma}[Spatial exit bound]\label{lower:lem:exit}
There is $C_p<\infty$, independent of $\gamma$, such that
\begin{equation}\label{lower:eq:exit}
 \mathbb P\left(
 \sup_{\text{concatenated process}}|X|>M
 \right)
 \leq
 \exp\left\{
 -\frac{4pM}{\gamma^2}+N\log C_p
 \right\}.
\end{equation}
\end{lemma}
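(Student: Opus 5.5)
We bound the two one-sided events $\{\sup X>M\}$ and $\{\inf X<-M\}$ separately, each by half of the right-hand side, via an exponential supermartingale built from the block structure. The natural device is the exponential of $\pm\lambda X$ with $\lambda=4p/\gamma^2$, since $e^{\lambda M}=e^{4pM/\gamma^2}$ is exactly the gain we need. Within the $n$-th segment, for fixed $v$, the process $e^{\lambda(X_v-x)}$ has, by the Gaussian computation already used in the formal derivation of the Hamilton--Jacobi equation,
\[
 \E\bigl[e^{\lambda(X^{(n)}_v-x)}\bigr]=\exp\Bigl\{\Bigl(\tfrac{\lambda^2\gamma^4}{16}-\tfrac{\lambda\gamma^2 t_n}{2}\Bigr)v\Bigr\}
 =e^{(p^2-2pt_n)v}\le e^{(p+t_n)^2v},
\]
and similarly for $e^{-\lambda(X_v-x)}$ with exponent $(p^2+2pt_n)v=((p+t_n)^2-t_n^2)v$.

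Define, along the concatenated path, the process that within segment $n$ equals $e^{\pm\lambda X_v-\kappa_n v}$, with $\kappa_n=(p+t_n)^2-t_n^2$, multiplied by the accumulated factors $\prod\E[e^{\kappa_{n'} V_{n'}}]^{-1}$ from completed segments. Here $\kappa_n\ge p^2\pm 2pt_n$ makes $e^{\pm\lambda X_v-\kappa_nv}$ a continuous supermartingale in $v$ given the clock. The clock $V_n$ is independent of $B$, so it can be treated as a random horizon. At each block boundary the exponential moment of the clock is compensated by dividing by
\[
 \E[e^{\kappa_nV_n}]=\frac{H((p+t_n)^2)}{H(t_n^2)}.
\]
This is finite precisely because $(p+t_n)^2\le(N\theta/\pi+p)^2<1$, which is where the choice of $p$ enters.

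A cleaner alternative avoids the clock tilt inside the martingale. Condition on all clocks, so each segment is Brownian with deterministic duration. Use the maximal inequality for the nonnegative martingale $e^{\pm\lambda X_v-(\text{quadratic term})v}$, bounding the drift term by $e^{(p^2+2pt_n)V_n}$ on each block. Doob's inequality then gives
\[
 \mathbb P\bigl(\sup X>M\mid V\bigr)\le e^{-\lambda M}\prod_n e^{c_nV_n},
\]
where $c_n=p^2+2pt_n\le (p+t_n)^2-t_n^2$ also handles the minus sign. Taking expectations yields
\[
 \mathbb P(\sup X>M)\le e^{-\lambda M}\prod_n \frac{H((p+t_n)^2)}{H(t_n^2)}\le e^{-4pM/\gamma^2}\,\Bigl(\sup_{0\le t\le W/2}\frac{H((p+t)^2)}{H(t^2)}\Bigr)^N.
\]

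Set $C_p=2\sup_{t\le W/2}H((p+t)^2)/H(t^2)$, which is finite and independent of $\gamma$ because $t_n\le W/2<N\theta/\pi+p<1$. The factor $2$ (with $2\le 2^N$) absorbs the union of the two signs.

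The main obstacle is making the maximal inequality legitimate across the concatenation with random, unbounded block durations. The clean route is to condition on all clocks first, so that the concatenated path is a Brownian motion with piecewise-constant drift on a deterministic time interval, and then integrate out the independent clocks. Independence of the clocks from $B$ in each block, together with their product-form mgf \eqref{lower:eq:clockmgf}, is exactly what makes this step work.
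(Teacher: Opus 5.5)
Your argument is correct, but it is organized differently from the paper's.

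The paper never treats the concatenated path as one process. It bounds $\sup|X|$ by the sum of the per-segment oscillations $\sup_{v\le V_n}|X^{(n)}_v-X^{(n)}_0|$. Using the reflection principle conditionally on $V_n$, together with the clock transform, it proves the one-block estimate $\E\exp\{\frac{4p}{\gamma^2}\sup_{v\le V_n}|X^{(n)}_v-X^{(n)}_0|\}\le 4H((t_n+p)^2)/H(t_n^2)$. It then applies the exponential Markov inequality to this sum of independent oscillations.

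You instead condition on all clocks and view the concatenated path as a Brownian motion with piecewise-constant drift on a deterministic interval. You dominate $e^{\pm\lambda X_\tau}$ by the exponential martingale $e^{\pm\sqrt2\,p\widetilde B_\tau-p^2\tau}$ times $e^{\sum_n c_nV_n}$. Ville's (Doob's) maximal inequality with this clock-dependent threshold then gives the conditional bound, and you integrate out the independent clocks.

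The two routes use the same inputs: the exponent $\lambda=4p/\gamma^2$, the per-block cost $e^{(p^2+2pt_n)V_n}$, and the shifted clock transform $H((p+t_n)^2)/H(t_n^2)$, which is finite because $t_n+p<1$.

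What each route buys:
\begin{itemize}
\item Yours avoids the triangle inequality across segments and gives a slightly better constant: an overall factor $2$ instead of $4^N$. The price is the conditioning step and a continuous-time maximal inequality across block boundaries.
\item The paper's needs only a one-block bound plus independence, and that one-block exponential moment is a reusable estimate.
\item At the large-deviation scale the difference in constants is immaterial, since $N\gamma^2=2W$ is bounded and $M$ is sent to infinity afterwards.
\end{itemize}

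Your first sketch, the supermartingale with factors $\E[e^{\kappa_nV_n}]^{-1}$ accumulated at block boundaries, is not needed. As written it does not give the bound by itself. The compensating factor $e^{-\sum_n\kappa_nV_n}$ is random and can be arbitrarily small, so $\{\sup X>M\}$ does not become the event that this process exceeds a deterministic level. The conditional version you then adopt is exactly the fix.
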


\begin{proof}
Conditionally on \(V_n\), the reflection principle and the elementary
bound for the maximum of a Brownian motion give
\begin{align*}
 &\mathbb E\left[
 \left.
 \exp\left\{
 \frac{4p}{\gamma^2}
 \sup_{0\leq v\leq V_n}
 \left|X_{v,x}^{(n)}-x\right|
 \right\}
 \right|V_n
 \right] \\
 &\qquad\leq
 4\exp\left\{(p^2+2pt_n)V_n\right\}.
\end{align*}
Using
\[
    \mathbb E[e^{\lambda V_n}]
    =
    \frac{H(t_n^2+\lambda)}{H(t_n^2)},
\]
we obtain
\begin{equation}\label{lower:eq:oneblockexitmoment}
 \mathbb E\exp\left\{
 \frac{4p}{\gamma^2}
 \sup_{0\leq v\leq V_n}
 \left|X_{v,x}^{(n)}-x\right|
 \right\}
 \leq
 4\frac{H((t_n+p)^2)}{H(t_n^2)}.
\end{equation}
The right-hand side is bounded uniformly in \(n\leq N\), since
\(t_n+p<1\).  Thus there is a finite constant \(C_p\), independent of
\(n\) and \(\gamma\), such that the right-hand side of
\eqref{lower:eq:oneblockexitmoment} is at most \(C_p\).

The actual concatenated process traverses the segments in the order
\(N,N-1,\ldots,1\).  Its distance from the initial point is bounded by
the sum of the oscillations of the individual segments.  Since the
segments are independent, exponential Markov's inequality gives
\eqref{lower:eq:exit}.
Since \(\theta=\pi\gamma^2/4\), this implies
\begin{equation}\label{lower:eq:exitrate}
 \limsup_{\gamma\to0}
 \gamma^2\log
 \mathbb P\left(
 \sup_{\text{concatenated process}}|X|>M
 \right)
 \leq
 -4pM+\frac{4N\theta}{\pi}\log C_p.
\end{equation}
\end{proof}

The area factor is bounded by one, so the same estimate controls the
contribution of the exit event to the negative Laplace transform.

\subsection{Compact-domain upper comparison}

Fix \(M<\infty\) and let \(D=(-M,M)\).  When the \(n\)-th segment starts
from \(x\in D\), write
\[
 X_{v,x}^{(n)}
 =
 x+\frac{\sqrt{2}\theta}{\pi}B_v
 -\frac{2t_n\theta}{\pi}v
 =
 x+\frac{\gamma^2}{2\sqrt{2}}B_v
 -\frac{\gamma^2t_n}{2}v,
 \qquad
 t_n=\frac{(n-1)\theta}{\pi}.
\]
Let
\[
 \tau_D^{(n,x)}
 =
 \inf\{v\geq0:X_{v,x}^{(n)}\notin D\}.
\]
The killed version of the operator \(\mathcal K_n\) from Section~\ref{lower:sec:clock} is
\begin{align}
 (\mathcal K_n^D\varphi)(x)
 &=
 \mathbb E\left[
 \exp\left\{
 -\frac{s\gamma^2}{16\sin\theta}
 \int_0^{V_n}e^{2X_{v,x}^{(n)}}\dd v
 \right\}
 \varphi(X_{V_n,x}^{(n)});
 V_n<\tau_D^{(n,x)}
 \right].
 \label{lower:eq:killedoperator}
\end{align}
Thus \(\mathcal K_n^D\) is exactly \(\mathcal K_n\), with the additional
condition that the \(n\)-th segment remain in \(D\).

For a deterministic clock value \(r\), define
\begin{align}
 (T_{n,\gamma}^D(r)\varphi)(x)
 &=
 \mathbb E\left[
 \exp\left\{
 -\frac{s\gamma^2}{16\sin\theta}
 \int_0^r e^{2X_{v,x}^{(n)}}\dd v
 \right\}
 \varphi(X_{r,x}^{(n)});
 r<\tau_D^{(n,x)}
 \right].
 \label{lower:eq:killedsemigroup}
\end{align}
Since \(V_n\) is independent of the Brownian motion,
\[
 \mathcal K_n^D\varphi
 =
 \mathbb E\bigl[T_{n,\gamma}^D(V_n)\varphi\bigr].
\]
Both \(T_{n,\gamma}^D(r)\) and \(\mathcal K_n^D\) preserve pointwise
inequalities. Define operator
\[
 \mathcal A_{n,\gamma}
 =
 \frac{\gamma^4}{16}\partial_x^2
 -\frac{\gamma^2t_n}{2}\partial_x
 -\frac{s\gamma^2}{16\sin\theta}e^{2x}.
\]

By Feynman-Kac, $u(r,x) = (T_{n,\gamma}^D(r)\varphi)(x)$ solves PDE
\[\begin{cases}
    \partial_r u(r,x) = \mathcal A_{n,\gamma} u(r,x),\qquad r > 0, x\in D\\
    u(r,x) = 0, r > 0,\qquad x\in \partial D\\
    u(0,x) = \varphi(x),\qquad x\in D
\end{cases}
\]

For \(0\leq t\leq N\theta/\pi\), set
\[
 g_{\gamma,t}(x)
 =
 \exp\left\{\frac{4}{\gamma^2}\mathcal G(t,x)\right\}.
\]

\begin{lemma}[Uniform one-step supersolution estimate]
\label{lower:lem:consistency}
For every fixed \(M<\infty\), there exists
\(\eta_\gamma(M)\to0\) such that, for every \(1\leq n\leq N\) and every
\(x\in D\),
\begin{equation}\label{lower:eq:onestep}
 \mathcal K_n^Dg_{\gamma,t_n}(x)
 \leq
 e^{\eta_\gamma(M)}g_{\gamma,t_{n+1}}(x).
\end{equation}
\end{lemma}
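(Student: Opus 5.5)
The plan is a local Feynman--Kac supersolution argument around each starting point, combined with the clock moment-generating formula \eqref{lower:eq:clockmgf} and a separate treatment of large clocks and large local excursions. Fix $n$ and $x\in D$, and write $t=t_n$ and $p=\mathcal G_x(t,x)$. By Lemma~\ref{lower:lem:hj-candidate}, on $[0,W/2]\times[-M,M]$ the derivatives $\mathcal G_x,\mathcal G_{xx},\mathcal G_t,\mathcal G_{tt}$ are bounded. The argument $v(t,y)=(\mathcal G_x(t,y)-t)^2-\frac{s}{4\pi}e^{2y}$ is at most $1-\delta_M$ there.

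\emph{Generator computation.} For $g=g_{\gamma,t}$ a direct computation gives
\[
 \frac{\mathcal A_{n,\gamma}g}{g}(y)
 =\mathcal G_x(t,y)^2-2t\,\mathcal G_x(t,y)
 -\frac{s\gamma^2}{16\sin\theta}e^{2y}
 +\frac{\gamma^2}{4}\mathcal G_{xx}(t,y)
 =v(t,y)-t^2+O_M(\gamma^2),
\]
uniformly on $D$, since $s\gamma^2/(16\sin\theta)=s/(4\pi)+O(\gamma^4)$. Choose a small $\delta>0$ and put
\[
 \kappa_x=\sup_{|y-x|\le\delta,\ y\in D}v(t,y)-t^2+c_M\gamma^2 .
\]
By uniform continuity, $\kappa_x\le v(t,x)-t^2+\omega_M(\delta)+c_M\gamma^2$, and $t^2+\kappa_x\le1-\delta_M/2$ for small $\delta,\gamma$. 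Let $\sigma$ be the exit time from $(x-\delta,x+\delta)\cap D$. By It\^o's formula, $e^{-\kappa_x r}\,e^{-\frac{s\gamma^2}{16\sin\theta}\int_0^re^{2X_v}\dd v}\,g(X_r)$ is a supermartingale up to $\sigma$. Optional stopping at $r\wedge\sigma$, with a fixed deterministic $r$, then bounds the contribution of $\{r<\sigma\}$ to $T^D_{n,\gamma}(r)g(x)$ by $e^{\kappa_x r}g(x)$. Averaging over $V_n$ with \eqref{lower:eq:clockmgf} gives the bound $g(x)H(t^2+\kappa_x)/H(t^2)$. By the HJ equation \eqref{lower:eq:comparisonpde} and a Taylor expansion in $t$ with bounded $\mathcal G_{tt}$, this is at most $e^{\omega'(\delta)+O(\gamma^2)}g_{\gamma,t_{n+1}}(x)$. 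Here $\omega'$ comes from the local Lipschitz bound of $\log H$ on $(-\infty,1-\delta_M/2]$.

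\emph{Local exits.} It remains to control the event $\{\sigma\le V_n<\tau_D\}$, on which the path leaves the $\delta$-ball but stays in $D$. On this event $g(X_{V_n})/g(x)\le\exp\{\frac4{\gamma^2}L_M\,\mathrm{osc}\}$, where $L_M=\sup|\mathcal G_x|$ and $\mathrm{osc}=\sup_{v\le V_n}|X_v-x|\ge\delta$. For this piece I will use a crude exponential estimate in the spirit of Lemma~\ref{lower:lem:exit}. Split further on $\{V_n\le R\}$ and $\{V_n>R\}$. On $\{V_n\le R\}$, Gaussian tails of the Brownian oscillation at scale $\gamma^2$ give probability $\exp\{-c\delta^2/(\gamma^4R)\}$. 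This beats $e^{4L_M(2M)/\gamma^2}$ as $\gamma\to0$, so this part is $o(1)\cdot g_{\gamma,t_{n+1}}(x)$. The positive lower bound of $g_{\gamma,t_{n+1}}(x)e^{-\frac4{\gamma^2}\mathcal G(t,x)}$ is needed here; it holds since $\mathcal G_t$ is bounded.

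\emph{Main obstacle: large clocks.} The hardest step is the tail $\{V_n>R\}$, because a crude bound $e^{8ML_M/\gamma^2}$ need not be beaten by the clock tail alone. The clock tail only gives $P(V_n>R)\approx e^{-(1-t^2)R}$, and this is not at scale $\gamma^{-2}$. The first thing I will try is to avoid the crude bound by extending the supermartingale argument to all of $D$ with a \emph{space-dependent} rate. Use the same It\^o computation with $\kappa$ replaced by $\bar\kappa=\sup_{y\in D}v(t,y)-t^2+c_M\gamma^2$, which still satisfies $t^2+\bar\kappa\le1-\delta_M/2$. This gives the global bound
\[
 E\bigl[\text{integrand};\,V_n<\tau_D,\,V_n>R\bigr]
 \le g(x)\,E\bigl[e^{\bar\kappa V_n};V_n>R\bigr].
\]
The right-hand side is $g(x)\cdot o_R(1)$ by the strict margin $\delta_M/2$. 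Choosing $R$ large and then $\gamma$ small, all three contributions sum to at most $e^{\omega'(\delta)+o(1)}g_{\gamma,t_{n+1}}(x)$. Letting $\delta=\delta_\gamma\downarrow0$ slowly yields $\eta_\gamma(M)\to0$. Uniformity in $n$ and $x$ follows because every constant above depends only on $M$, $W$ and $s$.
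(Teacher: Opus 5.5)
Your proposal is correct, but it handles the bounded-clock regime by a different route from the paper.

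\textbf{Large clocks.} Here the two arguments coincide. The paper's parabolic maximum principle with constant rate $1-t_n^2-\delta_M/2$ is exactly your global supermartingale with rate $\bar\kappa$. Both then absorb the tail through $e^{-\delta_M R/4}H(1-\delta_M/4)$.

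\textbf{Bounded clocks, paper's route.} For $V_n\le K$ the paper performs a Girsanov ground-state transform with density built from $\partial_x\log g_{\gamma,t_n}$. This gives the exact identity $T^D_{n,\gamma}(r)g/g=\widehat{\mathbb E}_x[\exp\{\int_0^r(\mathcal A_{n,\gamma}g/g)(Y_v)\,\dd v\};r<\tau_D]$. Under the new law the drift is $O(\gamma^2)$, so for bounded $r$ the path stays within $O(\gamma^2)$ of $x$. The potential can then be frozen at $x$ using Gaussian exponential moments of the Brownian maximum. This costs Novikov's condition and a Lipschitz extension of the drift off $D$, but it introduces no auxiliary spatial scale.

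\textbf{Bounded clocks, your route.} You localize the supermartingale to a $\delta$-neighbourhood of $x$ with rate $\kappa_x$. This produces the main term $H(t^2+\kappa_x)/H(t^2)$ for all clock values at once. You pay for leaving the neighbourhood with the crude factor $e^{8ML_M/\gamma^2}$. That factor is beaten by the Brownian tail $e^{-c\delta^2/(\gamma^4R)}$, because spatial fluctuations live at scale $\gamma^2$ while the test function varies only at exponential scale $\gamma^{-2}$.

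\textbf{Comparison.} Your argument is more elementary: it uses only It\^o's formula, optional stopping and the reflection principle. It needs the extra parameters $\delta,R$ and the modulus $\omega'(\delta)$, which the paper avoids.

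\textbf{A simplification.} You do not need $\delta=\delta_\gamma$. Fix $\delta$ and $R$ and take $\limsup_{\gamma\downarrow0}$ of $\sup_{n,x}\log[\mathcal K^D_ng_{\gamma,t_n}(x)/g_{\gamma,t_{n+1}}(x)]$. Your bounds give at most $\log(e^{\omega'(\delta)}+Ce^{-\delta_M R/4})$. Sending $R\to\infty$ and then $\delta\downarrow0$ yields the lemma.
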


\begin{proof}

We separate large clock values from bounded clock values.
A differential inequality controls the large-clock contribution.
For bounded clocks, an explicit change of measure controls the spatial
variation. Averaging and using the HJ equation gives the one-step bound.

First, we will construct a supersolution for PDE 
\[\begin{cases}
    \partial_r u(r,x) = \mathcal A_{n,\gamma} u(r,x),\qquad r > 0, x\in D\\
    u(r,x) = 0, \qquad r > 0, x\in \partial D\\
    u(0,x) = g_{\gamma,t_n}(x) ,\qquad x\in D
\end{cases}.
\]

Here, $u = T^D_{n,\gamma}(r)g_{\gamma, t_n}(x)$. Direct differentiation gives
\begin{align}
 \frac{\mathcal A_{n,\gamma}g_{\gamma,t_n}(x)}
      {g_{\gamma,t_n}(x)}
 &=
 (\mathcal G_x(t_n,x)-t_n)^2
 -\frac{s}{4\pi}e^{2x}
 -t_n^2+r_\gamma(t_n,x),
 \label{lower:eq:generatorconsistency}
\end{align}
where
\[
 r_\gamma(t,x)
 =
 \frac{\gamma^2}{4}\mathcal G_{xx}(t,x)
 -
 \left(
 \frac{s\gamma^2}{16\sin\theta}
 -\frac{s}{4\pi}
 \right)e^{2x}.
\]
Since \(\theta=\pi\gamma^2/4\), smoothness of \(\mathcal G\) gives
\[
 \sup_{\substack{0\leq t\leq N\theta/\pi\\x\in D}}
 |r_\gamma(t,x)|
 =O_M(\gamma^2).
\]

By the strict-domain condition and compactness, there exists
\(\delta_M>0\) such that
\[
 \sup_{\substack{0\leq t\leq N\theta/\pi\\x\in D}}
 \left\{
 (\mathcal G_x(t,x)-t)^2
 -\frac{s}{4\pi}e^{2x}
 \right\}
 \leq1-\delta_M.
\]
Consequently, for all sufficiently small \(\gamma\),
\[
 \mathcal A_{n,\gamma}g_{\gamma,t_n}(x)
 \leq
 \left(1-t_n^2-\frac{\delta_M}{2}\right)
 g_{\gamma,t_n}(x),
 \qquad x\in D.
\]
Set
\[
 v(r,x)
 =
 \exp\left\{
 \left(1-t_n^2-\frac{\delta_M}{2}\right)r
 \right\}g_{\gamma,t_n}(x).
\]
Then $v(r,x)$ is a supersolution, because
\begin{align*}
 \partial_rv-\mathcal A_{n,\gamma}v
 &=
 e^{(1-t_n^2-\delta_M/2)r}
 \left[
 \left(1-t_n^2-\frac{\delta_M}{2}\right)
 g_{\gamma,t_n}
 -
 \mathcal A_{n,\gamma}g_{\gamma,t_n}
 \right]
 \geq0.
\end{align*}
Moreover, \(u(0,\cdot)=v(0,\cdot)\) in \(D\), while
\(u=0\leq v\) on \(\partial D\).  The parabolic maximum principle
therefore gives
\begin{equation}\label{lower:eq:semigrouptailbound}
 T_{n,\gamma}^D(r)g_{\gamma,t_n}(x)
 \leq
 \exp\left\{
 \left(1-t_n^2-\frac{\delta_M}{2}\right)r
 \right\}g_{\gamma,t_n}(x).
\end{equation}
The above computation automatically yields a bound for $\mathcal K^D_n g_{\gamma, t_n}/g_{\gamma, t_{n+1}}$ when $V_n$ is large: 
\begin{align}
 &\frac{1}{g_{\gamma,t_n}(x)}
 \mathbb E\left[
 T_{n,\gamma}^D(V_n)g_{\gamma,t_n}(x);
 V_n>K
 \right] \notag\\
 &\qquad\leq
 e^{-\delta_MK/4}
 \mathbb E\left[
 e^{(1-t_n^2-\delta_M/4)V_n}
 \right] \notag\\
 &\qquad=
 e^{-\delta_MK/4}
 \frac{H(1-\delta_M/4)}{H(t_n^2)}
 \leq
 C_Me^{-\delta_MK/4}.
 \label{lower:eq:clocktailcomparison}
\end{align}

We next consider deterministic clock values \(0\leq r\leq K\).
Under the original probability measure, the process in the \(n\)-th
block satisfies
\[
 \dd X_v
 =
 \frac{\gamma^2}{2\sqrt{2}}\dd B_v
 -
 \frac{\gamma^2t_n}{2}\dd v,
 \qquad X_0=x.
\]
For \(v\leq\tau_D^{(n,x)}\), define
\begin{align}
 Z_v
 &=
 \exp\left\{
 \int_0^v
 \frac{\gamma^2}{2\sqrt{2}}\,
 \partial_x\log g_{\gamma,t_n}(X_a)\,\dd B_a
 \right. \notag\\
 &\hspace{27mm}\left.
 -
 \frac{\gamma^4}{16}
 \int_0^v
 \bigl(\partial_x\log g_{\gamma,t_n}(X_a)\bigr)^2\dd a
 \right\}.
 \label{lower:eq:girsanovdensity}
\end{align}
On the compact interval \(D\), the derivative
\(\partial_x\log g_{\gamma,t_n}\) is bounded.  Thus the stopped
stochastic exponential
\(Z_{v\wedge\tau_D^{(n,x)}}\) satisfies Novikov's condition and is a
martingale.

We define \(\widehat{\mathbb P}_x\) by
\[
 \left.
 \frac{\dd\widehat{\mathbb P}_x}{\dd\mathbb P_x}
 \right|_{\mathcal F_{r\wedge\tau_D^{(n,x)}}}
 =
 Z_{r\wedge\tau_D^{(n,x)}},
\]
and write \(\widehat{\mathbb E}_x\) for expectation under
\(\widehat{\mathbb P}_x\).

We now derive the change-of-measure identity.  It\^o's formula applied
to \(\log g_{\gamma,t_n}(X_v)\) gives
\begin{align}
 \dd\log g_{\gamma,t_n}(X_v)
 &=
 \frac{\gamma^2}{2\sqrt{2}}\,
 \partial_x\log g_{\gamma,t_n}(X_v)\,\dd B_v
 \notag\\
 &\quad+
 \left[
 -\frac{\gamma^2t_n}{2}
 \partial_x\log g_{\gamma,t_n}(X_v)
 +
 \frac{\gamma^4}{16}
 \partial_{xx}\log g_{\gamma,t_n}(X_v)
 \right]\dd v.
 \label{lower:eq:itologg}
\end{align}
On the other hand, the definition of \(Z_v\) gives
\begin{align}
 \dd\log Z_v
 &=
 \frac{\gamma^2}{2\sqrt{2}}\,
 \partial_x\log g_{\gamma,t_n}(X_v)\,\dd B_v
 \notag\\
 &\quad-
 \frac{\gamma^4}{16}
 \bigl(\partial_x\log g_{\gamma,t_n}(X_v)\bigr)^2\dd v.
 \label{lower:eq:itodensity}
\end{align}
Subtracting \eqref{lower:eq:itodensity} from
\eqref{lower:eq:itologg}, we obtain
\begin{align}
 \dd\log g_{\gamma,t_n}(X_v)-\dd\log Z_v
 &=
 \left[
 -\frac{\gamma^2t_n}{2}
 \partial_x\log g_{\gamma,t_n}(X_v)
 \right. \notag\\
 &\qquad\left.
 +
 \frac{\gamma^4}{16}
 \partial_{xx}\log g_{\gamma,t_n}(X_v)
 +
 \frac{\gamma^4}{16}
 \bigl(\partial_x\log g_{\gamma,t_n}(X_v)\bigr)^2
 \right]\dd v.
 \label{lower:eq:subtractito}
\end{align}
For every positive smooth function \(g\),
\[
 \frac{\partial_{xx}g}{g}
 =
 \partial_{xx}\log g
 +
 \bigl(\partial_x\log g\bigr)^2.
\]
Therefore the right-hand side of
\eqref{lower:eq:subtractito}, after subtracting the Feynman--Kac
potential, is precisely
\[
 \frac{\mathcal A_{n,\gamma}g_{\gamma,t_n}(X_v)}
      {g_{\gamma,t_n}(X_v)}\dd v,
\]
because
\begin{align*}
 \frac{\mathcal A_{n,\gamma}g_{\gamma,t_n}}
      {g_{\gamma,t_n}}
 &=
 -\frac{\gamma^2t_n}{2}
 \partial_x\log g_{\gamma,t_n}
 +
 \frac{\gamma^4}{16}
 \left[
 \partial_{xx}\log g_{\gamma,t_n}
 +
 \bigl(\partial_x\log g_{\gamma,t_n}\bigr)^2
 \right] \\
 &\qquad
 -
 \frac{s\gamma^2}{16\sin\theta}e^{2x}.
\end{align*}
Integrating up to time \(r\), on the event
\(\{r<\tau_D^{(n,x)}\}\), gives
\begin{align}
 &\exp\left\{
 -\frac{s\gamma^2}{16\sin\theta}
 \int_0^r e^{2X_v}\dd v
 \right\}
 \frac{g_{\gamma,t_n}(X_r)}
      {g_{\gamma,t_n}(x)}
 \notag\\
 &\qquad=
 Z_r
 \exp\left\{
 \int_0^r
 \frac{\mathcal A_{n,\gamma}g_{\gamma,t_n}(X_v)}
      {g_{\gamma,t_n}(X_v)}
 \dd v
 \right\}.
 \label{lower:eq:pathwisegroundstate}
\end{align}
Taking expectation under \(\mathbb P_x\) and using
\(\dd\widehat{\mathbb P}_x=Z_r\,\dd\mathbb P_x\) on the event
\(\{r<\tau_D^{(n,x)}\}\), we conclude that
\begin{align}
 \frac{T_{n,\gamma}^D(r)g_{\gamma,t_n}(x)}
      {g_{\gamma,t_n}(x)}
 &=
 \widehat{\mathbb E}_x\left[
 \exp\left\{
 \int_0^r
 \frac{\mathcal A_{n,\gamma}g_{\gamma,t_n}(Y_v)}
      {g_{\gamma,t_n}(Y_v)}
 \dd v
 \right\};
 r<\tau_D^{(n,x)}
 \right].
 \label{lower:eq:groundstate}
\end{align}

For completeness, we also identify the process under the new measure.
Girsanov's theorem shows that
\[
 \widehat B_v
 =
 B_v-
 \int_0^v
 \frac{\gamma^2}{2\sqrt{2}}\,
 \partial_x\log g_{\gamma,t_n}(X_a)\,\dd a
\]
is a Brownian motion under \(\widehat{\mathbb P}_x\), up to the exit
time.  Hence
\begin{align*}
 \dd X_v
 &=
 \frac{\gamma^2}{2\sqrt{2}}\dd\widehat B_v
 +
 \left[
 -\frac{\gamma^2t_n}{2}
 +
 \frac{\gamma^4}{8}
 \partial_x\log g_{\gamma,t_n}(X_v)
 \right]\dd v.
\end{align*}
Since
\[
 \partial_x\log g_{\gamma,t_n}(x)
 =
 \frac{4}{\gamma^2}\mathcal G_x(t_n,x),
\]
the transformed coordinate process, denoted by \(Y\), satisfies
\begin{equation}\label{lower:eq:transformedprocess}
 \dd Y_v
 =
 \frac{\gamma^2}{2\sqrt{2}}\dd\widehat B_v
 +
 \frac{\gamma^2}{2}
 \bigl(\mathcal G_x(t_n,Y_v)-t_n\bigr)\dd v,
 \qquad Y_0=x,
\end{equation}
up to its first exit from \(D\).
Extend the drift in \eqref{lower:eq:transformedprocess} and the
function appearing in \eqref{lower:eq:generatorconsistency} from \(D\)
to bounded Lipschitz functions on the real line, with bounds uniform in
\(n\).  The extended process agrees with \(Y\) before its first exit
from \(D\).  Since the integrand in
\eqref{lower:eq:groundstate} is positive after exponentiation, dropping
the exit indicator gives an upper bound.

For \(0\leq r\leq K\), the extended process satisfies
\[
 \sup_{0\leq v\leq r}|Y_v-x|
 \leq
 C_M\gamma^2K
 +
 \frac{\gamma^2}{2\sqrt{2}}
 \sup_{0\leq v\leq K}|\widehat B_v|.
\]
The Lipschitz bounds and the Gaussian exponential moments of the
Brownian maximum therefore imply, uniformly in \(n\), \(x\in D\), and
\(0\leq r\leq K\),
\begin{align}
 \frac{T_{n,\gamma}^D(r)g_{\gamma,t_n}(x)}
      {g_{\gamma,t_n}(x)}
 &\leq
 \exp\left\{
 r\left[
 (\mathcal G_x(t_n,x)-t_n)^2
 -\frac{s}{4\pi}e^{2x}
 -t_n^2
 \right]
 +\varepsilon_{\gamma,M,K}
 \right\},
 \label{lower:eq:boundedclockfixed}
\end{align}
where \(\varepsilon_{\gamma,M,K}\to0\) as \(\gamma\to0\), for fixed
\(M\) and \(K\).

Averaging over \(V_n\leq K\) and using
\[
 \mathbb E[e^{\lambda V_n}]
 =
 \frac{H(t_n^2+\lambda)}{H(t_n^2)}
\]
gives
\begin{align}
 &\frac{1}{g_{\gamma,t_n}(x)}
 \mathbb E\left[
 T_{n,\gamma}^D(V_n)g_{\gamma,t_n}(x);
 V_n\leq K
 \right] \notag\\
 &\qquad\leq
 e^{\varepsilon_{\gamma,M,K}}
 \frac{
 H\left(
 (\mathcal G_x(t_n,x)-t_n)^2
 -\frac{s}{4\pi}e^{2x}
 \right)}
 {H(t_n^2)}.
 \label{lower:eq:boundedclockcomparison}
\end{align}
On the compact rectangle, the last ratio is bounded above and bounded
away from zero.  Combining
\eqref{lower:eq:clocktailcomparison} and
\eqref{lower:eq:boundedclockcomparison}, then first sending
\(\gamma\to0\) and afterwards \(K\to\infty\), yields
\begin{align}
 \limsup_{\gamma\to0}
 \max_{1\leq n\leq N}\sup_{x\in D}
 \log\left[
 \frac{\mathcal K_n^Dg_{\gamma,t_n}(x)}
      {g_{\gamma,t_n}(x)}
 \frac{H(t_n^2)}
 {H\left(
 (\mathcal G_x(t_n,x)-t_n)^2
 -\frac{s}{4\pi}e^{2x}
 \right)}
 \right]
 \leq0.
 \label{lower:eq:spatialconsistency}
\end{align}

Finally,
\[
 t_{n+1}-t_n=\frac{\theta}{\pi}=\frac{\gamma^2}{4}.
\]
Taylor's theorem gives, uniformly in \(n\) and \(x\in D\),
\begin{align*}
 \frac{g_{\gamma,t_{n+1}}(x)}
      {g_{\gamma,t_n}(x)}
 &=
 \exp\left\{
 \mathcal G_t(t_n,x)+O_M(\gamma^2)
 \right\} \\
 &=
 \frac{
 H\left(
 (\mathcal G_x(t_n,x)-t_n)^2
 -\frac{s}{4\pi}e^{2x}
 \right)}
 {H(t_n^2)}
 e^{O_M(\gamma^2)},
\end{align*}
where the Hamilton--Jacobi equation was used in the second equality.
Together with \eqref{lower:eq:spatialconsistency}, this proves
\eqref{lower:eq:onestep}.
\end{proof}

Define the killed recursion by
\[
 f_0^D(x)=1,
 \qquad
 f_n^D=\mathcal K_n^Df_{n-1}^D,
 \qquad 1\leq n\leq N.
\]
\begin{proposition}[Laplace upper bound]\label{lower:prop:upper}
\[
 \limsup_{\gamma\to0}\gamma^2\log f_N(0)\le4\mathcal G(W/2,0).
\]
\end{proposition}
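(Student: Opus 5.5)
Split $f_N(0)$ according to whether the concatenated path stays in $D=(-M,M)$. Since every Feynman--Kac weight is at most one, unrolling the recursion of Proposition~\ref{lower:prop:exact-recursion} gives
\[
 f_N(0)\le f_N^D(0)+\mathbb P\Bigl(\sup|X|\ge M\Bigr).
\]
The first term is the contribution of paths whose segments all stay in $D$. In the second term the indicator of an exit is bounded by the total exit probability.

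By Lemma~\ref{lower:lem:exit} and \eqref{lower:eq:exitrate}, the exit term has scaled logarithm at most $-4pM+2W\log C_p$. This is as negative as desired once $M$ is large. In particular it lies below $4\mathcal G(W/2,0)-1$, which is finite and independent of $M$. Since $\gamma^2\log(a+b)\le\gamma^2\log 2+\max(\gamma^2\log a,\gamma^2\log b)$, the exit term is negligible, and it suffices to bound $\gamma^2\log f_N^D(0)$ for each fixed large $M$.

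For the killed recursion, compare $f_n^D$ with the test functions $g_{\gamma,t_{n+1}}$. The initial step uses $\mathcal G(0,x)=0$, so $g_{\gamma,t_1}=g_{\gamma,0}\equiv1=f_0^D$ on $D$. Now assume $f_{n-1}^D\le e^{(n-1)\eta_\gamma(M)}g_{\gamma,t_n}$ on $D$. The operator $\mathcal K_n^D$ is order preserving and only evaluates its argument at points of $D$. Lemma~\ref{lower:lem:consistency} therefore gives
\[
 f_n^D=\mathcal K_n^Df_{n-1}^D\le e^{(n-1)\eta_\gamma(M)}\mathcal K_n^Dg_{\gamma,t_n}\le e^{n\eta_\gamma(M)}g_{\gamma,t_{n+1}}.
\]
After $N$ steps,
\[
 \gamma^2\log f_N^D(0)\le N\gamma^2\eta_\gamma(M)+4\mathcal G(t_{N+1},0).
\]
Here $N\gamma^2=2W$ and $\eta_\gamma(M)\to0$, so the first term vanishes. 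Also $t_{N+1}=N\theta/\pi=W/2$, so the bound tends to $4\mathcal G(W/2,0)$.

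The main point to check is that Lemma~\ref{lower:lem:consistency} applies up to $n=N$. This requires $t_{n+1}=W/2$ to lie in the range $[0,N\theta/\pi]$ where $g_{\gamma,t}$ is defined and the compact bounds hold, and it does. The rest is bookkeeping with the order of limits: first let $\gamma\to0$ at fixed $M$, then choose $M$ so large that the exit rate is below $4\mathcal G(W/2,0)$. Combining the two terms by the max inequality then yields the proposition.
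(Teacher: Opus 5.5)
Your proposal is correct and follows the paper's proof. The killed recursion is bounded inductively by $e^{n\eta_\gamma(M)}g_{\gamma,t_{n+1}}$, using positivity of $\mathcal K_n^D$ and Lemma~\ref{lower:lem:consistency}, and $f_N(0)-f_N^D(0)$ is bounded by the exit probability from Lemma~\ref{lower:lem:exit}, with $M$ chosen large after $\gamma\to0$; your remark that $\mathcal K_n^D$ only evaluates its argument on $D$, so the inductive hypothesis is needed only there, makes explicit a point the paper leaves implicit.
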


\begin{proof}
Because \(t_1=0\) and \(\mathcal G(0,x)=0\),
\[
 f_0^D(x)=g_{\gamma,t_1}(x)=1.
\]
Positivity of \(\mathcal K_n^D\) and
Lemma~\ref{lower:lem:consistency} give inductively
\[
 f_n^D(x)
 \leq
 e^{n\eta_\gamma(M)}g_{\gamma,t_{n+1}}(x).
\]
Since \(t_{N+1}=N\theta/\pi\), we obtain
\begin{equation}\label{lower:eq:killedupper}
 f_N^D(0)
 \leq
 \exp\left\{
 N\eta_\gamma(M)
 +
 \frac{4}{\gamma^2}
 \mathcal G\left(\frac{N\theta}{\pi},0\right)
 \right\}.
\end{equation}
Moreover,
\[
 \gamma^2N\eta_\gamma(M)
 =
 \frac{4N\theta}{\pi}\eta_\gamma(M)
 \longrightarrow0.
\]

The killed recursion is the contribution from paths for which every
Brownian segment remains in \(D\).  Since the area factor belongs to
\([0,1]\),
\[
 0\leq f_N(0)-f_N^D(0)
 \leq
 \mathbb P\left(
 \sup_{\text{concatenated process}}|X|>M
 \right).
\]
Combining \eqref{lower:eq:killedupper} with the exit estimate from the
Lemma~\ref{lower:lem:exit}, and then choosing \(M\) sufficiently large, gives
\begin{equation}\label{lower:eq:upper}
 \limsup_{\gamma\to0}
 \left\{
 \gamma^2\log f_N(0)
 -
 4\mathcal G\left(\frac{N\theta}{\pi},0\right)
 \right\}
 \leq0.
\end{equation}

\end{proof}

\subsection{Lower bound by change of measure}

The goal is to prove a lower bound for
\(f_N(0)=\mathbb E[\exp(-sA_\gamma/\gamma^2)]\).
We will construct a probability measure
\(\widehat{\mathbb P}_\gamma\) under which the log-length process
concentrates around a suitable deterministic trajectory.  If
\(L_\gamma=\dd\widehat{\mathbb P}_\gamma/\dd\mathbb P\), then
\[
 f_N(0)
 =
 \widehat{\mathbb E}_\gamma
 \left[
 e^{-sA_\gamma/\gamma^2}L_\gamma^{-1}
 \right].
\]
Set \(Z_\gamma=e^{-sA_\gamma/\gamma^2}L_\gamma^{-1}\).
We will prove that
\[
 \frac{\gamma^2}{4}\log Z_\gamma
 -
 \mathcal G\left(\frac{N\theta}{\pi},0\right)
 \longrightarrow0
 \quad\text{in }\widehat{\mathbb P}_\gamma\text{-probability}.
\]
Consequently, for every \(\varepsilon>0\), with $\widehat{\mathbb P}_\gamma$-probability tending to one,
\[
 Z_\gamma\geq
 \exp\left\{
 \frac4{\gamma^2}
 \left[
 \mathcal G\left(\frac{N\theta}{\pi},0\right)-\varepsilon
 \right]
 \right\}.
\]
Restricting the expectation under $\widehat{\mathbb P}_\gamma$ to this event will give the
desired lower bound.

Throughout, \(\theta=\pi\gamma^2/4\), and \(N\theta/\pi<1\) is fixed
along the sequence under consideration.

\paragraph{The target trajectory and the block indexing.}

Let \(x_*(t)\), \(0\leq t\leq N\theta/\pi\), be the characteristic
satisfying \(x_*(N\theta/\pi)=0\).  Define
\[
 P(t)=\mathcal G_x(t,x_*(t)),
 \qquad
 v_*(t)
 =
 (P(t)-t)^2-\frac{s}{4\pi}e^{2x_*(t)},
 \qquad
 h(v)=\frac{H'(v)}{H(v)}.
\]
The characteristic equation is
\begin{equation}\label{lower:eq:target-characteristic}
 x_*'(t)=-2(P(t)-t)h(v_*(t)).
\end{equation}
These functions are smooth and bounded on
\([0,N\theta/\pi]\).  Moreover, the strict-domain condition gives
a constant \(\delta>0\) such that
\(v_*(t)\leq1-\delta\) throughout this interval.

We keep the block index from Section~\ref{lower:sec:clock}:
\[
 t_n=\frac{(n-1)\theta}{\pi}
     =\frac{(n-1)\gamma^2}{4},
 \qquad 1\leq n\leq N+1.
\]
The physical process traverses blocks \(N,N-1,\ldots,1\).
Write \(Y_n\) for the starting position of block \(n\), and
\(Y_{n-1}\) for its endpoint.  Thus \(Y_N=0\), and
\[
 X_v^{(n)}
 =
 Y_n+\frac{\gamma^2}{2\sqrt2}B_v^{(n)}
 -\frac{\gamma^2t_n}{2}v,
 \qquad
 Y_{n-1}=X_{V_n}^{(n)}.
\]
The pairs \((B^{(n)},V_n)\) are independent across blocks, and
\(B^{(n)}\) is independent of \(V_n\).  Set
\[
 \Delta_n=Y_{n-1}-Y_n
 =
 \frac{\gamma^2}{2\sqrt2}B_{V_n}^{(n)}
 -\frac{\gamma^2t_n}{2}V_n.
\]
The target starting position of block \(n\) is
\(x_*(t_{n+1})\), while its target endpoint is \(x_*(t_n)\).
In particular, its target displacement is
\[
 x_*(t_n)-x_*(t_{n+1})
 =
 -\frac{\gamma^2}{4}x_*'(t_n)+O(\gamma^4).
\]

\paragraph{Construction of the change of measure.}

We first explain the choice of the one-block density.
For a block starting at \(x\), the area weight multiplied by the
predicted continuation value, relative to its value at \(x\), is
\[
 \exp\left\{
 -\frac{s\gamma^2}{16\sin\theta}
 \int_0^{V_n}e^{2X_v^{(n)}}\,\dd v
 +
 \frac4{\gamma^2}
 \bigl[
 \mathcal G(t_n,x+\Delta_n)-\mathcal G(t_n,x)
 \bigr]
 \right\}.
\]
Freezing the area integrand at \(x\), linearizing the last difference,
and using \(s\gamma^2/(16\sin\theta)\to s/(4\pi)\), suggests the weight
\[
 \exp\left\{
 \frac4{\gamma^2}\mathcal G_x(t_n,x)\Delta_n
 -\frac{s}{4\pi}e^{2x}V_n
 \right\}.
\]
For the lower bound, we freeze these coefficients at the deterministic
point \(x_*(t_n)\).  This motivates the following exact definition:
\begin{equation}\label{lower:eq:control-density}
 L_n
 =
 \exp\left\{
 \frac{4P(t_n)}{\gamma^2}\Delta_n
 -\frac{s}{4\pi}e^{2x_*(t_n)}V_n
 \right\}
 \frac{H(t_n^2)}{H(v_*(t_n))}.
\end{equation}
Set $L_\gamma=\prod_{n=1}^NL_n$. Once normalization is verified,
the new probability measure is defined by
\begin{equation}\label{lower:eq:control-law}
 \frac{\dd\widehat{\mathbb P}_\gamma}{\dd\mathbb P}=L_\gamma
\end{equation}

\begin{lemma}[Normalization of the change of measure]\label{lower:lem:tilted-law}
Each $L_n$ has expectation one. The density $L_\gamma$ defines a
probability measure under which the block pairs remain independent.
\end{lemma}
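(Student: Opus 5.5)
The plan is to compute $\E L_n$ by conditioning first on the clock $V_n$ and then averaging the clock with the moment-generating function \eqref{lower:eq:clockmgf}. Fix $n$ and write $P=P(t_n)$, $e=\frac{s}{4\pi}e^{2x_*(t_n)}$. Because $\Delta_n=\frac{\gamma^2}{2\sqrt2}B^{(n)}_{V_n}-\frac{\gamma^2t_n}{2}V_n$, the exponent of $L_n$ becomes
\[
 \frac{4P}{\gamma^2}\Delta_n-eV_n=\sqrt2\,P\,B^{(n)}_{V_n}-2t_nPV_n-eV_n .
\]
Since $B^{(n)}$ is independent of $V_n$, conditioning on $V_n=v$ and using the Gaussian identity $\E[e^{\sqrt2PB_v}]=e^{P^2v}$ gives
\[
 \E\bigl[e^{\frac{4P}{\gamma^2}\Delta_n-eV_n}\,\big|\,V_n\bigr]
 =\exp\{(P^2-2t_nP-e)V_n\}
 =\exp\{(v_*(t_n)-t_n^2)V_n\}.
\]
This uses the same completion of the square, $P^2-2t_nP=(P-t_n)^2-t_n^2$, as in the formal derivation of \eqref{lower:eq:comparisonpde}.

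Next I would average over $V_n$ using \eqref{lower:eq:clockmgf} with $\lambda=v_*(t_n)-t_n^2$. That formula needs $\lambda<1-t_n^2$, which is exactly $v_*(t_n)<1$. This holds by the strict-domain condition \eqref{lower:eq:strictdomain} of Lemma~\ref{lower:lem:hj-candidate}, and in fact $v_*\le1-\delta$ on $[0,N\theta/\pi]$. Since every quantity involved is nonnegative, Tonelli's theorem justifies the iterated conditioning. The result is
\[
 \E\bigl[e^{\frac{4P}{\gamma^2}\Delta_n-eV_n}\bigr]=\frac{H(v_*(t_n))}{H(t_n^2)}.
\]
Multiplying by the deterministic normalizing factor in \eqref{lower:eq:control-density} then gives $\E L_n=1$.

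For the measure, note that $L_n$ is a nonnegative measurable function of the $n$-th block pair $(B^{(n)},V_n)$ alone. These pairs are independent across blocks, so $\E L_\gamma=\prod_n\E L_n=1$, and $\widehat{\mathbb P}_\gamma$ is a probability measure. For bounded test functions $\phi_n$ of the individual blocks, the same factorization gives
\[
 \widehat{\mathbb E}_\gamma\Bigl[\prod_n\phi_n\Bigr]=\prod_n\E[\phi_nL_n],
\]
so under $\widehat{\mathbb P}_\gamma$ the block pairs remain independent, and block $n$ has law $L_n\,\dd\mathbb P$.

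The only real obstacle is checking that the clock moment-generating function is finite at the required argument. This is supplied by the strict-domain inequality; everything else is exact Gaussian and product computation.
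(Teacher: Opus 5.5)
Your argument is correct and is the same as the paper's: condition on $V_n$, apply the Gaussian moment formula, use $P^2-2t_nP-\frac{s}{4\pi}e^{2x_*(t_n)}=v_*(t_n)-t_n^2$, average with the clock transform (finite because $v_*(t_n)<1$), and get total mass one and block independence from the fact that $L_n$ depends only on the $n$th independent pair. The paper's proof also identifies the tilted block law explicitly (Brownian drift $\sqrt2P(t_n)$ and clock transform $H(v_*(t_n)+\lambda)/H(v_*(t_n))$) for use in later lemmas, but the statement does not require this.
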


\begin{proof}
Conditionally on \(V_n=v\), the Gaussian moment formula gives
\[
 \mathbb E\left[
 \left.e^{4P(t_n)\Delta_n/\gamma^2}\right|V_n=v
 \right]
 =
 e^{(P(t_n)^2-2P(t_n)t_n)v}.
\]
Since
\[
 P(t_n)^2-2P(t_n)t_n
 -\frac{s}{4\pi}e^{2x_*(t_n)}
 =
 v_*(t_n)-t_n^2,
\]
the clock identity
\(\mathbb E e^{\lambda V_n}=H(t_n^2+\lambda)/H(t_n^2)\)
implies
\[
 \mathbb E\exp\left\{
 \frac{4P(t_n)}{\gamma^2}\Delta_n
 -\frac{s}{4\pi}e^{2x_*(t_n)}V_n
 \right\}
 =
 \frac{H(v_*(t_n))}{H(t_n^2)}.
\]
Thus \(\mathbb E L_n=1\).

Since each $L_n$ depends only on the independent pair $(B^{(n)},V_n)$,
the product has expectation one and block independence is preserved.

To identify this law explicitly, factor the density as
\begin{align*}
 L_n
 ={}&
 \exp\left\{
 \sqrt2P(t_n)B_{V_n}^{(n)}-P(t_n)^2V_n
 \right\}\\
 &\times
 \exp\left\{
 \bigl(v_*(t_n)-t_n^2\bigr)V_n
 \right\}
 \frac{H(t_n^2)}{H(v_*(t_n))}.
\end{align*}
For each fixed clock value, the first factor changes the Brownian drift
to \(\sqrt2P(t_n)\).  The second factor changes the clock distribution.
Under \(\widehat{\mathbb P}_\gamma\), we may therefore realize the blocks
using independent standard Brownian motions \(\widehat B^{(n)}\),
independent of the new clocks, such that
\begin{equation}\label{lower:eq:controlled-clock}
 \widehat{\mathbb E}_\gamma e^{\lambda V_n}
 =
 \frac{H(v_*(t_n)+\lambda)}{H(v_*(t_n))},
 \qquad \lambda<1-v_*(t_n),
\end{equation}
and
\begin{equation}\label{lower:eq:controlled-segment}
 X_v^{(n)}
 =
 Y_n+\frac{\gamma^2}{2\sqrt2}\widehat B_v^{(n)}
 +\frac{\gamma^2}{2}(P(t_n)-t_n)v.
\end{equation}
In particular,
\begin{align}
 \widehat{\mathbb E}_\gamma\Delta_n
 &=
 \frac{\gamma^2}{2}
 (P(t_n)-t_n)h(v_*(t_n)),
 \label{lower:eq:increment-moments}\\
 \operatorname{Var}_{\widehat{\mathbb P}_\gamma}(\Delta_n)
 &=
 \frac{\gamma^4}{8}h(v_*(t_n))
 +
 \frac{\gamma^4}{4}(P(t_n)-t_n)^2h'(v_*(t_n))
 \leq C\gamma^4.
 \notag
\end{align}
Here we used
\(\widehat{\mathbb E}_\gamma V_n=h(v_*(t_n))\) and
\(\operatorname{Var}_{\widehat{\mathbb P}_\gamma}(V_n)
=h'(v_*(t_n))\).

By \eqref{lower:eq:target-characteristic},
\[
 \widehat{\mathbb E}_\gamma\Delta_n
 =
 x_*(t_n)-x_*(t_{n+1})+O(\gamma^4).
\]
Thus the expected displacement under the new law agrees, to leading
order, with the displacement along the target characteristic.

\end{proof}

\begin{lemma}[Concentration around the characteristic]
\label{lower:lem:controlled-convergence}
Under $\widehat{\mathbb P}_\gamma$, in probability,
\begin{equation}\label{lower:eq:controlled-path-convergence}
 \max_{1\le n\le N}\sup_{0\le v\le V_n}
 |X_v^{(n)}-x_*(t_n)|\longrightarrow0.
\end{equation}
\end{lemma}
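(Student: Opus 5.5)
The plan is to split the deviation into two parts: a drift along block endpoints, controlled by a martingale maximal inequality, and fluctuations inside individual blocks, controlled by a union bound over the $N=O(\gamma^{-2})$ blocks. Write $Y_n$ for the block starting points as above, so $Y_N=0=x_*(t_{N+1})$ and $Y_{n-1}=Y_n+\Delta_n$. Under $\widehat{\mathbb P}_\gamma$ the pairs $(\widehat B^{(n)},V_n)$ are independent, and \eqref{lower:eq:controlled-clock}--\eqref{lower:eq:controlled-segment} give the exact laws. Define the centered sums
\[
 M_k=\sum_{n=k}^{N}\bigl(\Delta_n-\widehat{\mathbb E}_\gamma\Delta_n\bigr),\qquad 1\le k\le N.
\]
They form a martingale in the traversal order $n=N,N-1,\ldots$, and by the variance bound in Lemma~\ref{lower:lem:tilted-law} their total variance is at most $CN\gamma^4=O(\gamma^2)$. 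Doob's $L^2$ inequality then gives $\max_k|M_k|\to0$ in probability.

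For the drift, \eqref{lower:eq:increment-moments} together with the characteristic equation \eqref{lower:eq:target-characteristic} gives $\widehat{\mathbb E}_\gamma\Delta_n=x_*(t_n)-x_*(t_{n+1})+O(\gamma^4)$ uniformly in $n$. Summing from $n=N$ down to $k$, the displacements telescope, so
\[
 Y_{k-1}=x_*(t_k)+M_k+O(N\gamma^4)=x_*(t_k)+o(1)
\]
uniformly in $k$. Since $x_*$ is smooth, we also have $|x_*(t_{n+1})-x_*(t_n)|=O(\gamma^2)$. This settles the endpoints.

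It remains to control the oscillation inside each block, $\sup_{v\le V_n}|X_v^{(n)}-Y_n|$, uniformly over the $N$ blocks. By \eqref{lower:eq:controlled-segment} this oscillation is at most
\[
 \frac{\gamma^2}{2\sqrt2}\sup_{v\le V_n}|\widehat B^{(n)}_v|+C\gamma^2V_n .
\]
I would use exponential moments, as in the proof of Lemma~\ref{lower:lem:exit}. Since $v_*\le1-\delta$, the clock moment generating function in \eqref{lower:eq:controlled-clock} is finite at $\lambda=\delta/2$, uniformly in $n$, so $\widehat{\mathbb P}_\gamma(V_n>K)\le Ce^{-\delta K/2}$. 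Take $K=c\log(1/\gamma)$ with $c$ large; a union bound over the $N=O(\gamma^{-2})$ blocks then makes $\max_nV_n\le K$ with probability tending to one. On that event, the reflection principle and a Gaussian tail bound give
\[
 \widehat{\mathbb P}_\gamma\Bigl(\sup_{v\le K}|\widehat B^{(n)}_v|>\varepsilon\gamma^{-2}\Bigr)\le 4e^{-\varepsilon^2\gamma^{-4}/(2K)}.
\]
This is summable against $N$, so the maximal oscillation is at most $O(\gamma^2\log(1/\gamma))+\varepsilon$ with high probability. Combining it with the endpoint estimate and the $O(\gamma^2)$ closeness of $x_*(t_{n+1})$ and $x_*(t_n)$ yields \eqref{lower:eq:controlled-path-convergence}.

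The main obstacle I expect is bookkeeping rather than a hard estimate. The $O(\gamma^4)$ error in matching the mean increments to the characteristic has to be uniform in $n$, which requires uniform smoothness of $P$, $x_*$ and $h(v_*)$ on $[0,N\theta/\pi]$; this is provided by the strict-domain bound $v_*\le1-\delta$ in Lemma~\ref{lower:lem:hj-candidate}. The clock union bound must also stay uniform even though the clock laws vary with $n$.
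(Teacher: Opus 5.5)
Your proposal is correct and follows the paper's proof essentially step for step. The block endpoints are handled by Doob's $L^2$ maximal inequality on the centered increments together with the telescoped $O(N\gamma^4)=O(\gamma^2)$ drift error. The block interiors are handled by a union bound with clock cutoff $K\asymp\log(1/\gamma)$, using the uniform exponential clock moment from $v_*\le1-\delta$ and the reflection principle for the Brownian part; the only cosmetic difference is that you fix the exponent as $\lambda=\delta/2$, where the paper uses an unspecified $\lambda_0>0$.
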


\begin{proof}
The strict margin \(v_*(t_n)\leq1-\delta\), compactness, and
\eqref{lower:eq:controlled-clock} imply that, for some
\(\lambda_0>0\),
\begin{equation}\label{lower:eq:uniform-controlled-clock}
 \sup_{\gamma,n}
 \widehat{\mathbb E}_\gamma e^{\lambda_0V_n}<\infty.
\end{equation}
In particular, all clock moments are uniformly bounded.

We first control the block endpoints.
Since \(Y_N=0\), we have
\(Y_n=\sum_{k=n+1}^N\Delta_k\).
Independence and Doob's maximal inequality, applied in the physical
order \(N,N-1,\ldots,1\), give
\[
 \widehat{\mathbb E}_\gamma
 \max_{0\leq n\leq N}
 \left|
 \sum_{k=n+1}^N
 \bigl(\Delta_k-\widehat{\mathbb E}_\gamma\Delta_k\bigr)
 \right|^2
 \leq
 4\sum_{k=1}^N
 \operatorname{Var}_{\widehat{\mathbb P}_\gamma}(\Delta_k)
 \leq CN\gamma^4=O(\gamma^2).
\]
On the other hand, the estimate for the mean displacement telescopes:
\[
 \max_{0\leq n\leq N}
 \left|
 \sum_{k=n+1}^N\widehat{\mathbb E}_\gamma\Delta_k
 -x_*(t_{n+1})
 \right|
 \leq CN\gamma^4=O(\gamma^2).
\]
We used \(x_*(t_{N+1})=0\).
It follows that
\[
 \max_{0\leq n\leq N}
 |Y_n-x_*(t_{n+1})|\longrightarrow0
 \quad\text{in probability}.
\]

To include segment interiors, fix a sufficiently large constant
\(C_0>0\).  The exponential clock bound implies
\[
 \widehat{\mathbb P}_\gamma
 \left(
 \max_{1\leq n\leq N}V_n>C_0\log(1/\gamma)
 \right)
 \leq CN\gamma^{\lambda_0C_0}\longrightarrow0.
\]
On the complementary event, the absolute drift displacement in
\eqref{lower:eq:controlled-segment} is at most
\(C\gamma^2\log(1/\gamma)\), which tends to zero.
For any fixed \(\varepsilon>0\), the reflection principle also gives
\[
 \widehat{\mathbb P}_\gamma
 \left(
 \max_{1\leq n\leq N}
 \sup_{v\leq C_0\log(1/\gamma)}
 \frac{\gamma^2}{2\sqrt2}
 |\widehat B_v^{(n)}|>\varepsilon
 \right)
 \leq
 4N\exp\left\{
 -\frac{4\varepsilon^2}
 {C_0\gamma^4\log(1/\gamma)}
 \right\}
 \longrightarrow0.
\]
Hence the maximum displacement within any one segment tends to zero.
Since
\(\sup_n|x_*(t_{n+1})-x_*(t_n)|=O(\gamma^2)\),
this proves \eqref{lower:eq:controlled-path-convergence}.

\end{proof}

Fix a bounded continuous function $a$ on $[0,W/2]$.
\begin{lemma}[Averaging the clocks]\label{lower:lem:clock-averaging}
Under $\widehat{\mathbb P}_\gamma$, in probability,
\begin{equation}\label{lower:eq:controlled-clock-lln}
 \frac{\gamma^2}{4}\sum_{n=1}^N
 a(t_n)[V_n-h(v_*(t_n))]\longrightarrow0.
\end{equation}
\end{lemma}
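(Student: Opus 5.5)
This is a weak law of large numbers for independent, centered, weighted clock variables, and a second-moment (Chebyshev) argument should suffice. By Lemma~\ref{lower:lem:tilted-law}, the clocks $V_1,\ldots,V_N$ are independent under $\widehat{\mathbb P}_\gamma$. Each has moment generating function given by \eqref{lower:eq:controlled-clock}. Differentiating that formula at $\lambda=0$ gives
\[
 \widehat{\mathbb E}_\gamma V_n=h(v_*(t_n)),\qquad
 \operatorname{Var}_{\widehat{\mathbb P}_\gamma}(V_n)=h'(v_*(t_n)).
\]
So the summands $a(t_n)[V_n-h(v_*(t_n))]$ are independent and centered, and the sum in \eqref{lower:eq:controlled-clock-lln} has mean zero.

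The key step is a uniform variance bound. The strict-domain margin $v_*(t)\le1-\delta$ holds on $[0,N\theta/\pi]$, and $v_*$ is bounded below on this compact interval, since $P$ and $x_*$ are bounded and continuous. Hence $v_*(t_n)$ lies in a fixed compact subset of $(-\infty,1)$, on which $h'$ is continuous and therefore bounded by some $C_\delta$. If one prefers to avoid continuity of $h'$, the same conclusion follows from the uniform exponential clock bound \eqref{lower:eq:uniform-controlled-clock}, which controls all second moments uniformly in $\gamma$ and $n$. With $\|a\|_\infty<\infty$, independence then gives
\[
 \widehat{\mathbb E}_\gamma\Bigl|\frac{\gamma^2}{4}\sum_{n=1}^N a(t_n)[V_n-h(v_*(t_n))]\Bigr|^2
 =\frac{\gamma^4}{16}\sum_{n=1}^N a(t_n)^2h'(v_*(t_n))
 \le \frac{\gamma^4}{16}N\|a\|_\infty^2C_\delta .
\]
Since $N\gamma^2=2W$, the right side is $O(\gamma^2)\to0$. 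Chebyshev's inequality then gives the convergence in probability.

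The only point that needs care is the uniformity of the variance bound over $n\le N$ and small $\gamma$. This is exactly what the strict margin $\delta$ supplies. The grid points $t_n\le N\theta/\pi=W/2$ stay in the fixed interval on which $x_*$, $P$ and $v_*$ are defined, so no further obstacle is expected. Continuity of $a$ is not needed here; boundedness suffices. Continuity will matter only when these sums are later compared with Riemann integrals.
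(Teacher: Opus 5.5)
Your proposal is correct and follows the paper's proof: independence of the clocks under $\widehat{\mathbb P}_\gamma$, together with the uniform variance bound $h'(v_*(t_n))\le C$ supplied by the strict margin $v_*\le 1-\delta$, gives variance $O(\gamma^4 N)=O(\gamma^2)$, and Chebyshev's inequality concludes. Your observation that only boundedness of $a$ is needed here, with continuity entering only in the later Riemann-sum step, is also accurate.
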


\begin{proof}
Independence and the uniform second-moment bound
give
\[
 \operatorname{Var}_{\widehat{\mathbb P}_\gamma}
 \left(
 \frac{\gamma^2}{4}\sum_{n=1}^N
 a(t_n)\bigl[V_n-h(v_*(t_n))\bigr]
 \right)
 \leq C\gamma^4N=O(\gamma^2).
\]
This proves \eqref{lower:eq:controlled-clock-lln}.  Taking \(a=1\)
and using a Riemann sum, we obtain
\[
 \frac{\gamma^2}{4}\sum_{n=1}^N V_n
 \longrightarrow
 \int_0^{N\theta/\pi}h(v_*(t))\,\dd t.
\]
In particular, the scaled total clock is bounded in probability.

\end{proof}

\begin{lemma}[Replacing area by its value along the characteristic]
\label{lower:lem:area-freezing}
Under $\widehat{\mathbb P}_\gamma$, in probability,
\begin{equation}\label{lower:eq:controlled-area-freezing}
 \frac{\gamma^2}{4}\sum_{n=1}^N
 \left[\frac{s}{4\pi}e^{2x_*(t_n)}V_n
 -\frac{s\gamma^2}{16\sin\theta}\int_0^{V_n}e^{2X_v^{(n)}}\,\dd v
 \right]\longrightarrow0.
\end{equation}
\end{lemma}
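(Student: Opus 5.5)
The difference will be split into two pieces. Write $c_\gamma=s\gamma^2/(16\sin\theta)$, so that $c_\gamma\to s/(4\pi)$. Each summand in \eqref{lower:eq:controlled-area-freezing} decomposes as
\[
 \Bigl(\frac{s}{4\pi}-c_\gamma\Bigr)e^{2x_*(t_n)}V_n
 +c_\gamma\int_0^{V_n}\bigl(e^{2x_*(t_n)}-e^{2X_v^{(n)}}\bigr)\,\dd v .
\]
After multiplying by $\gamma^2/4$ and summing over $n$, each piece will be shown to tend to zero in $\widehat{\mathbb P}_\gamma$-probability.

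\emph{First piece.} The function $x_*$ is bounded on $[0,W/2]$, so $e^{2x_*(t_n)}\le C$. The contribution is therefore bounded by
\[
 |c_\gamma-s/(4\pi)|\,C\,\frac{\gamma^2}{4}\sum_n V_n .
\]
By Lemma~\ref{lower:lem:clock-averaging} with $a=1$, the scaled total clock $\frac{\gamma^2}{4}\sum_nV_n$ is bounded in probability. The prefactor is deterministic and of order $O(\gamma^4)$, so this piece tends to zero.

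\emph{Second piece.} Let $\rho_\gamma=\max_n\sup_{v\le V_n}|X_v^{(n)}-x_*(t_n)|$. By Lemma~\ref{lower:lem:controlled-convergence}, $\rho_\gamma\to0$ in probability. On the event $\{\rho_\gamma\le1\}$, the mean value theorem gives
\[
 |e^{2X_v^{(n)}}-e^{2x_*(t_n)}|\le 2e^{2(\|x_*\|_\infty+1)}\rho_\gamma
 \quad\text{for } v\le V_n .
\]
The second contribution is then at most
\[
 C\rho_\gamma\,c_\gamma\,\frac{\gamma^2}{4}\sum_n V_n ,
\]
which is the product of a term tending to zero in probability and a term that is tight, again by Lemma~\ref{lower:lem:clock-averaging}. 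The complementary event $\{\rho_\gamma>1\}$ has probability tending to zero, so it does not affect convergence in probability.

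\emph{Main obstacle.} The only delicate point is that the clocks $V_n$ are unbounded, so a pointwise bound on the integrand is not enough by itself. The argument handles this by relying on the uniform path concentration, which holds over the full random clock interval $[0,V_n]$, together with tightness of the scaled total clock. Both were established earlier, so the argument is a direct combination and requires no new estimates. Combining the two pieces proves \eqref{lower:eq:controlled-area-freezing}.
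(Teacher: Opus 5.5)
Your proposal is correct and matches the paper's proof. Both split the error into the coefficient discrepancy $|s\gamma^2/(16\sin\theta)-s/(4\pi)|$ and the path deviation from $x_*(t_n)$, bound the path term by a Lipschitz estimate for $e^{2x}$ on a compact set using Lemma~\ref{lower:lem:controlled-convergence}, and multiply by the tight scaled clock $\frac{\gamma^2}{4}\sum_n V_n$ from Lemma~\ref{lower:lem:clock-averaging}; the only difference is that the paper bundles the two pieces into a single bound.
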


\begin{proof}
By Lemma~\ref{lower:lem:controlled-convergence},
with probability tending to one, all the paths
\(X_v^{(n)}\) and the characteristic lie in one fixed compact
interval.  On this event, \(e^{2x}\) is uniformly Lipschitz, and
\begin{align*}
 &\frac{\gamma^2}{4}\sum_{n=1}^N
 \left|
 \frac{s}{4\pi}e^{2x_*(t_n)}V_n
 -
 \frac{s\gamma^2}{16\sin\theta}
 \int_0^{V_n}e^{2X_v^{(n)}}\,\dd v
 \right|\\
 &\quad\leq
 C\left[
 \left|
 \frac{s\gamma^2}{16\sin\theta}-\frac{s}{4\pi}
 \right|
 +
 \max_{1\leq n\leq N}\sup_{v\leq V_n}
 |X_v^{(n)}-x_*(t_n)|
 \right]
 \frac{\gamma^2}{4}\sum_{n=1}^N V_n.
\end{align*}
The bracket tends to zero in probability, while the last factor is
bounded in probability.  This proves
\eqref{lower:eq:controlled-area-freezing}.
\end{proof}

\paragraph{The weighted likelihood along the characteristic.}

The exact Brownian-clock representation and
\eqref{lower:eq:control-law} give
\begin{equation}\label{lower:eq:controlled-change-measure}
 f_N(0)=\widehat{\mathbb E}_\gamma[Z_\gamma],
 \qquad
 Z_\gamma
 =
 \exp\left\{
 -\frac{s\gamma^2}{16\sin\theta}
 \sum_{n=1}^N\int_0^{V_n}e^{2X_v^{(n)}}\,\dd v
 \right\}
 \prod_{n=1}^NL_n^{-1}.
\end{equation}
\begin{lemma}[Weighted likelihood]\label{lower:lem:likelihood}
Under $\widehat{\mathbb P}_\gamma$, in probability,
\[
 \frac{\gamma^2}{4}\log Z_\gamma\longrightarrow\mathcal G(W/2,0).
\]
\end{lemma}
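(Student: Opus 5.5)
The plan is to write $\log Z_\gamma$ explicitly and let each of the three preceding lemmas dispose of one random piece. By \eqref{lower:eq:control-density},
\[
 \frac{\gamma^2}{4}\log Z_\gamma
 =-\frac{\gamma^2}{4}\sum_{n=1}^N\frac{s\gamma^2}{16\sin\theta}\int_0^{V_n}e^{2X_v^{(n)}}\dd v
 -\sum_{n=1}^N P(t_n)\Delta_n
 +\frac{\gamma^2}{4}\sum_{n=1}^N\frac{s}{4\pi}e^{2x_*(t_n)}V_n
 +\frac{\gamma^2}{4}\sum_{n=1}^N\log\frac{H(v_*(t_n))}{H(t_n^2)}.
\]
Lemma~\ref{lower:lem:area-freezing} shows that the first and third sums cancel up to $o(1)$ in probability. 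The last sum is deterministic. It is a Riemann sum with mesh $\gamma^2/4$ of a continuous function, so it converges to $\int_0^{W/2}[\log H(v_*(t))-\log H(t^2)]\dd t$. Here $N\theta/\pi=W/2$ under the integer-block condition.

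Next I would treat $\sum_n P(t_n)\Delta_n$. I would split $\Delta_n=\widehat{\mathbb E}_\gamma\Delta_n+(\Delta_n-\widehat{\mathbb E}_\gamma\Delta_n)$. The fluctuation part has variance at most $\sup|P|^2\sum_n\operatorname{Var}(\Delta_n)\le CN\gamma^4=O(\gamma^2)$, by independence and \eqref{lower:eq:increment-moments}, so it is negligible. For the mean part, \eqref{lower:eq:increment-moments} and \eqref{lower:eq:target-characteristic} give $\widehat{\mathbb E}_\gamma\Delta_n=-\frac{\gamma^2}{4}x_*'(t_n)+O(\gamma^4)$. Hence the mean part converges to $-\int_0^{W/2}P(t)x_*'(t)\dd t$. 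Together with the previous paragraph,
\[
 \frac{\gamma^2}{4}\log Z_\gamma\to
 \int_0^{W/2}\Bigl\{P(t)x_*'(t)+\log H(v_*(t))-\log H(t^2)\Bigr\}\dd t
\]
in probability.

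The remaining step is to identify this integral with $\mathcal G(W/2,0)$; this is where I expect the main difficulty. Recall $\mathcal G=\mathcal U+tx-\int_0^t\log H(a^2)\dd a$ and $\mathcal U_x=p_C$. Hence $P(t)=\mathcal G_x(t,x_*(t))=p_C(t)+t$ along the characteristic ending at $(W/2,0)$. Also $\mathcal H(x,p)=-x+\log H(v)$, with $v_C=v_*$ along this characteristic.

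The defining integral for $\mathcal U(W/2,0)$ is $\int_0^{W/2}\{-x_*+\log H(v_*)+p_Cx_*'\}\dd t$. The identity needed is therefore
\[
 \int_0^{W/2}\bigl(t\,x_*'(t)-x_*(t)\bigr)\dd t+\int_0^{W/2}\log H(v_*)\dd t+\mathcal U\text{-extra}=\mathcal G(W/2,0).
\]
Integrating by parts gives $\int_0^{W/2} t\,x_*'\dd t=\tfrac W2 x_*(W/2)-\int_0^{W/2} x_*\dd t=-\int_0^{W/2} x_*\dd t$, because $x_*(W/2)=0$.

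I would check the bookkeeping carefully here, since the $-x$ term in $\mathcal H$ and the $tx$ shift must combine exactly. The first-variation formula gives $\frac{\dd}{\dd t}\mathcal G(t,x_*(t))=\mathcal G_t+\mathcal G_x x_*'=\log H(v_*)-\log H(t^2)+P x_*'$ directly from the HJ equation \eqref{lower:eq:comparisonpde}. This is the cleanest route: it makes the integral equal to $\mathcal G(W/2,0)-\mathcal G(0,x_*(0))=\mathcal G(W/2,0)$, using $\mathcal G(0,\cdot)=0$ and the smoothness from Lemma~\ref{lower:lem:hj-candidate}.
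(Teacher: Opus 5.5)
Your proposal is correct and follows the paper's proof. You use the same three-term decomposition of $\frac{\gamma^2}{4}\log Z_\gamma$ and the same mean/fluctuation split of $\sum_n P(t_n)\Delta_n$; your $-\frac{\gamma^2}{4}x_*'(t_n)$ is in fact exactly $\widehat{\mathbb E}_\gamma\Delta_n$, with no $O(\gamma^4)$ error. You also identify the limit the same way, via the chain rule and the HJ equation along $x_*$. The detour through $\mathcal U$ and its display with the undefined ``$\mathcal U$-extra'' term is not a usable step and can be dropped, although that route also closes: the only discrepancy with $\mathcal G(W/2,0)$ is $\int_0^{W/2}(t\,x_*'+x_*)\dd t=[t\,x_*]_0^{W/2}=0$.
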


\begin{proof}
Taking logarithms gives
\begin{align}
 \frac{\gamma^2}{4}\log Z_\gamma
 ={}&
 \frac{\gamma^2}{4}\sum_{n=1}^N
 \log\frac{H(v_*(t_n))}{H(t_n^2)}
 -
 \sum_{n=1}^NP(t_n)\Delta_n
 \notag\\
 &+
 \frac{\gamma^2}{4}\sum_{n=1}^N
 \left[
 \frac{s}{4\pi}e^{2x_*(t_n)}V_n
 -
 \frac{s\gamma^2}{16\sin\theta}
 \int_0^{V_n}e^{2X_v^{(n)}}\,\dd v
 \right].
 \label{lower:eq:controlled-loglikelihood}
\end{align}
The first term is a deterministic Riemann sum:
\[
 \frac{\gamma^2}{4}\sum_{n=1}^N
 \log\frac{H(v_*(t_n))}{H(t_n^2)}
 \longrightarrow
 \int_0^{N\theta/\pi}
 \log\frac{H(v_*(t))}{H(t^2)}\,\dd t.
\]
For the second term, decompose
\[
 \sum_{n=1}^NP(t_n)\Delta_n
 =
 \sum_{n=1}^NP(t_n)\widehat{\mathbb E}_\gamma\Delta_n
 +
 \sum_{n=1}^NP(t_n)
 \bigl(\Delta_n-\widehat{\mathbb E}_\gamma\Delta_n\bigr).
\]
The centered sum has variance at most \(CN\gamma^4=O(\gamma^2)\),
so it converges to zero in \(L^2\).  The mean sum satisfies
\[
 \sum_{n=1}^NP(t_n)\widehat{\mathbb E}_\gamma\Delta_n
 =
 \frac{\gamma^2}{2}\sum_{n=1}^N
 P(t_n)(P(t_n)-t_n)h(v_*(t_n))
 \longrightarrow
 2\int_0^{N\theta/\pi}
 P(t)(P(t)-t)h(v_*(t))\,\dd t.
\]
The last line of \eqref{lower:eq:controlled-loglikelihood}
converges to zero by
\eqref{lower:eq:controlled-area-freezing}.  Therefore
\[
 \frac{\gamma^2}{4}\log Z_\gamma
 \longrightarrow
 \int_0^{N\theta/\pi}
 \left[
 \log\frac{H(v_*(t))}{H(t^2)}
 -
 2P(t)(P(t)-t)h(v_*(t))
 \right]\dd t
\]
in $\widehat{\mathbb P}_\gamma$-probability.

To identify the integral, differentiate \(\mathcal G\) along the
characteristic.  The Hamilton--Jacobi equation and
\eqref{lower:eq:target-characteristic} give
\begin{align*}
 \frac{\dd}{\dd t}\mathcal G(t,x_*(t))
 &=
 \mathcal G_t(t,x_*(t))
 +\mathcal G_x(t,x_*(t))x_*'(t)\\
 &=
 \log\frac{H(v_*(t))}{H(t^2)}
 -
 2P(t)(P(t)-t)h(v_*(t)).
\end{align*}
Since \(\mathcal G(0,x)=0\) and \(x_*(N\theta/\pi)=0\), we conclude
\begin{equation}\label{lower:eq:controlled-likelihood-limit}
 \frac{\gamma^2}{4}\log Z_\gamma
 -
 \mathcal G\left(\frac{N\theta}{\pi},0\right)
 \longrightarrow0
 \quad\text{in }\widehat{\mathbb P}_\gamma\text{-probability}.
\end{equation}

\end{proof}

\begin{proposition}[Laplace lower bound]\label{lower:prop:lower}
\[
 \liminf_{\gamma\to0}\gamma^2\log f_N(0)\ge4\mathcal G(W/2,0).
\]
\end{proposition}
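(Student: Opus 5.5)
The plan is to combine the exact change-of-measure identity \eqref{lower:eq:controlled-change-measure} with Lemma~\ref{lower:lem:likelihood}, keeping only the contribution of the event on which $Z_\gamma$ is not too small. First, verify that \eqref{lower:eq:controlled-change-measure} holds exactly. The product $\prod_n L_n$ is a strictly positive density. It depends only on the independent block pairs $(B^{(n)},V_n)$, and by Lemma~\ref{lower:lem:tilted-law} its expectation is one. Hence for any nonnegative functional $\Phi$ of the blocks,
\[
 \mathbb E[\Phi]=\widehat{\mathbb E}_\gamma[\Phi L_\gamma^{-1}].
\]
Apply this with $\Phi=e^{-sA_\gamma/\gamma^2}$. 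By Proposition~\ref{lower:prop:exact-recursion}, this $\Phi$ equals $\exp\{-\frac{s\gamma^2}{16\sin\theta}\sum_n\int_0^{V_n}e^{2X^{(n)}_v}\dd v\}$ evaluated along the concatenated path started at $Y_N=0$. The identity $f_N(0)=\widehat{\mathbb E}_\gamma[Z_\gamma]$ follows.

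Next, fix $\varepsilon>0$ and let
\[
 E_\gamma=\Bigl\{\tfrac{\gamma^2}{4}\log Z_\gamma\ge\mathcal G(N\theta/\pi,0)-\varepsilon\Bigr\}.
\]
Lemma~\ref{lower:lem:likelihood} states the convergence with limit $\mathcal G(W/2,0)$. Along the integer-block sequence, $N\theta/\pi=W/2$ exactly, so this is the same as \eqref{lower:eq:controlled-likelihood-limit}. Therefore $\widehat{\mathbb P}_\gamma(E_\gamma)\to1$. Since $Z_\gamma\ge0$,
\[
 f_N(0)\ge\widehat{\mathbb E}_\gamma[Z_\gamma;E_\gamma]
 \ge\widehat{\mathbb P}_\gamma(E_\gamma)\exp\Bigl\{\frac4{\gamma^2}\bigl[\mathcal G(W/2,0)-\varepsilon\bigr]\Bigr\}.
\]
Take $\gamma^2\log$ of both sides. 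For small $\gamma$ we have $\widehat{\mathbb P}_\gamma(E_\gamma)\ge1/2$, so its contribution $\gamma^2\log\widehat{\mathbb P}_\gamma(E_\gamma)$ vanishes in the limit. This gives $\liminf\gamma^2\log f_N(0)\ge4\mathcal G(W/2,0)-4\varepsilon$, and letting $\varepsilon\downarrow0$ proves the claim. The case $s=0$ is trivial, since then $f_N\equiv1$ and $\mathcal G\equiv0$.

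All the analytic work is contained in Lemma~\ref{lower:lem:likelihood}, so the proof itself should be short. The one point needing care is that the identity $\mathbb E[\Phi]=\widehat{\mathbb E}_\gamma[\Phi L_\gamma^{-1}]$ is used for a functional of the whole Brownian paths, not only of the endpoint pairs $(\Delta_n,V_n)$. I would check that $L_n$, a function of $(B^{(n)}_{V_n},V_n)$, defines the path law described in \eqref{lower:eq:controlled-segment}, with Brownian motion plus constant drift and a tilted clock. This is the Cameron--Martin/Girsanov identification already asserted in Lemma~\ref{lower:lem:tilted-law}, since for fixed $v$ the factor $e^{\sqrt2 PB_v-P^2v}$ is the terminal value of the exponential martingale.
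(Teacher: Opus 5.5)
Your proposal is correct and follows essentially the same route as the paper. Both restrict $f_N(0)=\widehat{\mathbb E}_\gamma[Z_\gamma]$ to the event $\{\frac{\gamma^2}{4}\log Z_\gamma\ge\mathcal G(W/2,0)-\varepsilon\}$, whose $\widehat{\mathbb P}_\gamma$-probability tends to one by Lemma~\ref{lower:lem:likelihood}, then take $\gamma^2\log$ and let $\gamma\to0$ and $\varepsilon\downarrow0$.

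One minor remark: the identity $\mathbb E[\Phi]=\widehat{\mathbb E}_\gamma[\Phi L_\gamma^{-1}]$ needs nothing beyond $L_\gamma>0$. The Cameron--Martin identification of the path law that you flag is used inside the concentration lemma, not in this step.
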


\begin{proof}
For \(\varepsilon>0\), let
\[
 E_{\gamma,\varepsilon}
 =
 \left\{
 \frac{\gamma^2}{4}\log Z_\gamma
 \geq
 \mathcal G\left(\frac{N\theta}{\pi},0\right)-\varepsilon
 \right\}.
\]
Then
\(\widehat{\mathbb P}_\gamma(E_{\gamma,\varepsilon})\to1\).
Restricting the expectation in
\eqref{lower:eq:controlled-change-measure} to this event gives
\[
 f_N(0)
 \geq
 \exp\left\{
 \frac4{\gamma^2}
 \left[
 \mathcal G\left(\frac{N\theta}{\pi},0\right)-\varepsilon
 \right]
 \right\}
 \widehat{\mathbb P}_\gamma(E_{\gamma,\varepsilon}).
\]
Taking logarithms, multiplying by \(\gamma^2/4\), and then letting
\(\gamma\to0\) and \(\varepsilon\downarrow0\), we obtain
\[
 \liminf_{\gamma\to0}
 \left[
 \frac{\gamma^2}{4}\log f_N(0)
 -
 \mathcal G\left(\frac{N\theta}{\pi},0\right)
 \right]
 \geq0.
\]
\end{proof}

\begin{proposition}[Convergence of the Laplace transform]
\label{lower:prop:convergence}
The limit $\Psi_W(s)$ exists and equals $4\mathcal G(W/2,0)$.
\end{proposition}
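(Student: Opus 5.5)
The plan is to combine Propositions~\ref{lower:prop:upper} and~\ref{lower:prop:lower}, which bound $\gamma^2\log f_N(0)$ from above and below by the same constant. Both are stated along the integer-block sequence $N=2W/\gamma^2\in\mathbb N$. On that sequence $N\theta/\pi=W/2$ exactly, so the quantity $4\mathcal G(N\theta/\pi,0)$ appearing in their proofs is the fixed number $4\mathcal G(W/2,0)$. Since $\Psi_{\gamma,W}(s)=\gamma^2\log f_N(0)$ by \eqref{lower:eq:def-fn} and Proposition~\ref{lower:prop:exact-recursion}, the two propositions squeeze $\Psi_{\gamma,W}(s)$ and give $\lim\Psi_{\gamma,W}(s)=4\mathcal G(W/2,0)$ for every fixed $s>0$. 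For $s=0$, $f_N\equiv1$ and $\mathcal G\equiv0$ (the characteristic construction degenerates, but $\mathcal G_t=\log H(t^2)-\log H(t^2)=0$ with zero initial data), so both sides vanish.

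One point needs checking: the upper-bound proof sends $M\to\infty$ after $\gamma\to0$. I would make this step explicit. From \eqref{lower:eq:killedupper} and \eqref{lower:eq:exitrate}, for each fixed $M$,
\[
\limsup_{\gamma\to0}\gamma^2\log f_N(0)\le\max\Bigl\{4\mathcal G(W/2,0),\,-4pM+2W\log C_p\Bigr\},
\]
using $\gamma^2\log(a+b)\le\gamma^2\log 2+\max\{\gamma^2\log a,\gamma^2\log b\}$. Because $\mathcal G(W/2,0)$ is finite and independent of $M$, taking $M$ large makes the second entry irrelevant. The lower bound requires no extra limit beyond $\varepsilon\downarrow0$.

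The main point to verify, and the only potential obstacle, is that the constants in Lemma~\ref{lower:lem:consistency}, Lemma~\ref{lower:lem:exit}, and the change-of-measure lemmas are uniform along the sequence. This holds because all of them are stated on the fixed compact time interval $[0,W/2]$, where Lemma~\ref{lower:lem:hj-candidate} provides smoothness and the strict margin $v\le1-\delta_M$. Finiteness of the limit follows from $\mathcal G$ being smooth on $[0,1)\times\mathbb R$ and $W/2<1$.

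Removing the integer-block restriction is deferred to Section~\ref{sec:all-gamma}, as announced. The regularity statement of Theorem~\ref{thm:intro-transforms} for $\Psi_W$, namely $C^1$ in $s$ and continuity at $0$, would follow separately from smooth dependence of the characteristic parameter on $s$, and is not part of this proposition.
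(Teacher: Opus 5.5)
Your proposal is correct and is the same argument as the paper's, which simply combines Propositions~\ref{lower:prop:upper} and~\ref{lower:prop:lower} along the integer-block sequence, where $N\theta/\pi=W/2$. Your explicit handling of the $M\to\infty$ step via $\gamma^2\log(a+b)\le\gamma^2\log 2+\max\{\gamma^2\log a,\gamma^2\log b\}$, and of the case $s=0$, spells out details the paper leaves implicit.
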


\begin{proof}
Propositions~\ref{lower:prop:upper} and~\ref{lower:prop:lower} give
\begin{equation}\label{lower:eq:limit}
 \gamma^2\log f_N(0)
 -
 4\mathcal G\left(\frac{N\theta}{\pi},0\right)
 \longrightarrow0.
\end{equation}
\end{proof}

\subsection{The limiting Laplace transform}\label{sec:lower-results}
For $s>0$, let $C>1$ be a solution of
\begin{equation}\label{lower:eq:scalar}
 v=\chi^{-1}(C\cos(\pi W/2)),\qquad
 s=\frac4\pi(C^2-1)H(v)^2.
\end{equation}
Here $C\cos(\pi W/2)>-1$ is required; if $W>1$ this means
$C<-1/\cos(\pi W/2)$, while there is no upper restriction if $W\le1$.
Existence and uniqueness follow from Lemma~\ref{lower:lem:hj-candidate}.

\begin{theorem}[Negative-Laplace limit]\label{thm:lower-transform}
For every $s\ge0$, the finite limit
\[
 \Psi_W(s)=\lim_{\gamma\downarrow0}\Psi_{\gamma,W}(s)
\]
exists along $N=2W/\gamma^2\in\mathbb N$. For $s>0$,
\begin{equation}\label{lower:eq:derivative}
 \boxed{\Psi_W'(s)=\frac1s
 \left(W-\frac{2C\sin(\pi W/2)H(v)}\pi\right).}
\end{equation}
The right-hand side is negative. Together with $\Psi_W(0)=0$ it
determines $\Psi_W$ by a one-dimensional integral. In particular,
\[
 \Psi_W'(0+)=-\frac{\pi W-\sin(\pi W)}{2\pi^2W^2}.
\]
\end{theorem}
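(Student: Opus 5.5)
The limit itself is Proposition~\ref{lower:prop:convergence}, with $\Psi_W(s)=4\mathcal G(W/2,0)$ for $s>0$. For $s=0$ the transform is identically zero, so existence is trivial. What remains is the derivative formula \eqref{lower:eq:derivative}, its sign, and the value at $0+$.

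\textbf{Derivative via the characteristic action.} I will differentiate $\mathcal G(W/2,0)$ in $s$ using the defining integral of $\mathcal U$. Write $\mathcal G=\mathcal U+tx-\int_0^t\log H(a^2)\,\dd a$. At $x=0$ only $\mathcal U(W/2,0)$ depends on $s$. The only explicit $s$-dependence of the Hamiltonian $\mathcal H(x,p)=-x+\log H(p^2-\frac{s}{4\pi}e^{2x})$ is
\[
 \partial_s\mathcal H=-\frac{e^{2x}}{4\pi}h(v).
\]
By the first-variation argument already used in Lemma~\ref{lower:lem:hj-candidate}, the free initial endpoint has zero momentum and the terminal endpoint $x=0$ is fixed. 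Hence the variation of the characteristic contributes nothing, and
\[
 \partial_s\mathcal U(W/2,0)=-\frac1{4\pi}\int_0^{W/2}e^{2x_*(a)}h(v_*(a))\,\dd a .
\]
To evaluate this integral I will use the characteristic equation $p'=-1-\frac{s}{2\pi}e^{2x}h(v)$. This gives
\[
 \frac{s}{2\pi}\int_0^{W/2}e^{2x}h(v)\,\dd a=-\frac W2-p_C(W/2)+p_C(0).
\]
Since $p_C(0)=0$ and $p_C(W/2)=-C\sin(\pi W/2)H(v)/\pi$, we get
\[
 \Psi_W'(s)=4\,\partial_s\mathcal U=\frac1s\Bigl(-\frac W2\cdot2+\frac{2C\sin(\pi W/2)H(v)}{\pi}\Bigr)\cdot(-1),
\]
which I will check against the boxed expression with the signs tracked carefully. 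The constraint $x_C(W/2)=0$, i.e.\ $-4\pi z_C=s$ with $z_C=(1-C^2)H(v)^2/\pi^2$, is exactly \eqref{lower:eq:scalar}. The $C^1$ regularity on $(0,\infty)$ then follows because $C$ depends smoothly on $s$: the map $C\mapsto-z_C(W/2)$ is a strictly monotone bijection with nonzero derivative, by the same lemma.

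\textbf{Sign.} Negativity of the right-hand side is cleanest from the integral representation: $\partial_s\mathcal U=-\frac1{4\pi}\int e^{2x}h(v)<0$ because $h>0$. Equivalently, $W>2C\sin(\pi W/2)H(v)/\pi$.

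\textbf{Limit $s\downarrow0$ (the main obstacle).} As $s\downarrow0$ we have $C\downarrow1$ and $v\to0$. I expect this step to be the delicate one, since both the numerator and $s$ vanish. Rather than expanding the boxed formula to second order in $C-1$, I will use
\[
 \Psi_W'(s)=-\frac1\pi\int_0^{W/2}e^{2x_*}h(v_*)\,\dd a .
\]
As $s\to0$, the characteristic ending at $0$ has $p\to$ the solution of $p'=-1$, so $p=-t$. The term $\frac{s}{4\pi}e^{2x}$ is not small, so instead I track $w=\frac{s}{4\pi}e^{2x}$, which stays finite, and $v=t^2-w$. The limit will be obtained from the scalar equations. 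Set $C=1+\epsilon$ and expand
\[
 \chi(v)=\cosh(\pi\sqrt{-v}),\qquad \chi(v)=C\cos(\pi a),
\]
so that $v\approx-\frac{2}{\pi^2}(C\cos\pi a-\cos\pi a)/\dots$. More simply, I will solve the $s=0$ linearization: $e^{2x}$ along characteristics satisfies $(e^{2x})'=-4ph\,e^{2x}$ with $p=-a$ and $h=h(a^2)$. Integrating gives
\[
 e^{2x_*(a)}=\exp\Bigl(-4\int_a^{W/2}r\,h(r^2)\,\dd r\Bigr)=\frac{H(a^2)^2}{H(W^2/4)^2},
\]
because $\frac{\dd}{\dd r}\log H(r^2)=2r\,h(r^2)$. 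Then
\[
 \Psi_W'(0+)=-\frac{1}{\pi H(W^2/4)^2}\int_0^{W/2}H(a^2)H'(a^2)\,\dd a .
\]
Evaluating this with $H(a^2)=\pi a/\sin\pi a$ should produce $-(\pi W-\sin\pi W)/(2\pi^2W^2)$. This is an elementary check.

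Justifying the interchange of the limit with the integral needs uniform convergence of the characteristics as $C\downarrow1$ on $[0,W/2]$. This follows from the explicit analytic formulas.
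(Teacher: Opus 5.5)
Your argument is correct in substance. It reaches \eqref{lower:eq:derivative} and the value at $0+$ by a different route from the paper, but it omits one step of the statement.

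\textbf{Comparison with the paper.} The paper never differentiates the action in $s$. It notes that the characteristic construction depends on $(s,x)$ only through $se^{2x}$, so $\mathcal G_s(t,x)=\mathcal G_{se^{2x}}(t,0)$ and
\[
 \Psi_W'(s)=\frac2s\,\partial_x\mathcal G_s(W/2,0)=\frac2s\Bigl(p_C(W/2)+\frac W2\Bigr),
\]
reusing $\mathcal U_x=p_C$ from Lemma~\ref{lower:lem:hj-candidate}. Negativity then comes from $p_C'<-1$. The value $\Psi_W'(0+)$ is obtained by L'H\^opital in $C$, using $\partial_C[-z_C(W/2)]$ and $\partial_C p_C(W/2)$ at $C=1$.

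You instead apply the first-variation formula to the parameter $s$; the boundary term $[p\,\delta x]_0^{W/2}$ vanishes since $p_C(0)=0$ and the endpoint $x=0$ is fixed. You then integrate the $p$-equation. This is equally rigorous and yields
\[
 -\Psi_W'(s)=\frac1\pi\int_0^{W/2}e^{2x_*(t)}h(v_*(t))\,\dd t .
\]
This representation makes the sign immediate and avoids the $0/0$ form at $s=0$. It also identifies $-\Psi_W'(s)$ as the in-probability limit of $A_\gamma$ under $\widehat{\mathbb P}_\gamma$, by Lemmas~\ref{lower:lem:clock-averaging} and~\ref{lower:lem:area-freezing} with $a=e^{2x_*}$. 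The paper's scaling trick is shorter.

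Your ``elementary check'' does come out right. In $\int_0^{W/2}H(a^2)H'(a^2)\,\dd a$, substitute $y=\pi a$ to get $\frac\pi2\int_0^{\pi W/2}(\sin y-y\cos y)\sin^{-3}y\,\dd y$. An antiderivative is $(y-\sin y\cos y)/(2\sin^2 y)$. Dividing by $\pi H(W^2/4)^2$ gives $(\pi W-\sin\pi W)/(2\pi^2W^2)$.

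\textbf{The missing step.} You never show $\Psi_W(s)\to0$ as $s\downarrow0$, and this is not automatic. $\Psi_W$ is convex and nonincreasing with $\Psi_W(0)=0$, which still permits a downward jump at the origin. Without right-continuity, two parts of the statement do not follow from your limit of $\Psi_W'(s)$:
\begin{itemize}
 \item the claim that $\Psi_W$ is recovered from $\Psi_W(0)=0$ by integrating \eqref{lower:eq:derivative};
 \item the reading of $\Psi_W'(0+)$ as a right derivative at the origin.
\end{itemize}
The paper proves this step explicitly. In your setting it takes one line: use $\frac{\dd}{\dd t}\mathcal G(t,x_*(t))=\log\frac{H(v_*)}{H(t^2)}-2P(P-t)h(v_*)$, with $P=p_C+t\to0$ and $v_*=v_C\to t^2$ uniformly as $C\downarrow1$. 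Integrating from $t=0$ gives $\mathcal G_s(W/2,0)\to0$.

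\textbf{Two incorrect side remarks.} Your final computation does not use them, but they should be fixed.
\begin{itemize}
 \item As $C\downarrow1$, the $v$ of \eqref{lower:eq:scalar} tends to $\chi^{-1}(\cos(\pi W/2))=W^2/4$, not to $0$.
 \item The term $\frac{s}{4\pi}e^{2x_*(t)}=-z_C(t)=\frac{C^2-1}{\pi^2}H(v_C(t))^2$ does tend to zero uniformly. This is exactly what your limits $p\to-t$ and $h(v)\to h(t^2)$ require.
\end{itemize}
In fact $e^{2x_*(t)}=z_C(t)/z_C(W/2)=H(v_C(t))^2/H(v_C(W/2))^2$ holds exactly. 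So your limit formula for $e^{2x_*}$ follows at once from $v_C(t)\to t^2$, without the linearized ODE. The same identity gives the uniform convergence needed to pass the limit under the integral.
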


The parameter $v$ in \eqref{lower:eq:scalar} may be negative. This is a
hyperbolic branch of real functions, not a use of a positive exponential
moment of area. The strict condition $v<1$ will control the clock
transforms in the proof.

\begin{proof}
Proposition~\ref{lower:prop:convergence} gives
$\Psi_W(s)=4\mathcal G(W/2,0)$.
To display the dependence on $s$, write $\mathcal G_s$ in this proof.
The characteristic construction depends on $s$ and $x$ through
$s e^{2x}$, and hence
$\mathcal G_s(t,x)=\mathcal G_{s e^{2x}}(t,0)$.
Therefore
\[
 \Psi_W'(s)=\frac2s\,\partial_x\mathcal G_s(W/2,0).
\]
At the endpoint $x_C(W/2)=0$,
\eqref{lower:eq:charz} gives \eqref{lower:eq:scalar}, while
$\partial_x\mathcal G_s(W/2,0)=p_C(W/2)+W/2$ gives
\eqref{lower:eq:derivative}.

The same endpoint formulas give $v_C(t)\to t^2$ at $x=0$,
uniformly for $0\le t\le W/2$, as $s\downarrow0$.
Here the derivative of $-z_C(t)$ at $C=1$ extends to the positive value
$2/\pi^2$ at $t=0$, so the implicit endpoint solution is uniform on
this compact interval.
The HJ equation then gives $\partial_t\mathcal G_s(t,0)\to0$ uniformly.
Integration from $t=0$ proves $\Psi_W(s)\to0$, as required.

For the right derivative at zero, set $t=W/2$ and let
$z_C(t)=v_C(t)-p_C(t)^2$ as in the construction.
As $s\downarrow0$, $C\downarrow1$, $v_C(t)\to t^2$ and
$p_C(t)\to-t$. Differentiating the explicit formulas at $C=1$ gives
\[
 \left.\frac{d}{dC}[-z_C(t)]\right|_{C=1}
 =\frac{2t^2}{\sin^2(\pi t)},\qquad
 \left.\frac{d}{dC}p_C(t)\right|_{C=1}
 =\frac{\sin(\pi t)\cos(\pi t)/\pi-t}{\sin^2(\pi t)}.
\]
Since $s=4\pi[-z_C(t)]$, their ratio yields
$\Psi_W'(0+)=-[\pi W-\sin(\pi W)]/(2\pi^2W^2)$.
Finally, the characteristic equation gives $p_C'(t)<-1$ and
$p_C(0)=0$, hence $p_C(W/2)<-W/2$.
The formula $\Psi_W'(s)=2[p_C(W/2)+W/2]/s$ is therefore negative.
\end{proof}

Let $\vartheta=\pi W/2$ and let $(C,v)$ solve
\eqref{lower:eq:scalar}. Define
\[
 a_W(s)=-\Psi_W'(s)
 =\frac{2C\sin\vartheta\,H(v)/\pi-W}{s},\qquad s>0.
\]
\begin{lemma}[Laplace slopes]\label{lower:lem:slopes}
The function $a_W$ decreases continuously and strictly from $a_*(W)$
to zero, and maps $(0,\infty)$ onto $(0,a_*(W))$.
\end{lemma}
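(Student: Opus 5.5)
Our plan is to obtain monotonicity from convexity, continuity from the explicit scalar parametrization, and the two endpoint limits from Theorem~\ref{thm:lower-transform} and a direct asymptotic analysis of \eqref{lower:eq:scalar} as $s\to\infty$.

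\emph{Continuity and monotonicity.} By \eqref{lower:eq:scalar}, the map $s\mapsto C$ is the inverse of a smooth, strictly monotone map; its strict monotonicity is the strict monotonicity of $C\mapsto -z_C(W/2)$ proved in Lemma~\ref{lower:lem:hj-candidate}, together with $s=4\pi[-z_C(W/2)]$. Hence $C$ and $v=\chi^{-1}(C\cos\vartheta)$ depend smoothly on $s$, and $a_W$ is continuous (indeed $C^1$) on $(0,\infty)$. For monotonicity, $\Psi_{\gamma,W}$ is convex in $s$ by Hölder's inequality, so its pointwise limit $\Psi_W$ is convex. Thus $a_W=-\Psi_W'$ is nonincreasing.

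\emph{Strictness.} We argue by contradiction. If $a_W$ were constant on an interval, then $\Psi_W$ would be affine there. The map $s\mapsto C(s)$ is real analytic, since $\chi^{-1}$ and $H$ are analytic on the relevant branch and the implicit derivative does not vanish. Hence $a_W$ is real analytic in $s$ on $(0,\infty)$, and local constancy would force $a_W$ to be constant everywhere. This contradicts the two distinct endpoint limits established below. The only point to check is that $v$ crosses $0$ analytically; this holds because $\chi$ is analytic with $\chi'\neq0$ at $v=0$.

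\emph{Endpoint limits.} As $s\downarrow0$, Theorem~\ref{thm:lower-transform} gives $a_W(0+)=a_*(W)$. As $s\to\infty$ we must show $a_W(s)\to0$, and this is the main obstacle. Write $a_W(s)=\bigl(2C\sin\vartheta H(v)/\pi-W\bigr)/s$ with $s=\tfrac4\pi(C^2-1)H(v)^2$. Then
\[
 \frac{C\sin\vartheta\,H(v)}{s}=\frac{\pi C\sin\vartheta}{4(C^2-1)H(v)},
\]
so it suffices to show $CH(v)/(C^2-1)\to0$, i.e.\ $(C^2-1)H(v)/C\to\infty$.

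We split according to $W$. If $W>1$, then $C$ increases to $-1/\cos\vartheta<\infty$ as $s\to\infty$. Since $s\to\infty$ with $C$ bounded, $H(v)\to\infty$; indeed $C\cos\vartheta\to-1$ forces $v\to1$ and $H(v)\to\infty$. Because $C^2-1$ stays bounded below, the ratio tends to $0$. If $W<1$, then $C\to\infty$ and $\cos\vartheta>0$, so $v\to-\infty$ with $\cosh(\pi\sqrt{-v})=C\cos\vartheta$. Hence $\pi\sqrt{-v}\sim\log C$ and $H(v)\sim 2\log(2C\cos\vartheta)/(2C\cos\vartheta)$, and $(C^2-1)H(v)/C\sim C H(v)\sim \log C/\cos\vartheta\to\infty$. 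If $W=1$, then $v=\chi^{-1}(0)=1/4$ is fixed and $H(v)$ is constant. In this case $a_W=(2CH/\pi-1)/s$ with $s\asymp C^2$, which again tends to $0$.

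Since $a_W$ is continuous, strictly decreasing, and has limits $a_*(W)$ and $0$ at the two ends of $(0,\infty)$, it maps $(0,\infty)$ bijectively onto $(0,a_*(W))$.
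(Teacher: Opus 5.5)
Your proof is correct and follows the paper's structure. Convexity of the limit $\Psi_W$ gives that $a_W$ is nonincreasing. Analyticity of the scalar parametrization, combined with the two distinct endpoint limits, rules out a flat piece and so gives strict decrease and the range $(0,a_*(W))$.

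The one step where you differ is the limit $s\to\infty$. You split into the cases $W>1$, $W<1$ and $W=1$ and track the asymptotics of $C$ and $H(v)$. The paper avoids this by using the endpoint relation $p^2=v+s/(4\pi)$, where $p=-C\sin\vartheta\,H(v)/\pi<-W/2$. This gives
\[
 0<a_W(s)=\bigl(2\sqrt{v+s/(4\pi)}-W\bigr)/s\le 2\sqrt{1+s/(4\pi)}/s,
\]
using only $v<1$. That bound is uniform in $W$ and yields positivity directly.

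Your case analysis is also valid. There is one slip in the reduction: the condition $CH(v)/(C^2-1)\to0$ should read $C/\bigl((C^2-1)H(v)\bigr)\to0$. That corrected condition, equivalently $(C^2-1)H(v)/C\to\infty$, is what your three cases actually verify.
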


\begin{proof}
The characteristic formulas give $p<-W/2$, $p^2=v+s/(4\pi)$, and
$p=-C\sin\vartheta H(v)/\pi$. Consequently
\[
 0<a_W(s)=\frac{2\sqrt{v+s/(4\pi)}-W}{s}
 \le\frac{2\sqrt{1+s/(4\pi)}}s\longrightarrow0.
\]
The right derivative at zero gives $a_W(0+)=a_*(W)$.
The functions $\Psi_{\gamma,W}$ are convex, so their finite limit
$\Psi_W$ is convex. The scalar construction gives analyticity on
$(0,\infty)$. Hence $a_W$ is strictly decreasing: equality of its
values at two distinct points would make the analytic derivative
constant on an interval and then everywhere, contradicting these
endpoint limits. It follows that $a_W$ maps $(0,\infty)$ bijectively
onto $(0,a_*(W))$.

\end{proof}

\section{Removing the integer-block restriction}\label{sec:all-gamma}

We now allow arbitrary $\gamma\downarrow0$, with
$\theta=\pi\gamma^2/4$. The comparison uses $N$ and $N+1$ ordinary
blocks at the same value of $\gamma$. Their total weights approach
$W$, so their transforms have the same limits. We first justify this
moving-endpoint version of the convergence results, and then control
the boundary reweighting in the welding comparison.

For $0<w<2$, let $\mathbb{P}_{\gamma,w}$ be the normalized law of
$\mathcal M(w;1,\bullet)$, and let $A$ and $R$ denote its area and
right boundary length. The normalization is finite for sufficiently
small $\gamma$. We also use this notation for residual weights in
$(0,\gamma^2/2)$; finiteness in that case follows from
\cite[Lemma~2.18]{ConfWeldDisks}. Expectations are denoted by
$\E_{\gamma,w}$.

\subsection{Integer blocks with a moving terminal weight}

Consider any sequence of integers $N$ with
$N\gamma^2/2\to W\in(0,2)$.

\begin{lemma}[Moving endpoints]\label{lem:moving-endpoints}
For $0<q<2$ and $s\ge0$,
\[
 \gamma^2\log\E_{\gamma,N\gamma^2/2}[A^{q/\gamma^2}]
 \longrightarrow\Lambda_W(q),\qquad
 \gamma^2\log\E_{\gamma,N\gamma^2/2}[e^{-sA/\gamma^2}]
 \longrightarrow\Psi_W(s).
\]
\end{lemma}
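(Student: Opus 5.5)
The plan is to rerun the proofs of Sections~\ref{sec:convergence} and~\ref{lower:sec:convergence} with the terminal time $T_\gamma:=N\theta/\pi=N\gamma^2/4$ in place of $W/2$. Then we use continuity of the limiting objects in the terminal time.

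\emph{Block representation.} First I will observe that Proposition~\ref{prop:block}, the recursion of Lemma~\ref{lem:partial-area-recursion}, the exact moment recurrence \eqref{upper:eq:exact-recursion} and the Brownian-clock recursion of Proposition~\ref{lower:prop:exact-recursion} never used $N=2W/\gamma^2$. They only used that $N$ is a positive integer with $N\theta/\pi<1$. For the law $\mathbb P_{\gamma,N\gamma^2/2}$ these therefore hold verbatim with weight $w_N=N\gamma^2/2$. Since $T_\gamma\to W/2<1$, we may fix $W/2<T<1$ and $q/4<R<1/2$ with $T_\gamma\le T$ for small $\gamma$.

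\emph{Moments.} The comparison arrays $U^\pm_{m,j}$ in Proposition~\ref{upper:prop:hj-convergence} are built on the fixed rectangle $[\eta,T]\times[\eta,R]$. Lemma~\ref{upper:lem:hj-consistency} and Lemma~\ref{upper:lem:hj-boundary-estimates} are uniform over $1\le m\le N$ for any such $N$. The induction therefore gives
\[
\Bigl|\tfrac{\gamma^2}{4}\log\widetilde Q_{N,n}-G(T_\gamma+\eta,u_n+\eta)\Bigr|\le\beta_\eta+T\,\omega_B(\eta)+o(1),
\qquad u_n=n\theta/\pi.
\]
By uniform continuity of $G$ on compacts (Lemmas~\ref{upper:lem:hj-candidate} and~\ref{upper:lem:hj-boundary-values}), $G(T_\gamma+\eta,u_n+\eta)\to G(W/2+\eta,q/4+\eta)$, and we then let $\eta\downarrow0$. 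In Lemma~\ref{upper:lem:transform}, the Riemann sum $\gamma^2\sum_{j<N}\log Z(j\theta/\pi)$ still tends to $4\int_0^{W/2}\log Z$ because its upper limit $T_\gamma\to W/2$. The interpolation between $n_\pm$ is unchanged.

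\emph{Laplace transform.} For the upper bound, Lemma~\ref{lower:lem:consistency} is uniform in $n\le N$ and $t_n\le T$. One only needs $\mathcal G$ smooth with a strict domain margin on $[0,T]\times[-M,M]$, which Lemma~\ref{lower:lem:hj-candidate} gives. The exit bound Lemma~\ref{lower:lem:exit} likewise only needs $T+p<1$. This yields $\limsup(\gamma^2\log f_N(0)-4\mathcal G(T_\gamma,0))\le0$; note that \eqref{lower:eq:upper} is already stated in this form. For the lower bound, the target characteristic is chosen to end at $x_*(T_\gamma)=0$, so it now depends on $\gamma$. Its parameter $C_\gamma$ converges to the parameter for $W/2$, by the monotone endpoint map in Lemma~\ref{lower:lem:hj-candidate}. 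Consequently $x_*,P,v_*$ and their derivatives are bounded, and $v_*\le1-\delta$, uniformly in $\gamma$. Lemmas~\ref{lower:lem:tilted-law}--\ref{lower:lem:likelihood} then go through with constants uniform in $\gamma$, and give $\tfrac{\gamma^2}{4}\log Z_\gamma-\mathcal G(T_\gamma,0)\to0$. Continuity of $t\mapsto\mathcal G(t,0)$ finishes the argument.

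\emph{Main obstacle.} The hardest point is making every error term uniform when the terminal point moves. This matters most in the lower-bound change of measure, where the target characteristic itself varies with $\gamma$. There I will verify carefully that the Riemann-sum and Taylor errors are controlled by uniform $C^2$ bounds on the family of characteristics indexed by $C$ in a compact set. This uniformity follows from smooth dependence on $C$.
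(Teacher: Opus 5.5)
Your proposal is correct and follows essentially the same route as the paper. Both proofs fix a terminal-time bound strictly between $W/2$ and $1$ and rerun the moment and Laplace comparisons uniformly in the moving terminal time $N\theta/\pi$. For the lower bound both take the characteristic ending at $(N\theta/\pi,0)$, and both conclude by continuity of $G$ and $\mathcal G$ in the endpoint.
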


\begin{proof}
The exact recursions depend on $N$ only through the number of steps
and the displayed block biases. Their terminal time is
$N\theta/\pi\to W/2$. Choose a fixed terminal-time upper bound
strictly between $W/2$ and $1$.

For the moment comparison, all admissible arguments remain in a fixed
compact subinterval of $(-1,1)$, with the moment endpoint near $q/4$.
The weight bounds, shifted test functions, and boundary estimates in
Section~\ref{sec:convergence} therefore apply uniformly. The same
induction as in Proposition~\ref{upper:prop:hj-convergence} evaluates
the test function at $(N\theta/\pi,\gamma^2 n/4)$, which converges to
$(W/2,q/4)$. Stirling's formula, the normalization Riemann sum, and
the interpolation to real powers are unchanged.

For the Laplace upper bound, the compact-interval comparison and
exit estimates are uniform for terminal times below the chosen bound.
For the lower bound, choose the characteristic ending at
$(N\theta/\pi,0)$. Smooth dependence on this endpoint puts these
characteristics in a common compact spatial interval, with a common
strict margin below $1$ for the argument of $H$. Thus the clock
moment bounds, variance estimates, and likelihood Riemann sums in
Section~\ref{lower:sec:convergence} apply uniformly. They yield
$4\mathcal G(N\theta/\pi,0)\to4\mathcal G(W/2,0)$.
The case $s=0$ is immediate.
\end{proof}

\subsection{Boundary moments and welding}

Write
$B_{\gamma,w}(p)=\E_{\gamma,w}[R^p]$, allowing the value $+\infty$.
Let $J_{\gamma,w}(x,y)$ be the total mass of
$\mathcal M(w;x,y)$. Left-right symmetry and quantum scaling give
\[
 J_{\gamma,w}(x,y)=J_{\gamma,w}(y,x),\qquad
 J_{\gamma,w}(cx,cy)
 =c^{-1-2w/\gamma^2}J_{\gamma,w}(x,y).
\]
The symmetry follows from reflection of the strip construction of a
thick disk and, for thin disks, reflection of each component of the
Poissonian chain. The scaling follows from
\cite[Lemma~2.19]{ConfWeldDisks}, after disintegration over both lengths.

\begin{lemma}[Boundary identities]\label{lem:boundary-identities}
The boundary moments satisfy
\begin{equation}\label{eq:boundary-reflection}
 B_{\gamma,w}(p)=B_{\gamma,w}(2w/\gamma^2-1-p).
\end{equation}
If $w,r>0$ and $w+r<2$, then
\begin{equation}\label{eq:boundary-welding-moments}
 B_{\gamma,w+r}(p)
 =\frac{B_{\gamma,w}(p-2r/\gamma^2)}
        {B_{\gamma,w}(-2r/\gamma^2)}B_{\gamma,r}(p).
\end{equation}
The denominator in \eqref{eq:boundary-welding-moments} is finite
and positive.
\end{lemma}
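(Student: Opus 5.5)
The plan is to write both identities in terms of the mass function $J_{\gamma,w}$ and then use only its symmetry, its scaling law, and the welding identity \eqref{eq:welding-input} in the two-block form. By definition of $\mathbb P_{\gamma,w}$,
\[
 B_{\gamma,w}(p)=\frac{\int_0^\infty y^pJ_{\gamma,w}(1,y)\,\dd y}{\int_0^\infty J_{\gamma,w}(1,y)\,\dd y}.
\]
For the reflection identity \eqref{eq:boundary-reflection}, put $\kappa=1+2w/\gamma^2$. Scaling with $c=1/y$ and symmetry give
\[
 J_{\gamma,w}(1,y)=y^{-\kappa}J_{\gamma,w}(1/y,1)=y^{-\kappa}J_{\gamma,w}(1,1/y).
\]
After the substitution $y\mapsto1/y$, the numerator becomes $\int y^{\kappa-2-p}J_{\gamma,w}(1,y)\,\dd y$. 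Taking $p=0$ shows the denominator is unchanged, and $\kappa-2-p=2w/\gamma^2-1-p$. Both sides lie in $[0,\infty]$ by Tonelli, so the identity holds even when they are infinite.

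For the welding identity \eqref{eq:boundary-welding-moments}, I would use the welding of a weight-$w$ disk with a weight-$r$ disk into a weight-$(w+r)$ disk. This is the general form of \cite[Theorem~2.3]{ConfWeldDisks} behind \eqref{eq:welding-input}. At the level of masses it reads
\[
 J_{\gamma,w+r}(1,y)=c\int_0^\infty J_{\gamma,w}(1,x)J_{\gamma,r}(x,y)\,\dd x.
\]
By scaling, $J_{\gamma,r}(x,y)=x^{-1-2r/\gamma^2}J_{\gamma,r}(1,y/x)$. Multiply by $y^p$, integrate in $y$, and substitute $y=xy'$ with Tonelli. This gives
\[
 \int_0^\infty y^pJ_{\gamma,w+r}(1,y)\,\dd y
 =c\int_0^\infty x^{p-2r/\gamma^2}J_{\gamma,w}(1,x)\,\dd x\int_0^\infty y'^pJ_{\gamma,r}(1,y')\,\dd y'.
\]
Dividing the general-$p$ equation by the $p=0$ equation cancels $c$ and the normalizers of $J_{\gamma,w+r}$ and $J_{\gamma,r}$. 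The $x$-integrals then become $B_{\gamma,w}(p-2r/\gamma^2)$ over $B_{\gamma,w}(-2r/\gamma^2)$, once the normalizer of $J_{\gamma,w}$ is cancelled from both.

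The step I expect to need the most care is finiteness and positivity of the denominator $B_{\gamma,w}(-2r/\gamma^2)$; without it, the division by the $p=0$ case is not legitimate. Positivity is automatic because $J_{\gamma,w}(1,\cdot)$ is a nonzero positive function. For finiteness, the $p=0$ equation expresses the finite, positive total mass of $\mathcal M(w+r;1,\bullet)$ as $c$ times the product of two nonnegative integrals. The $y'$-integral is positive, being a nonzero integral of a positive function. Hence the $x$-integral must be finite, since otherwise the product would be infinite.

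The total mass of $\mathcal M(w+r;1,\bullet)$ is finite because $w+r<2$; this is the finiteness noted at the start of this section, citing \cite[Lemma~2.16]{ConfWeldDisks} and \cite[Lemma~2.18]{ConfWeldDisks}. The same argument shows that the normalizers of $J_{\gamma,w}$ and $J_{\gamma,r}$ are finite, so all three laws are well defined. Finally, the resulting identity should be read in $[0,\infty]$, with both sides infinite simultaneously.
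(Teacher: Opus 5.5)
Your proposal is correct and follows the paper's proof: reflection comes from $y\mapsto 1/y$ together with the symmetry and homogeneity of $J_{\gamma,w}$, and the welding identity comes from factorizing the two-disk welding. Your convolution of mass densities with the substitution $y=xy'$ is exactly the paper's observation that the left disk is $\mathbb P_{\gamma,w}$ reweighted by $R^{-2r/\gamma^2}$ and the rescaled right disk is an independent $\mathbb P_{\gamma,r}$ sample; finiteness and positivity of the denominator follow, as in the paper, from the finite positive mass of $\mathcal M(w+r;1,\bullet)$ in the $p=0$ identity.
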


\begin{proof}
Substitute $y=1/x$ in the integral of $y^pJ_{\gamma,w}(1,y)$
and use symmetry and homogeneity. Dividing by its integral at $p=0$
gives \eqref{eq:boundary-reflection}, including infinite moments.
In particular, $B_{\gamma,w}(0)=B_{\gamma,w}(2w/\gamma^2-1)=1$.

For the second identity, weld a weight-$w$ disk on the left of a
weight-$r$ disk and integrate over the final right boundary length.
If the interface length is $R$, the mass of the right disk is a
constant times $R^{-2r/\gamma^2}$, by
\cite[Lemmas~2.16 and~2.18]{ConfWeldDisks}. Hence the left disk has
law $\mathbb{P}_{\gamma,w}$ reweighted by $R^{-2r/\gamma^2}$.
The normalization is finite and positive because the welded disk
has finite positive mass with left boundary length one.
After rescaling the right disk to left boundary length one, its law
is $\mathbb{P}_{\gamma,r}$, independently of the left disk. The final right
boundary length is the product of the two boundary ratios.
Taking its $p$th moment proves \eqref{eq:boundary-welding-moments}
by Tonelli's theorem.
\end{proof}

Fix a compact interval $K\subset(0,2)$. All uniform bounds below are
for $w\in K$ and sufficiently small $\gamma$.

\begin{lemma}[Uniform boundary moments]\label{lem:uniform-boundary-moments}
For every fixed $p>0$, there is a finite constant $C_{K,p}$ such that
\[
 \E_{\gamma,w}[R^p]+\E_{\gamma,w}[R^{-p}]\le C_{K,p}.
\]
\end{lemma}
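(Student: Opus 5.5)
The plan is to reduce the bounds to the exact one-block boundary law in Proposition~\ref{prop:block}, combined with the welding identity \eqref{eq:boundary-welding-moments} and the reflection identity \eqref{eq:boundary-reflection}. First I treat block weights: for $w=N\gamma^2/2$ with $N$ an integer, the right length is $R=\prod_{k=1}^N\ell_k$ with independent factors. By the $d=0$ case of Lemma~\ref{lem:mixed-block-moments}, $\E[\ell_k^{p}]=Z(b_k+p\theta/\pi)/Z(b_k)$ with $b_k=-(N-k)\theta/\pi$. Hence
\[
 \log\E_{\gamma,N\gamma^2/2}[R^{\pm p}]=\sum_{k=1}^N\bigl[\log Z(b_k\pm p\gamma^2/4)-\log Z(b_k)\bigr].
\]
Each summand is $O(\gamma^2)$ uniformly, because $b_k$ ranges in $[-w/2,0]$, a compact subinterval of $(-1,1)$ once $\sup K<2$, and $\log Z$ is smooth there. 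There are $N=O(\gamma^{-2})$ terms, so the sum is bounded. In the limit it is essentially $4\int_0^{w/2}[\log Z(-t\pm p\gamma^2/4)-\log Z(-t)]\,dt/\gamma^2\cdot\gamma^2/4$. This gives a bound uniform over integer block counts with $N\gamma^2/2\in K'$, where $K'$ is a slightly enlarged compact interval.

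To pass to a general $w\in K$, I write $w=N\gamma^2/2+r$ with $N=\lfloor 2w/\gamma^2\rfloor$ and residual weight $r\in[0,\gamma^2/2)$. Identity \eqref{eq:boundary-welding-moments}, with the integer part on the left, gives
\[
 B_{\gamma,w}(\pm p)=\frac{B_{\gamma,N\gamma^2/2}(\pm p-2r/\gamma^2)}{B_{\gamma,N\gamma^2/2}(-2r/\gamma^2)}\,B_{\gamma,r}(\pm p).
\]
Since $2r/\gamma^2\in[0,1)$, the first ratio involves exponents in a fixed compact range. The numerator is bounded by the previous paragraph. The denominator is bounded below: by Jensen, $B(-c)\ge \exp(-c\,\E\log R)$, and $\E\log R=\sum\E\log\ell_k$ is bounded by the same Riemann-sum argument applied to the derivative of $\log Z$. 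Alternatively I can use Hölder/log-convexity of $p\mapsto B(p)$ together with $B(0)=1$.

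The remaining factor is $B_{\gamma,r}(\pm p)$ for the thin residual disk, and this is the main obstacle. A thin disk with $r<\gamma^2/2$ has $R$ with heavy tails, and only $p$ in a window of length $2r/\gamma^2-1$ around the reflection point is guaranteed finite. A fixed $p>0$ may therefore lie outside the range, so the direct decomposition can fail. I would circumvent this by choosing the decomposition differently. Instead of isolating a thin residual, I weld the weight-$w$ disk with a thin residual $r'=(N+1)\gamma^2/2-w\in(0,\gamma^2/2]$ on the right to obtain $N+1$ ordinary blocks, and solve \eqref{eq:boundary-welding-moments} for the left factor:
\[
 B_{\gamma,w}(p-2r'/\gamma^2)=B_{\gamma,w}(-2r'/\gamma^2)\,\frac{B_{\gamma,(N+1)\gamma^2/2}(p)}{B_{\gamma,r'}(p)}.
\]
Here a lower bound on $B_{\gamma,r'}(p)$ suffices. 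Since $B_{\gamma,r'}(0)=1$ and the reflection identity gives symmetry about $r'/\gamma^2-1/2\in(-1/2,0]$, log-convexity yields $B_{\gamma,r'}(p)\ge1$ for $p\ge0$ and for $p\le 2r'/\gamma^2-1$. This covers exactly the needed signs, with $p$ replaced by $p+2r'/\gamma^2$ or by a negative exponent through reflection.

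The prefactor $B_{\gamma,w}(-2r'/\gamma^2)$ has an exponent in $(-1,0]$. I would bound it by log-convexity between $B(0)=1$ and a moderate negative moment, which is obtained by applying the same identity once more, or directly via reflection $B(-c)=B(2w/\gamma^2-1+c)$ together with Hölder's inequality. Combining the large-$p$ and negative-$p$ cases through \eqref{eq:boundary-reflection} then yields the stated uniform bound $C_{K,p}$.
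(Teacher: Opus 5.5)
Your integer-block estimate, the decision to weld the thin residual $r'=(N+1)\gamma^2/2-w$ on the right of the weight-$w$ disk, and the lower bound $B_{\gamma,r'}(p)\ge1$ for $p\ge0$ or $p\le 2r'/\gamma^2-1$ are the same ingredients as in the paper's proof. The gap is in the last step.

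Put $b=2r'/\gamma^2\in(0,1]$. Solving the welding identity only gives $B_{\gamma,w}(p)\le C\,B_{\gamma,w}(-b)$ and $B_{\gamma,w}(-k-b)\le C\,B_{\gamma,w}(-b)$ for $k\ge1$. So everything depends on an upper bound for the prefactor $B_{\gamma,w}(-b)$, and neither of your suggestions supplies one. Reflection rewrites it as $B_{\gamma,w}(2w/\gamma^2-1+b)$. That exponent lies outside $[0,2w/\gamma^2-1]$, where log-convexity together with $B_{\gamma,w}(0)=B_{\gamma,w}(2w/\gamma^2-1)=1$ yields only the lower bound $\ge1$; H\"older cannot turn this into an upper bound. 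A ``moderate negative moment obtained by applying the same identity once more'' is itself bounded only by $C\,B_{\gamma,w}(-b)$. Interpolating against it is therefore circular as you have stated it.

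The missing idea is that this circular inequality is self-improving. The prefactor $B_{\gamma,w}(-b)$ is finite, since it is the denominator in \eqref{eq:boundary-welding-moments}. Log-convexity between $0$ and $-k-b$ then gives $B_{\gamma,w}(-b)\le B_{\gamma,w}(-k-b)^{b/(k+b)}\le (C\,B_{\gamma,w}(-b))^{b/(k+b)}$. Absorbing yields $B_{\gamma,w}(-b)\le C^{b/k}$ and $B_{\gamma,w}(-k-b)\le C^{(k+b)/k}$. This absorption is what the paper does, with an integer $k\ge\max\{1,p\}$, before interpolating with the zeroth moment to bound $B_{\gamma,w}(-p)$.

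Alternatively, your own first decomposition $w=N\gamma^2/2+r$ works if you evaluate it exactly at the exponent $-b=2r/\gamma^2-1$. This gives $B_{\gamma,w}(-b)=B_{\gamma,N\gamma^2/2}(-1)\,B_{\gamma,r}(2r/\gamma^2-1)/B_{\gamma,N\gamma^2/2}(-2r/\gamma^2)\le B_{\gamma,N\gamma^2/2}(-1)$. The inequality holds because $B_{\gamma,r}(2r/\gamma^2-1)=B_{\gamma,r}(0)=1$ by reflection and $B_{\gamma,N\gamma^2/2}(-2r/\gamma^2)\ge1$. At that single exponent no upper bound on the thin disk is needed.

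For positive moments you do not need the prefactor at all. For small $\gamma$ one has $0<p<2w/\gamma^2-1$ uniformly on $K$, so log-convexity between the two zeros of $\log B_{\gamma,w}$ gives $B_{\gamma,w}(p)\le1$ directly, as in the paper.
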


\begin{proof}
For an integer-block weight $w=n\gamma^2/2$, independence of the
boundary ratios and the beta integral give
\begin{equation}\label{eq:integer-boundary-moments}
 B_{\gamma,n\gamma^2/2}(p)
 =\prod_{i=0}^{n-1}
 \frac{Z(-i\theta/\pi+p\theta/\pi)}{Z(-i\theta/\pi)}.
\end{equation}
For fixed $p$, all arguments remain in a compact subset of $(-1,1)$.
The logarithm of each factor is $O(\gamma^2)$, uniformly for weights
in a slightly larger compact interval. There are $O(\gamma^{-2})$
factors, so the product and its reciprocal are uniformly bounded.

For a noninteger weight $w$, choose $r\in(0,\gamma^2/2)$ such that
$w+r=\lceil2w/\gamma^2\rceil\gamma^2/2$, and set
$b=2r/\gamma^2\in(0,1)$. Choose a fixed integer $k\ge\max\{1,p\}$.
By \eqref{eq:boundary-reflection}, $B_{\gamma,r}(b-1)=1$.
Monotonicity of $L^p$ norms applied to $R^{-1}$ gives
$B_{\gamma,r}(-k)\ge1$. Thus
\eqref{eq:boundary-welding-moments} and the integer-block estimate imply
\[
 B_{\gamma,w}(-k-b)
 \le C B_{\gamma,w}(-b)<\infty.
\]
Here finiteness also follows from the same identity: its left-hand
side at weight $w+r$ is finite and both factors on its right are
strictly positive.

Convexity of $\log B_{\gamma,w}$ and $B_{\gamma,w}(0)=1$ give
\[
 B_{\gamma,w}(-b)
 \le B_{\gamma,w}(-k-b)^{b/(k+b)}.
\]
Combining the last two inequalities yields
$B_{\gamma,w}(-k-b)\le C^{(k+b)/k}$, and interpolation with the
zeroth moment bounds $B_{\gamma,w}(-p)$ uniformly.
Finally, for small $\gamma$,
$0<p<2w/\gamma^2-1$ uniformly on $K$.
Equation~\eqref{eq:boundary-reflection} and H\"older's inequality give
$B_{\gamma,w}(p)\le1$. This proves the claim.
\end{proof}

For $0\le b\le1$, define the boundary-tilted law by
\[
 \frac{\dd\widehat{\mathbb{P}}_{\gamma,w,b}}{\dd\mathbb{P}_{\gamma,w}}
 =D_{\gamma,w,b}:=\frac{R^{-b}}{B_{\gamma,w}(-b)}.
\]

\begin{lemma}[Removing the boundary tilt]\label{lem:remove-boundary-tilt}
For each $h>1$ there is a uniform constant $C\ge1$ such that every
nonnegative measurable $Y$ satisfies
\begin{equation}\label{eq:boundary-holder}
 C^{-1}(\E_{\gamma,w}[Y^{1/h}])^h
 \le\widehat\E_{\gamma,w,b}[Y]
 \le C(\E_{\gamma,w}[Y^h])^{1/h}.
\end{equation}
The same inequalities hold with the two laws interchanged.
\end{lemma}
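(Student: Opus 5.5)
The plan is to apply Hölder's inequality to the density $D=D_{\gamma,w,b}$ and to its reciprocal, using Lemma~\ref{lem:uniform-boundary-moments} for all the boundary moments that appear. Let $h'=h/(h-1)$ be the conjugate exponent.

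\emph{Upper bound.} Hölder's inequality gives
\[
 \widehat\E_{\gamma,w,b}[Y]=\E_{\gamma,w}[YD]\le(\E_{\gamma,w}[Y^h])^{1/h}(\E_{\gamma,w}[D^{h'}])^{1/h'}.
\]
To bound the last factor, write $\E_{\gamma,w}[D^{h'}]=B_{\gamma,w}(-bh')/B_{\gamma,w}(-b)^{h'}$.
\begin{itemize}
 \item The numerator is at most $1+\E_{\gamma,w}[R^{-h'}]\le 1+C_{K,h'}$. Here we split on $R\le1$ and $R>1$ and use $0\le b\le1$.
 \item For the denominator, Jensen's inequality (or log-convexity together with $B_{\gamma,w}(0)=1$) gives $B_{\gamma,w}(-b)\ge \exp\{-b\,\E_{\gamma,w}[\log R]\}$.
 \item The term $\E_{\gamma,w}[\log R]$ is bounded in absolute value uniformly, since $|\log R|\le R+R^{-1}$ and Lemma~\ref{lem:uniform-boundary-moments} bounds both moments.
\end{itemize}
A simpler route to the denominator is $B_{\gamma,w}(-b)\ge \E_{\gamma,w}[R^{-1}]^{-b}$-type bounds, but the Jensen route suffices. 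Either way we get a uniform lower bound $B_{\gamma,w}(-b)\ge c>0$. We also get the matching upper bound $B_{\gamma,w}(-b)\le 1+C_{K,1}$. This gives the right inequality in \eqref{eq:boundary-holder}.

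\emph{Lower bound.} We write $Y^{1/h}=(YD)^{1/h}D^{-1/h}$ and apply Hölder with exponents $h,h'$:
\[
 \E_{\gamma,w}[Y^{1/h}]\le(\E_{\gamma,w}[YD])^{1/h}(\E_{\gamma,w}[D^{-h'/h}])^{1/h'}.
\]
Then $\E_{\gamma,w}[D^{-h'/h}]=B_{\gamma,w}(-b)^{h'/h}\,\E_{\gamma,w}[R^{bh'/h}]$. The first factor is bounded by the two-sided bound on $B_{\gamma,w}(-b)$ from the upper-bound step. The second is at most $1+C_{K,h'/h}$ by Lemma~\ref{lem:uniform-boundary-moments}. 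Raising to the power $h$ gives the left inequality in \eqref{eq:boundary-holder}. The constant is uniform in $b\in[0,1]$, in $w\in K$, and in small $\gamma$.

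\emph{Interchanged laws.} Here $d\mathbb P_{\gamma,w}/d\widehat{\mathbb P}_{\gamma,w,b}=D^{-1}$. The same two Hölder steps, now carried out under $\widehat{\mathbb P}$, need uniform bounds on $\widehat\E[D^{-h'}]$ and $\widehat\E[D^{h'/h}]$. These equal $\E[D^{1-h'}]$ and $\E[D^{1+h'/h}]$, which are again ratios of boundary moments of fixed orders $p$ with $|p|\le 1+h'$. They are bounded exactly as before.

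\emph{Main obstacle.} The only real point is a uniform lower bound on the normalizer $B_{\gamma,w}(-b)$. The upper bound is immediate. The lower bound comes from Jensen applied to $\log R$, using that Lemma~\ref{lem:uniform-boundary-moments} controls $R^{\pm1}$ and hence $\E|\log R|$ uniformly.
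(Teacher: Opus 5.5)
Your proposal is correct and is essentially the paper's argument: H\"older's inequality with exponents $h$ and $h/(h-1)$, with every density moment reduced to a boundary moment of fixed order controlled by Lemma~\ref{lem:uniform-boundary-moments}. The one difference is the lower bound on the normalizer. You obtain $B_{\gamma,w}(-b)\ge e^{-C}$ from Jensen and $\E_{\gamma,w}|\log R|\le\E_{\gamma,w}[R]+\E_{\gamma,w}[R^{-1}]$. The paper instead gets the sharper $B_{\gamma,w}(-b)\ge1$ from log-convexity of $B_{\gamma,w}$, whose logarithm vanishes at $0$ and at $2w/\gamma^2-1$. Your aside $B_{\gamma,w}(-b)\ge\E_{\gamma,w}[R^{-1}]^{-b}$ is not a valid general inequality; convexity of $x\mapsto x^{-b}$ gives $\E_{\gamma,w}[R]^{-b}$ instead. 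You do not rely on it, so this does not affect the proof.
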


\begin{proof}
Lemma~\ref{lem:uniform-boundary-moments} and interpolation bound all
fixed moments of $R^b$ and $R^{-b}$ uniformly in $b\in[0,1]$.
Moreover, $B_{\gamma,w}(-b)\ge1$: the convex function
$\log B_{\gamma,w}$ vanishes at $0$ and $2w/\gamma^2-1>0$, so it
is nonnegative to the left of zero. Its upper bound follows from
the same moment estimates. Consequently, for each fixed $a>1$,
\[
 \E_{\gamma,w}[D_{\gamma,w,b}^{a}]\le C_a,
 \qquad
 \widehat\E_{\gamma,w,b}[D_{\gamma,w,b}^{-a}]
 =B_{\gamma,w}(-b)^{a-1}B_{\gamma,w}(b(a-1))\le C_a.
\]
H\"older's inequality with exponents $h$ and $h/(h-1)$ gives the
upper comparison and its version with the laws interchanged.
Applying the latter to $Y^{1/h}$ and raising to the power $h$
gives the lower comparison. Truncation covers infinite expectations.
\end{proof}

\subsection{The \texorpdfstring{$N$- and $(N+1)$-block}{N- and (N+1)-block} comparison}

Fix $W\in(0,2)$ and put
\[
 N=\left\lfloor\frac{2W}{\gamma^2}\right\rfloor,\qquad
 W_-=\frac{N\gamma^2}{2},\qquad
 W_+=\frac{(N+1)\gamma^2}{2}.
\]
If $W=W_-$, the ordinary block representation applies directly.
Otherwise set $b_-=2(W-W_-)/\gamma^2$ and
$b_+=2(W_+-W)/\gamma^2$, both in $(0,1)$.
All three weights lie in a fixed compact subset of $(0,2)$ for small
$\gamma$. The hats below specify the boundary-tilted laws just defined.

\begin{lemma}[Area sandwich]\label{lem:area-sandwich}
There are welding couplings such that
\[
 \widehat A_{\gamma,W_-,b_-}\le A_{\gamma,W},
 \qquad
 \widehat A_{\gamma,W,b_+}\le A_{\gamma,W_+}.
\]
\end{lemma}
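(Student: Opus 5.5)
Both inequalities come from a single construction: weld a residual disk of small weight on the right and use that area is additive. For the first inequality I take the weight-$W_-$ disk and weld to its right a disk of weight $W-W_-=b_-\gamma^2/2\in(0,\gamma^2/2)$. By the welding theorem \cite[Theorem~2.3]{ConfWeldDisks}, in its general form for arbitrary positive weights (it is used in this generality in the proof of Lemma~\ref{lem:boundary-identities}), the result is a weight-$W$ disk, up to a constant factor in the measure. Fix the left boundary length to one and integrate over all remaining lengths. Then, exactly as in the proof of \eqref{eq:boundary-welding-moments}:
\begin{itemize}
\item the right (residual) disk, given interface length $R$, has total mass a constant times $R^{-2(W-W_-)/\gamma^2}=R^{-b_-}$, by \cite[Lemmas~2.16 and~2.18]{ConfWeldDisks};
\item so the left component has law $\mathbb P_{\gamma,W_-}$ reweighted by $R^{-b_-}/B_{\gamma,W_-}(-b_-)$, which is precisely $\widehat{\mathbb P}_{\gamma,W_-,b_-}$;
\item the welded surface, after normalization, has law $\mathbb P_{\gamma,W}$, since the normalizing constants match by finiteness and positivity of the total mass.
\end{itemize}
On this coupling the total area $A_{\gamma,W}$ equals the area of the left component plus the nonnegative area of the residual disk. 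This gives $\widehat A_{\gamma,W_-,b_-}\le A_{\gamma,W}$.

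For the second inequality the construction is the same with the roles shifted. Weld the weight-$W$ disk on the left of a disk of weight $W_+-W=b_+\gamma^2/2$. The welded surface is a weight-$W_+$ disk. The left component now has law $\mathbb P_{\gamma,W}$ tilted by $R^{-b_+}$, which is $\widehat{\mathbb P}_{\gamma,W,b_+}$. Additivity of area again gives $\widehat A_{\gamma,W,b_+}\le A_{\gamma,W_+}$. In the degenerate case $W=W_-$ the first inequality is an equality, with $b_-=0$ and no welding.

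The main point to check carefully is the identification of the left component's marginal law. This requires the disintegration of the welding identity over the interface length, with the residual disk's mass at fixed left length $R$ given by $J_{\gamma,r}(R,\cdot)$ integrated out. By the scaling relation $J_{\gamma,r}(cx,cy)=c^{-1-2r/\gamma^2}J_{\gamma,r}(x,y)$, integrating over $y$ yields the factor $R^{-2r/\gamma^2}$, with a finite constant for a thin residual weight \cite[Lemma~2.18]{ConfWeldDisks}. Finiteness of the normalizer $B_{\gamma,w}(-b)$ follows from Lemma~\ref{lem:boundary-identities}. Everything else is the same bookkeeping already done in that lemma, so I would cite it rather than redo it.
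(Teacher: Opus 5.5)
Your proposal is correct and follows the paper's proof. You weld the residual disk of weight $W-W_-$ (resp.\ $W_+-W$) on the right, identify the left component's law as the $R^{-b_-}$- (resp.\ $R^{-b_+}$-) tilt exactly as in Lemma~\ref{lem:boundary-identities}, and conclude from additivity and nonnegativity of area, which is the paper's argument; your added check of the factor $R^{-2r/\gamma^2}$ via the scaling of $J_{\gamma,r}$ is consistent with the paper's bookkeeping.
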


\begin{proof}
Weld a weight-$(W-W_-)$ disk to the right of a weight-$W_-$ disk,
with the outer left boundary fixed at one. As in
Lemma~\ref{lem:boundary-identities}, the left component has law
$\widehat{\mathbb{P}}_{\gamma,W_-,b_-}$, whereas the complete disk has law
$\mathbb{P}_{\gamma,W}$. Its area is the sum of the two nonnegative
component areas, proving the first inequality. Welding a
weight-$(W_+-W)$ disk to the right of a weight-$W$ disk proves the
second inequality. No estimate on the residual areas is needed.
\end{proof}

\begin{proposition}[Unrestricted transform limits]\label{prop:all-gamma}
The limits in Theorem~\ref{thm:intro-transforms} hold as
$\gamma\downarrow0$ through arbitrary values, with the same
$\Lambda_W$ and $\Psi_W$.
\end{proposition}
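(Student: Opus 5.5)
The argument sandwiches $A_{\gamma,W}$ between two integer-block areas, using Lemma~\ref{lem:area-sandwich}. Lemma~\ref{lem:remove-boundary-tilt} removes the boundary tilts at a cost that disappears at scale $\gamma^2$ after sending the H\"older exponent $h\downarrow1$. Lemma~\ref{lem:moving-endpoints} identifies the limits at both ends, since $W_\pm\to W$. If $W=W_-$ along a subsequence, Lemma~\ref{lem:moving-endpoints} applies directly, so I assume $b_\pm\in(0,1)$.

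\emph{Moments.} Fix $0<q<2$ and $h>1$ close to $1$.
\begin{itemize}[leftmargin=*]
\item \emph{Lower bound.} Monotonicity and the first sandwich give $\E_{\gamma,W}[A^{q/\gamma^2}]\ge\widehat\E_{\gamma,W_-,b_-}[A^{q/\gamma^2}]$. The lower inequality of \eqref{eq:boundary-holder} with $Y=A^{q/\gamma^2}$ then gives
\[
\gamma^2\log\E_{\gamma,W}[A^{q/\gamma^2}]\ge -\gamma^2\log C+h\,\gamma^2\log\E_{\gamma,W_-}[A^{(q/h)/\gamma^2}].
\]
By Lemma~\ref{lem:moving-endpoints} the right side tends to $h\Lambda_W(q/h)$.
\item \emph{Upper bound.} Take $Y=A^{q/\gamma^2}$ with the second sandwich, using the laws interchanged on the tilted side. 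The version of \eqref{eq:boundary-holder} with the laws interchanged gives $\E_{\gamma,W}[Y]\le C(\widehat\E_{\gamma,W,b_+}[Y^h])^{1/h}$. Since $\widehat A_{\gamma,W,b_+}\le A_{\gamma,W_+}$ in the coupling, and the tilted law is exactly the marginal of the left component, we get $\widehat\E_{\gamma,W,b_+}[A^{hq/\gamma^2}]\le\E_{\gamma,W_+}[A^{hq/\gamma^2}]$. This requires $hq<2$, which holds for $h$ close to $1$. The limit is $h^{-1}\Lambda_W(hq)$.
\end{itemize}
As $h\downarrow1$, continuity of $\Lambda_W$ on $(0,2)$ closes the gap. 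Real analyticity gives that continuity.

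\emph{Laplace transforms.} The roles reverse, because $e^{-sA/\gamma^2}$ is decreasing in $A$.
\begin{itemize}[leftmargin=*]
\item \emph{Upper bound.} From $\widehat A_{\gamma,W_-,b_-}\le A_{\gamma,W}$ we get $\E_{\gamma,W}[e^{-sA/\gamma^2}]\le\widehat\E_{\gamma,W_-,b_-}[e^{-sA/\gamma^2}]$. The upper inequality of \eqref{eq:boundary-holder} bounds this by $C(\E_{\gamma,W_-}[e^{-hsA/\gamma^2}])^{1/h}$, whose limit is $h^{-1}\Psi_W(hs)$.
\item \emph{Lower bound.} From $\widehat A_{\gamma,W,b_+}\le A_{\gamma,W_+}$ we get $\E_{\gamma,W_+}[e^{-sA/\gamma^2}]\le\widehat\E_{\gamma,W,b_+}[e^{-sA/\gamma^2}]$. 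The interchanged upper inequality with $Y=e^{-sA/\gamma^2}$ bounds this by $C(\E_{\gamma,W}[e^{-hsA/\gamma^2}])^{1/h}$. Hence $\liminf\gamma^2\log\E_{\gamma,W}[e^{-hsA/\gamma^2}]\ge h\Psi_W(s)$.
\end{itemize}
Replace $s$ by $s/h$ and send $h\downarrow1$. This uses continuity of $\Psi_W$ on $[0,\infty)$: it is $C^1$ on $(0,\infty)$ by Theorem~\ref{thm:lower-transform}, and it is continuous at $0$ with value $0$. The case $s=0$ is trivial.

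The constants $C$ contribute $\gamma^2\log C\to0$, since Lemma~\ref{lem:remove-boundary-tilt} makes them uniform over the compact weight range.

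\emph{Main obstacle.} The delicate point is the coupling bookkeeping, namely checking which law appears on each side. In the first sandwich the tilted variable is on the smaller side, and in the second the untilted weight-$W$ law is tilted. To use \eqref{eq:boundary-holder} in the right direction, I must verify that the left-component marginal of the welded law is exactly $\widehat{\mathbb P}_{\gamma,w,b}$ with $b=2r/\gamma^2$. This is the statement established in the proof of Lemma~\ref{lem:boundary-identities}. I also need Lemma~\ref{lem:moving-endpoints} for the exponents $q/h$, $hq$ and $hs$, which are fixed once $h$ is fixed. The regularity statements of Theorem~\ref{thm:intro-transforms} carry over unchanged, because the limits are the same functions $\Lambda_W$ and $\Psi_W$.
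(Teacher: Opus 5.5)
Your proposal is correct and follows the paper's proof essentially step for step. It combines the two welding sandwiches with the H\"older tilt removal of Lemma~\ref{lem:remove-boundary-tilt} (untilted-to-tilted for the moment upper bound, tilted-to-untilted elsewhere), applies Lemma~\ref{lem:moving-endpoints} at $W_\pm$, and lets $h\downarrow1$ using continuity of $\Lambda_W$ and $\Psi_W$. One cosmetic slip: in the Laplace lower bound, $\widehat\E_{\gamma,W,b_+}[Y]\le C(\E_{\gamma,W}[Y^h])^{1/h}$ is the original upper inequality of \eqref{eq:boundary-holder}, not the interchanged one, but it is valid as you use it.
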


\begin{proof}
Fix $h>1$. For $0<q<2$ choose $h$ sufficiently close to one that
$hq<2$. The area sandwich and
Lemma~\ref{lem:remove-boundary-tilt} give
\[
 C^{-1}\bigl(\E_{\gamma,W_-}[A^{q/(h\gamma^2)}]\bigr)^h
 \le\E_{\gamma,W}[A^{q/\gamma^2}]
 \le C\bigl(\E_{\gamma,W_+}[A^{hq/\gamma^2}]\bigr)^{1/h}.
\]
For the upper inequality, first compare the unweighted weight-$W$
law to its boundary tilt, and then use the second welding coupling.
The constants are independent of $\gamma$. Since $W_\pm\to W$,
Lemma~\ref{lem:moving-endpoints} yields
\[
 h\Lambda_W(q/h)
 \le\liminf_{\gamma\downarrow0}\Lambda_{\gamma,W}(q)
 \le\limsup_{\gamma\downarrow0}\Lambda_{\gamma,W}(q)
 \le h^{-1}\Lambda_W(hq).
\]
Continuity and $h\downarrow1$ prove moment-transform convergence.

For the Laplace transform, the function $a\mapsto e^{-sa/\gamma^2}$
is decreasing. The same two comparisons therefore give
\[
 C^{-1}\bigl(\E_{\gamma,W_+}[e^{-sA/(h\gamma^2)}]\bigr)^h
 \le\E_{\gamma,W}[e^{-sA/\gamma^2}]
 \le C\bigl(\E_{\gamma,W_-}[e^{-hsA/\gamma^2}]\bigr)^{1/h}.
\]
For $s>0$, it follows that
\[
 h\Psi_W(s/h)
 \le\liminf_{\gamma\downarrow0}\Psi_{\gamma,W}(s)
 \le\limsup_{\gamma\downarrow0}\Psi_{\gamma,W}(s)
 \le h^{-1}\Psi_W(hs).
\]
Letting $h\downarrow1$ proves the claim; $s=0$ is immediate.
The values of $\gamma$ for which $W=W_-$ are already covered by
Lemma~\ref{lem:moving-endpoints}.
\end{proof}

\section{From transform convergence to the LDP}
\label{sec:joined-rates}\label{sec:full-ldp}

The remaining probability argument is the exponential-tilting
argument underlying the G\"artner--Ellis theorem
\cite[Section~2.3]{DemboZeitouni1998}. It is the same for both
transforms, so we record it once. The model-specific inputs are differentiability,
the ranges of the transform derivatives, and exponential tightness.
No boundary derivative of $I_W$ is needed.
By Proposition~\ref{prop:all-gamma}, all limits in this section are
unrestricted limits as $\gamma\downarrow0$.

For this paragraph, let $Y_\gamma$ be real random variables, let $J$ be
an open interval, and suppose
$L_\gamma(\lambda)=\gamma^2\log\E[e^{\lambda Y_\gamma/\gamma^2}]$
converges to a finite differentiable function $L$ on $J$.
For fixed $\lambda\in J$, define
\[
 \frac{\dd\mathbb{P}_{\gamma,\lambda}}{\dd\mathbb{P}}
 =\exp\!\left\{\frac{\lambda Y_\gamma-L_\gamma(\lambda)}{\gamma^2}\right\},
 \qquad y=L'(\lambda).
\]

\begin{lemma}[Exponential tilting]\label{lem:exponential-tilting}
Under $\mathbb{P}_{\gamma,\lambda}$, $Y_\gamma\to y$ in probability, and
\[
 \lim_{\delta\downarrow0}\liminf_{\gamma\downarrow0}
 \gamma^2\log\mathbb{P}(|Y_\gamma-y|<\delta)
 \ge L(\lambda)-\lambda y.
\]
\end{lemma}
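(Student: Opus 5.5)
The plan is to follow the standard Gärtner--Ellis lower-bound argument, using only convergence of $L_\gamma$ near $\lambda$ and differentiability of $L$ at $\lambda$.

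\emph{Step 1: concentration under the tilt.} Under $\mathbb{P}_{\gamma,\lambda}$ the scaled cumulant generating function is
\[
 \gamma^2\log\E_{\gamma,\lambda}[e^{\mu Y_\gamma/\gamma^2}]
 =L_\gamma(\lambda+\mu)-L_\gamma(\lambda),
\]
which is finite for $\lambda+\mu\in J$. Fix $\varepsilon>0$ and choose $\mu>0$ small enough that $\lambda\pm\mu\in J$. The exponential Markov inequality gives
\[
 \gamma^2\log\mathbb{P}_{\gamma,\lambda}(Y_\gamma\ge y+\varepsilon)
 \le L_\gamma(\lambda+\mu)-L_\gamma(\lambda)-\mu(y+\varepsilon).
\]
The right side converges to $L(\lambda+\mu)-L(\lambda)-\mu y-\mu\varepsilon$. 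By differentiability of $L$ at $\lambda$, this equals $-\mu\varepsilon+o(\mu)$, which is strictly negative once $\mu$ is small enough. Hence $\mathbb{P}_{\gamma,\lambda}(Y_\gamma\ge y+\varepsilon)\to0$, and in fact decays exponentially. The lower tail $\{Y_\gamma\le y-\varepsilon\}$ is handled symmetrically with $-\mu$. This proves $Y_\gamma\to y$ in probability. The only point needing care is that $J$ is open and $L$ is differentiable at $\lambda$; convexity is not needed.

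\emph{Step 2: reverse change of measure.} Fix $\delta>0$ and write $E=\{|Y_\gamma-y|<\delta\}$. Then
\[
 \mathbb{P}(E)=\E_{\gamma,\lambda}\!\left[e^{(L_\gamma(\lambda)-\lambda Y_\gamma)/\gamma^2};E\right]
 \ge e^{(L_\gamma(\lambda)-\lambda y-|\lambda|\delta)/\gamma^2}\,\mathbb{P}_{\gamma,\lambda}(E),
\]
since $-\lambda Y_\gamma\ge-\lambda y-|\lambda|\delta$ on $E$. By Step 1, $\mathbb{P}_{\gamma,\lambda}(E)\to1$, so $\gamma^2\log\mathbb{P}_{\gamma,\lambda}(E)\to0$. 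Taking $\liminf_{\gamma\downarrow0}$ gives the bound $L(\lambda)-\lambda y-|\lambda|\delta$. Letting $\delta\downarrow0$ then yields the stated inequality.

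I expect no serious obstacle. The only subtle step is Step 1, where the first-order Taylor expansion of $L$ must be combined with pointwise convergence at the two points $\lambda\pm\mu$, with $\mu$ chosen before $\gamma\downarrow0$. Pointwise convergence at finitely many points suffices for this, so no uniformity in $\lambda$ is required.
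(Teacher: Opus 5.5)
Your proposal is correct and follows the paper's proof essentially step for step. Under the tilted law you apply exponential Markov bounds at $\lambda\pm\mu$, which differentiability of $L$ at $\lambda$ makes exponentially small. You then bound the inverse density $e^{(L_\gamma(\lambda)-\lambda Y_\gamma)/\gamma^2}$ from below on $\{|Y_\gamma-y|<\delta\}$ and let $\delta\downarrow0$.
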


\begin{proof}
For small $h>0$ with $\lambda\pm h\in J$, exponential Markov bounds give
\[
 \limsup_{\gamma\downarrow0}\gamma^2
 \log\mathbb{P}_{\gamma,\lambda}(\pm(Y_\gamma-y)\ge\delta)
 \le L(\lambda\pm h)-L(\lambda)\mp hy-h\delta.
\]
Differentiability makes both bounds negative for sufficiently small $h$.
On $|Y_\gamma-y|<\delta$, the inverse density is at least
$\exp\{[L_\gamma(\lambda)-\lambda y-|\lambda|\delta]/\gamma^2\}$.
Integration over this event and then $\delta\downarrow0$ prove the claim.
\end{proof}

\begin{proof}[Proof of Theorem~\ref{thm:area-ldp}]
Proposition~\ref{prop:all-gamma} provides the unrestricted transform limits.

Apply Lemma~\ref{lem:exponential-tilting} to $Y_\gamma=\log A_\gamma$
with transform $\Lambda_{\gamma,W}$, and to $Y_\gamma=-A_\gamma$
with transform $\Psi_{\gamma,W}$.
The slope ranges proved in Lemmas~\ref{upper:lem:slopes}
and~\ref{lower:lem:slopes} ensure that every $a>a_*(W)$ is reached by
a power tilt and every $0<a<a_*(W)$ by a negative exponential tilt.
Convexity identifies the corresponding costs as the two suprema
defining $I_W$. The lemma therefore gives the required
fixed-neighborhood lower bound at every such $a$.

Markov's inequality gives the matching one-sided upper bounds
\begin{align*}
 \limsup_{\gamma\downarrow0}\gamma^2\log\mathbb{P}(A_\gamma\ge a)
 &\le-I_W(a),\qquad a>a_*(W),\\
 \limsup_{\gamma\downarrow0}\gamma^2\log\mathbb{P}(A_\gamma\le a)
 &\le-I_W(a),\qquad 0<a<a_*(W).
\end{align*}
The endpoint slopes at zero show that these costs are positive.
Hence every fixed neighborhood of $a_*(W)$ has probability tending to
one, which supplies the lower bound there as well.

The unique maximizing parameters tend to zero as $a\to a_*(W)$.
The transform formulas imply that $I_W$ is continuous on $(0,\infty)$
and vanishes at $a_*(W)$.
It is strictly increasing above $a_*(W)$ and strictly decreasing below
it: insert the positive maximizer for one value of $a$ into the
supremum for a more extreme value. This uses no derivative of $I_W$.

For completeness, the endpoint bounds needed for goodness are short.
Any fixed $q\in(0,2)$ gives
$I_W(a)\ge q\log a-\Lambda_W(q)\to\infty$ as $a\to\infty$.
The scalar Laplace formula gives
\[
 -\Psi_W'(s)
 \ge\frac{\sin(\pi W/2)}{\sqrt{\pi s}}-\frac Ws,
\]
because $s=(4/\pi)(C^2-1)H(v)^2$ and $C/\sqrt{C^2-1}\ge1$.
Integration shows $\Psi_W(s)\to-\infty$.
Now $I_W(a)\ge-sa-\Psi_W(s)$, first with $a\downarrow0$ and then
$s\to\infty$, implies $I_W(a)\to\infty$ at zero.
Thus the stated extension of $I_W$ to $\mathbb R$ is a good rate function.

For each positive $a$, the Markov bounds above give a neighborhood
upper bound by using the threshold $a-\delta$ above $a_*(W)$ and
$a+\delta$ below it. At $a_*(W)$ use the bound by one; at zero use
$I_W(\delta)\to\infty$; negative points have neighborhoods of
probability zero. A finite cover yields the upper bound on compact
sets. Finally,
\[
 \limsup_{\gamma\downarrow0}\gamma^2\log\mathbb{P}(A_\gamma>R)
 \le\Lambda_W(q)-q\log R\longrightarrow-\infty
\]
for any fixed $q\in(0,2)$, proving exponential tightness.
This extends the compact upper bound to every closed set.
For an open set, choose a positive point and a neighborhood contained
in the set and apply the lower bounds already proved.
This establishes the LDP and convergence in probability.
\end{proof}

In particular, continuity and monotonicity of $I_W$ give
\begin{equation}\label{eq:lower-tail-probability}
 \lim_{\gamma\downarrow0}\gamma^2\log\mathbb{P}(A_\gamma\le a)=-I_W(a)
 \quad(0<a<a_*(W)),
\end{equation}
and the analogous upper-tail identity for $a>a_*(W)$.
These are consequences of the LDP, not separate probability theorems.

\subsection{The elementary case \texorpdfstring{$W=1$}{W=1}}

For the moment transform, the scalar parameter is $C=1-q$ and
$z=q(2-q)/4$. For the Laplace transform, $v=1/4$ and
$C=\sqrt{1+s/\pi}$. Hence
\[
 \Lambda_1'(q)=-\log[\pi(2-q)],\qquad
 \Psi_1'(s)=\frac{1-\sqrt{1+s/\pi}}s.
\]
Integrating from zero and taking the two transforms yields
\[
 I_1(a)=2\log(2\pi a)-2+\frac1{\pi a},\qquad a>0.
\]
No finite-$\gamma$ inverse-gamma law is asserted.

\bibliographystyle{plain}
\bibliography{refs}
\end{document}